\newtheorem{theorem}{Theorem}[section]
\newtheorem{lemma}[theorem]{Lemma}
\newtheorem{corollary}[theorem]{Corollary}
\theoremstyle{definition}
\newtheorem{example}[theorem]{Example}
\theoremstyle{remark}
\newtheorem{remark}[theorem]{Remark}
\numberwithin{equation}{section}
\begin{document}

\title[The quantization for in-homogeneous self-similar measures]{The quantization for in-homogeneous self-similar measures with in-homogeneous open set condition}
\author{Sanguo Zhu}
\address{School of Mathematics and Physics, Jiangsu University of Technology,
Changzhou 213001, China.} \email{sgzhu@jsut.edu.cn}
\thanks{The author is supported by China Scholarship Council (No. 201308320049).}
\subjclass[2000]{Primary 28A80, 28A78; Secondary 94A15}
\keywords{in-homogeneous self-similar measures, in-homogeneous open set condition, quantization coefficient, quantization dimension.}

\begin{abstract}
Let $(g_i)_{i=1}^M$ be a family of contractive similitudes satisfying the open set condition. Let $\nu$ be a self-similar measure associated with $(g_i)_{i=1}^M$. We study the quantization problem for the in-homogeneous self-similar measure $\mu$ associated with a condensation system $((f_i)_{i=1}^N,(p_i)_{i=0}^N,\nu)$. Assuming a version of in-homogeneous open set condition for this system, we prove the existence of the quantization dimension for $\mu$ of order $r\in(0,\infty)$ and determine its exact value $\xi_r$. We give sufficient conditions for the $\xi_r$-dimensional upper and lower quantization coefficient to be positive or finite.
\end{abstract}

\maketitle

\section{Introduction}

In this paper, we further study the quantization problem for in-homogeneous self-similar measures. This problem has a deep background in information theory and engineering technology \cite{GN:98}. Mathematically, people are concerned with the asymptotic error (quantization error) in the approximation of a given probability measure with discrete ones in the sense of $L_r$-metrics. We refer to \cite{GL:00} for mathematical foundations of the quantization problem and \cite{Las:06,Olsen:07,Olsen:08} for recent results on in-homogeneous self-similar measures. Next, let us recall some definitions and known results.

Let $(f_i)_{i=1}^N$ be a family of contractive similitudes on
$\mathbb{R}^q$ with contraction ratios $(s_i)_{i=1}^N$.
According to \cite{Hu:81}, there exists a
unique non-empty compact subset $E$ of $\mathbb{R}^q$ such that
$E=f_1(E)\cup f_2(E)\cup\cdots\cup f_N(E)$.
The set $E$ is called the self-similar set associated with $(f_i)_{i=1}^N$. Given a probability vector $(q_i)_{i=1}^N$, there exists a unique probability measure $P$ satisfying $P=\sum_{i=1}q_i P\circ f_i^{-1}$. This measure is called the self-similar measure associated with $(f_i)_{i=1}^N$ and $(q_i)_{i=1}^N$. One may see \cite{Hu:81} for more information on self-similar sets and measures. We say that $(f_i)_{i=1}^N$ satisfies the open set condition (OSC) if there exists a bounded non-empty open
set $U$ such that $f_i(U)\cap f_j(U)=\emptyset$ for all $i\neq j$
and $f_i(U)\subset U$ for all $i=1,\cdots,N$.

Let $\nu$ be a Borel probability measure on $\mathbb{R}^q$ with compact support $C$. Let $(p_i)_{i=0}^N$ be a probability vector with $p_i>0$ for all $0\leq i\leq N$. Then by \cite{Bar:88,Las:06}, there exist a unique non-empty compact set $K$ and a unique
Borel probability measure $\mu$ supported on $K$ satisfying
\begin{equation}\label{attractingmeasure}
K=C\cup\bigg(\bigcup_{i=1}^Nf_i(K)\bigg),\;\;\mu=p_0\nu+\sum_{i=1}^Np_i\mu\circ f_i^{-1}.
\end{equation}
Following \cite{Bar:88,Las:06}, we call $((f_i)_{i=1}^N,(p_i)_{i=0}^N,\nu)$ a condensation system. In \cite{Olsen:08}, the measure $\mu$ is called an in-homogeneous self-similar measure with some interesting interpretations for this term. Assuming a version of the open set condition, Olsen and Snigireva studied the Multifractal spectra for such measures $\mu$ (see \cite[Theorem 1.7]{Olsen:08}). A series of basic properties are presented in there on the behavior of the measures $\mu$. Some of these properties will be frequently used in the proof of our preliminary results. In \cite{Zhu:08}, the author has given a characterization for the upper and lower quantization dimension of a class of ISMs, where $\nu$ is a self-similar measure associated with $(f_i)_{i=1}^N$. In there, the existence of the quantization dimension was not proved and the positivity and finiteness of the quantization coefficient have not been examined.

Next, we recall some important objects and known results in quantization theory. One may see \cite{GL:00,GL:01,Kr:08,LM:02,PG:98,PK:01,Za:82} for more information.

For every $n\in\mathbb{N}$, we write $\mathcal{D}_n:=\{\alpha\subset\mathbb{R}^q:1\leq{\rm card}(\alpha)\leq n\}$.
Let $P$ be a Borel probability measure on $\mathbb{R}^q$ and let $r\in(0,\infty)$. The $n$th
quantization error for $P$ of order $r$ is given by \cite{GL:00}
\begin{eqnarray}\label{quanerror}
e_{n,r}(P):=\inf_{\alpha\in\mathcal{D}_{n}}\bigg(\int d(x,\alpha)^{r}dP(x)\bigg)^{\frac{1}{r}}.
\end{eqnarray}
According to \cite{GL:00}, $e_{n,r}(P)$ equals the error in the approximation of $P$ with discrete probability measures supported on at most $n$ points, in the sense of $L_r$-metrics. As a natural characterization of the asymptotic quantization error, the upper and lower quantization dimension for $P$ of order $r$ are defined by
\begin{equation*}
\overline{D}_{r}(P):=\limsup_{n\to\infty}\frac{\log n}{-\log
e_{n,r}(P)};\;\;\underline{D}_{r}(P):=\liminf_{n\to\infty}\frac{\log
n}{-\log e_{n,r}(P)}.
\end{equation*}
If $\overline{D}_{r}(P)=\underline{D}_{r}(P)$, we call the common value the
quantization dimension of $P$ of order $r$ and denote it by $D_r(P)$. In order to obtain more accurate information about the asymptotic quantization error, we are further concerned with the $s$-dimensional upper and lower quantization coefficient (cf. \cite{GL:00,PK:01}):
\begin{eqnarray*}
\overline{Q}_r^s(P):=\limsup_{n\to\infty}n^{\frac{1}{s}}e_{n,r}(P),\;\;
\underline{Q}_r^s(P):=\liminf_{n\to\infty}n^{\frac{1}{s}}e_{n,r}(P),\;\;s>0.
\end{eqnarray*}
By \cite[Proposition 11.3]{GL:00} (see also \cite{PK:01}), the upper (lower) quantization dimension is exactly the critical point at which the upper (lower) quantization coefficient jumps from zero to infinity. Next, let us recall a result of Graf and Luschgy \cite{GL:01}.

Assume that $(f_i)_{i=1}^N$ satisfies the OSC. Let $P$ be the self-similar measure associated with $(f_i)_{i=1}^N$ and a probability vector $(q_i)_{i=1}^N$. Let $k_r$ be the unique solution of the equation $\sum_{i=1}^N(q_is_i^r)^{\frac{k_r}{k_r+r}}=1$. Then
\begin{equation*}
D_r(P)=k_r,\;\;0<\underline{Q}_r^{k_r}(P)\leq\overline{Q}_r^{k_r}(P)<\infty.
\end{equation*}

This result often provides us with significant insight into the study of the quantization problem for non-self-similar probability measures (cf. Theorem \ref{mthm1}).

Before we state our main result of the paper, we need to explain a version of in-homogeneous open set condition (IOSC).
Let ${\rm cl}(A),\partial(A)$ and ${\rm int}(A)$ respectively denote the closure, boundary and interior in $\mathbb{R}^q$ of a set $A$. We will assume the following IOSC: there is a bounded non-empty open set $U$ such that

(A1) $f_i(U)\subset U$ for all $1\leq i\leq N$;

(A2) $f_i(U),1\leq i\leq N$, are pairwise disjoint;

(A3) $E\cap U\neq\emptyset$ and $C\subset U$; where $E$ is the self-similar set associated with $(f_i)_{i=1}^N$;

(A4) $\nu(\partial(U))=0$; $C\cap f_i({\rm cl}(U))=\emptyset$ for all $1\leq i\leq N$.
\begin{remark}
(R1) The IOSC that we assume above is a modified version of the IOSC proposed in \cite[pp.1797]{Olsen:08} by Olsen and Snigireva.
 As the dimensions of a null set do not play any roles in the study of the quantization problem, we have dropped the rest conditions in there. However, to get a better estimate for quantization error of $\mu$, we add the assumption that $f_i({\rm cl}(U))\cap C=\emptyset,1\leq i\leq N$.

(R2) when $M=N$ and $(f_i)_{i=1}^N$ coincides with $(g_i)_{i=1}^M$, we have, $K=E$ and the ISOC is violated in an extreme manner. The mass distribution of $\mu$ is completely different from that of the ISMs considered in the present paper (see Lemma \ref{g1} and Lemma 2 of \cite{Zhu:12}). In fact, one can see that, for the ISMs in \cite{Zhu:12}, the topological supports are simpler, but the mass distributions are more convoluted; while for the ISMs with ISOC, the mass distributions are easier to handle, but  the topological supports are much more complicated. In view of this major difference, the analysis in the following sections is not applicable to the ISMs as considered in \cite{Zhu:12}.
\end{remark}

As our main result of the paper, we will prove that

\begin{theorem}\label{mthm1}
Let $\nu$ be the self-similar measure associated with $(g_i)_{i=1}^M$ satisfying the OSC and a probability vector $(t_i)_{i=1}^M$.  Assume that $((f_i)_{i=1}^N,(p_i)_{i=0}^N,\nu)$ satisfies the IOSC. Then for an ISM $\mu$ as defined in (\ref{attractingmeasure}), $D_r(\mu)$ exists and equals $\xi_r:=\max\{s_r,t_r\}$ and $\underline{Q}_r^{\xi_r}(\mu)>0$, where
$s_r,t_r$ are given by
\begin{equation*}
\sum_{i=1}^M(t_ic_i^r)^{\frac{s_r}{s_r+r}}=1;\;\;\sum_{i=1}^N(p_is_i^r)^{\frac{t_r}{t_r+r}}=1.
\end{equation*}
Moreover, if $s_r>t_r$, then $\overline{Q}_r^{\xi_r}(\mu)<\infty$;
if $s_r=t_r$, then $\underline{Q}^{\xi_r}_r(\mu)=\infty$.
\end{theorem}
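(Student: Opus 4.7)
The overall plan is to exploit the iterative structure of $\mu$ in order to reduce the quantization analysis of $\mu$ to the Graf--Luschgy theorem for the self-similar measure $\nu$. Iterating the defining equation gives, for each $k\geq 1$,
\[
\mu=p_0\sum_{j=0}^{k-1}\sum_{|\sigma|=j}p_\sigma\,\nu\circ f_\sigma^{-1}+\sum_{|\sigma|=k}p_\sigma\,\mu\circ f_\sigma^{-1},
\]
where $\sigma$ ranges over words in $\{1,\dots,N\}^j$ with $p_\sigma=p_{\sigma_1}\cdots p_{\sigma_j}$, $s_\sigma=s_{\sigma_1}\cdots s_{\sigma_j}$, and $f_\sigma=f_{\sigma_1}\circ\cdots\circ f_{\sigma_j}$. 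Conditions (A1), (A2) and (A4) of the IOSC place the supports of these scaled copies of $\nu$ inside the pairwise disjoint sets $f_\sigma({\rm cl}(U))$, so the decomposition is effectively orthogonal; I would record this as a preliminary lemma in the spirit of \cite{Olsen:08}.

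For the upper bound on $e_{n,r}(\mu)$ I would allocate $n_\sigma$ quantization points to each $\nu$-piece with $|\sigma|<k$ and a single point to each residual $\mu$-piece with $|\sigma|=k$. Combining $e_{n,r}(\nu)^r\lesssim n^{-r/s_r}$ with the disjointness yields
\[
e_{n,r}(\mu)^r\lesssim\sum_{j=0}^{k-1}\sum_{|\sigma|=j}p_\sigma s_\sigma^r\,n_\sigma^{-r/s_r}+|K|^r\sum_{|\sigma|=k}p_\sigma s_\sigma^r.
\]
When $s_r>t_r$, the optimal allocation $n_\sigma\propto(p_\sigma s_\sigma^r)^{s_r/(s_r+r)}$ makes the first sum proportional to the convergent series $\sum_\sigma(p_\sigma s_\sigma^r)^{s_r/(s_r+r)}$, and letting $k\to\infty$ gives $\overline{Q}_r^{s_r}(\mu)<\infty$. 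When $s_r<t_r$, the effective exponent is $t_r$, and I would tune the allocation to the defining equation $\sum(p_is_i^r)^{t_r/(t_r+r)}=1$ (or equivalently use a Moran-type antichain in $\{1,\dots,N\}^*$ indexed by $t_r$) to recover a self-similar Graf--Luschgy bound at dimension $t_r$. When $s_r=t_r$, the relevant sum grows like $k$, producing a logarithmic blow-up and hence $\underline{Q}_r^{\xi_r}(\mu)=\infty$.

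For the lower bound I would first exploit $\mu\geq p_0\nu$, which yields $\underline{Q}_r^{s_r}(\mu)\geq p_0^{1/r}\underline{Q}_r^{s_r}(\nu)>0$ by Graf--Luschgy and settles positivity whenever $\xi_r=s_r$. In the regime $\xi_r=t_r>s_r$ I would follow Graf--Luschgy's combinatorial strategy: take a Moran-type antichain $\Gamma_\ell\subset\{1,\dots,N\}^*$ with the $s_\sigma$ comparable to a fixed threshold, exploit the disjointness of the $f_\sigma(U)$ together with the pointwise mass bound $\mu|_{f_\sigma(U)}\geq p_0p_\sigma\,\nu\circ f_\sigma^{-1}$ furnished by the decomposition, and transfer the standard counting argument through the defining equation for $t_r$.

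The principal difficulty I foresee is exactly this last step: although $\mu$ is not self-similar under $(f_i)_{i=1}^N$, one has to extract a self-similar-type combinatorial lower bound. The key device should be the pointwise inequality above, which lets Graf--Luschgy's counting argument be pushed across cylinders of the antichain using the uniform $\nu$-mass contained in each cell. A secondary delicate point is the critical case $s_r=t_r$, where the matching upper and lower bounds must each carry the same logarithmic correction in $n$.
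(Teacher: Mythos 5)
Your high-level plan matches the paper's: iterate the defining equation, decompose $\mu$ into the $\nu$-pieces $p_0 p_\sigma\,\nu\circ f_\sigma^{-1}$ and the residual pieces $p_\sigma\mu\circ f_\sigma^{-1}$ indexed by an antichain, estimate the upper error via point allocation and the lower error via separated cells of comparable energy. Two of your choices are genuine improvements in presentation: the shortcut $\mu\geq p_0\nu$, giving $\underline{Q}_r^{s_r}(\mu)\geq p_0^{1/r}\underline{Q}_r^{s_r}(\nu)>0$ directly in the regime $\xi_r=s_r$, is cleaner than the paper's route through the full apparatus; and your allocation $n_\sigma\propto(p_\sigma s_\sigma^r)^{s_r/(s_r+r)}$ is functionally equivalent to the paper's further subdivision of each $f_\sigma(C)$ into $\Phi^*$-cells of uniformly comparable energy $p_\sigma s_\sigma^r t_\rho c_\rho^r\approx\underline{\eta}_r^k$ (the paper simply avoids invoking the Graf--Luschgy asymptotic for $\nu$ by putting one point per subcell). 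A small slip: for the lower bound in the regime $\xi_r=t_r$, the Moran antichain in $\Omega^*$ should be cut at a threshold on $p_\sigma s_\sigma^r$, not on $s_\sigma$.

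The genuine gap is exactly the step you yourself flag as ``the principal difficulty'': the quantitative separation needed for the counting argument in the lower bound, and with it the divergence in the critical case $s_r=t_r$. Disjointness of the open cells $f_\sigma(U)$ is not sufficient. The actual cells you must count optimal points against, namely the residual cells $f_\sigma(K)\subset f_\sigma({\rm cl}(U))$ and the $\nu$-cells $f_\sigma(C_\rho)$, may touch along boundaries, whereas the Graf--Luschgy counting lemma requires a uniform constant $\delta>0$ with $d(A_1,A_2)\geq\delta\max\{|A_1|,|A_2|\}$ for every pair of distinct cells. Producing such a $\delta$ is the technical core of the paper: one replaces $f_\sigma(K)$ by the shrunk copies $f_{\sigma\ast\tau^{(0)}}(K)$ for a fixed word $\tau^{(0)}$ with $f_{\tau^{(0)}}(K)\subset U$, replaces $C_\rho$ by $C_{\rho\ast\rho^{(0)}}$ using Schief's interior compact set $J$ for the OSC of $(g_i)$, introduces an auxiliary open set $W\supset C$ with ${\rm cl}(W)\cap f_i({\rm cl}(U))=\emptyset$ (this is precisely where the strengthened part of (A4), $C\cap f_i({\rm cl}(U))=\emptyset$, is used), and then checks four separation cases covering $\nu$-cell vs.\ $\nu$-cell, $\nu$-cell vs.\ residual, and residual vs.\ residual, across the possible ancestral configurations. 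Without this, your counting step does not go through, and the critical case $s_r=t_r$ (which in the paper rests on the separated-cells lower bound combined with $I_k(s_r)\geq l_{1,k}\to\infty$ and a reverse Hölder inequality to get $\phi_{k,r}^{r/\xi_r}e^r_{\phi_{k,r},r}(\mu)\gtrsim l_{1,k}^{(\xi_r+r)/\xi_r}$) is also out of reach. Note also that ``recovering a Graf--Luschgy bound at dimension $t_r$'' when $s_r<t_r$ would overshoot: only $\overline{D}_r(\mu)\leq t_r$ holds there, i.e.\ $\overline{Q}_r^s(\mu)<\infty$ for every $s>t_r$, not finiteness of the coefficient at $t_r$ itself.
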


As a consequence of Theorem \ref{mthm1}, the following corollary shows that, for sufficiently small $r>0$, the asymptotic property of the quantization error of $\mu$ is essentially identical to that of $\nu$. This reflects some intrinsic properties of the measure $\mu$. Indeed, by \cite[Corollary 2.3]{Olsen:07}, similar property holds for the $L^q$-spectrum of $\mu$.
\begin{corollary}
There exists some $r_0>0$ such that, for all $r\in(0,r_0)$, we have
\begin{eqnarray*}
D_r(\mu)=D_r(v)=s_r,\;\;{\rm and}\;\;0<\underline{Q}_r^{s_r}(\mu)\leq\overline{Q}_r^{s_r}(\mu)<\infty.
\end{eqnarray*}
\end{corollary}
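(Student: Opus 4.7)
The plan is to show that, for all sufficiently small $r>0$, one has $s_r>t_r$, so that $\xi_r=s_r$; after that, the corollary is a direct consequence of Theorem \ref{mthm1} together with the Graf--Luschgy theorem for $\nu$ recalled in the introduction. Thus the only non-routine step is comparing the two exponents as $r\to 0^+$.

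First I would study $t_r$. Setting $\beta'_r:=t_r/(t_r+r)$, its defining relation becomes $\sum_{i=1}^N p_i^{\beta'_r}s_i^{r\beta'_r}=1$. At $r=0$ this reduces to $\sum_{i=1}^N p_i^{\beta'_0}=1$; since $\sum_{i=1}^N p_i=1-p_0<1$, the function $\beta\mapsto\sum_{i=1}^N p_i^{\beta}$ is strictly decreasing from $N$ at $\beta=0$ to $1-p_0<1$ at $\beta=1$, so a unique solution $\beta'_0\in(0,1)$ exists. A standard implicit-function/monotonicity argument then gives $\beta'_r\to\beta'_0<1$, hence
\begin{equation*}
t_r=\frac{r\beta'_r}{1-\beta'_r}\longrightarrow 0\quad\text{as }r\to 0^+.
\end{equation*}

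For $s_r$, the analogous substitution $\beta_r:=s_r/(s_r+r)$ together with $\sum_{i=1}^M t_i=1$ forces $\beta_r\to 1$, making the formula $s_r=r\beta_r/(1-\beta_r)$ indeterminate at $r=0$. A first-order expansion of $t_i^{\beta_r}c_i^{r\beta_r}$ in both $1-\beta_r$ and $r$ balances $(1-\beta_r)\,h(\nu)$ against $r\,\lambda(\nu)$, where $h(\nu):=-\sum_{i=1}^M t_i\log t_i$ and $\lambda(\nu):=-\sum_{i=1}^M t_i\log c_i$ are the entropy and Lyapunov exponent of $\nu$, both strictly positive. Consequently $s_r\to h(\nu)/\lambda(\nu)>0$ (the information dimension of $\nu$, a classical limit I would cite rather than rederive). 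Therefore there exists $r_0>0$ with $s_r>t_r$ on $(0,r_0)$, and for such $r$ Theorem \ref{mthm1} yields $\xi_r=s_r$, $D_r(\mu)=s_r$, $\underline{Q}_r^{s_r}(\mu)>0$ and $\overline{Q}_r^{s_r}(\mu)<\infty$; combined with $D_r(\nu)=s_r$ from Graf--Luschgy, this gives the corollary. The main obstacle is cleanly extracting the positive limit of $s_r$ from the degenerate form of its defining equation.
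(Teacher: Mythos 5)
Your proof is correct and targets exactly the same pivot as the paper: establish $s_r>t_r$ for all sufficiently small $r$, then invoke Theorem~\ref{mthm1} together with the Graf--Luschgy result for $\nu$. The route you take to that inequality, however, differs from the paper's. The paper applies H\"older's inequality once to the power sum $\sum_{i=1}^N(p_is_i^r)^{s_r/(s_r+r)}$, bounding it by $\big(\sum_ip_i\big)^{s_r/(s_r+r)}\big(\sum_is_i^{s_r}\big)^{r/(s_r+r)}\to 1-p_0<1$ as $r\to 0$; since $x\mapsto\sum_i(p_is_i^r)^x$ is strictly decreasing and $\sum_i(p_is_i^r)^{t_r/(t_r+r)}=1$, the strict inequality $<1$ at exponent $s_r/(s_r+r)$ forces $s_r/(s_r+r)>t_r/(t_r+r)$, hence $s_r>t_r$, without ever pinning down where $s_r$ or $t_r$ actually go. You instead compute the individual limits, showing $t_r\to 0$ via the substitution $\beta'_r=t_r/(t_r+r)$ (using $\sum_ip_i=1-p_0<1$ to get $\beta'_0<1$) and $s_r\to h(\nu)/\lambda(\nu)>0$ via the standard information-dimension limit. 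Your version is more explicit and instructive about the asymptotics of both exponents, at the cost of citing (or rederiving by a first-order expansion) the classical $r\to 0$ limit for the self-similar quantization dimension; the paper's version is shorter and self-contained modulo the implicit boundedness of $s_r$ away from $0$, which both arguments ultimately rely on. Either way the conclusion $D_r(\mu)=D_r(\nu)=s_r$ and $0<\underline{Q}_r^{s_r}(\mu)\leq\overline{Q}_r^{s_r}(\mu)<\infty$ follows.
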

\begin{proof}
By H\"{o}lder's inequality, we have
\begin{eqnarray*}
\sum_{i=1}^N(p_is_i^r)^{\frac{s_r}{s_r+r}}\leq\bigg(\sum_{i=1}^Np_i\bigg)^{\frac{s_r+r}{s_r}}
\bigg(\sum_{i=1}^Nc_i^{s_r}\bigg)^{\frac{r}{s_r+r}}\to (1-p_0),\;\;r\to 0.
\end{eqnarray*}
Thus, there exists some $r_0>0$ such that $s_r>t_r$ for all $0<r<r_0$. The corollary follows immediately from Theorem \ref{mthm1}.
\end{proof}

\section{Notations and preliminary facts}

First we give some notations which we will need later. Set
\begin{eqnarray*}
\Omega_0:=\{\theta\},\;\Omega_k:=\{1,\ldots, N\}^k,\;k\geq 1;\; \Omega^*=\bigcup_{k\geq 0}\Omega_k.
\end{eqnarray*}
For every $k\geq 1$ and $\sigma=(\sigma_1,\ldots,\sigma_k)\in\Omega_k$, we define
\begin{eqnarray*}
f_\sigma:=f_{\sigma_1}\circ f_{\sigma_2}\cdots\circ f_{\sigma_k},\;
p_\sigma:=\prod_{h=1}^np_{\sigma_h},\;s_\sigma:=\prod_{h=1}^ns_{\sigma_h}.
\end{eqnarray*}
For the empty word $\theta$, we define $f_\theta:=1_{\mathbb{R}^d}$ and $p_\theta=s_\theta=1$.

We define $|\sigma|:=n$ for $\sigma\in\Omega_n$. For
any $\sigma\in\Omega^*$ with $|\sigma|\geq n$, we
write $\sigma|_n:=(\sigma_1,\ldots,\sigma_n)$. For $\sigma,\tau\in\Omega^*$, we write
$\sigma\ast\tau:=(\sigma_1,\ldots,\sigma_{|\sigma|},\tau_1,\ldots,\tau_{|\tau|})$. If $\sigma,\tau\in\Omega^*$ and
$|\sigma|\leq|\tau|,\sigma=\tau|_{|\sigma|}$, then we write $\sigma\prec\tau$ and call $\sigma$ a predecessor of $\tau$. Two words
$\sigma,\tau\in\Omega^*$ are said to be incomparable if we have neither $\sigma\prec\tau$
nor $\tau\prec\sigma$. A finite set $\Gamma\subset\Omega^*$ is
called a finite anti-chain if any two words $\sigma,\tau$ in
$\Gamma$ are incomparable. A finite anti-chain is said to be maximal if
any word $\sigma\in\Omega^{\mathbb{N}}$ has a predecessor in $\Gamma$. For a word $\sigma=(\sigma_1,\ldots,\sigma_n)\in\Omega_n$, we define
\begin{equation}\label{s1}
\sigma^-=\theta \;\;{\rm if}\;\;n=1,\;\;\sigma^-:=\sigma|_{n-1}=(\sigma_1,\ldots,\sigma_{n-1})\;\;{\rm if}\;\;n>1.
\end{equation}
For every $\sigma\in\Omega^*$ and $h\geq 1$, we will need to consider:
\begin{eqnarray}\label{s30}
\Gamma(\sigma,h):=\{\tau\in\Omega_{|\sigma|+h}:\sigma\prec\tau\},\;\;\Gamma^*(\sigma):=\bigcup_{k\geq 1}\Gamma(\sigma,h).
\end{eqnarray}
For every $n\geq1$, by iterating (\ref{attractingmeasure}), one easily gets (cf. \cite{Olsen:08})
\begin{eqnarray}
K=\bigg(\bigcup_{h=0}^{n-1}\bigcup_{\sigma\in\Omega_h}f_\sigma(C)\bigg)\cup\bigg(\bigcup_{\sigma\in\Omega_n}f_\sigma(K)\bigg),\label{s2}\\
\mu=p_0\sum_{h=0}^{n-1}\sum_{\sigma\in\Omega_h}p_\sigma\nu\circ f_\sigma^{-1}+\sum_{\sigma\in\Omega_n}p_\sigma\mu\circ f_\sigma^{-1}.\label{s3}
\end{eqnarray}

Let $\mathcal{B}$ denote the Borel $\sigma$-algebra of $\mathbb{R}^q$. We will need to apply the following results of Olsen and Snigireva. We collect these results in the following lemma. One may see Lemmas 4.1, 4.2 and Proposition 4.3 of \cite{Olsen:08} for more details.
\begin{lemma}\label{olsen}\cite{Olsen:08}
Let $U$ be the open set from the IOSC. Then, we have
\begin{enumerate}
\item[\rm (a)] ${\rm supp}(\mu)=K\subset {\rm cl}(U)$;
\item[\rm (b)]$f_{\tau}^{-1}f_\sigma({\rm cl}(U))=\emptyset$ for $\sigma,\tau\in\Omega^*$ with $|\sigma|=|\tau|$; $\nu(f_\tau^{-1}f_\sigma({\rm cl}(U)))=0$ for $\sigma,\tau\in\Omega^*$ with $|\tau|<|\sigma|$;
\item[\rm (c)]$\mu(B)=\mu(B\cap U),B\in\mathcal{B}$;
\item[\rm (d)]$\mu(f_\sigma(K))=\mu(f_\sigma(\rm cl(U)))=p_\sigma,\;\sigma\in\Omega^*$;
\end{enumerate}
\end{lemma}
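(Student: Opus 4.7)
The plan is to establish the four assertions (a)--(d) in that order, leaning at each step on the IOSC axioms (A1)--(A4) together with the iterated invariance formulas (\ref{s2})--(\ref{s3}). For (a), I first observe that $\operatorname{cl}(U)$ is compact, contains $C$ by (A3), and is forward-invariant under each $f_i$ by (A1) and continuity. Iterating (\ref{s2}) from $\operatorname{cl}(U)$ gives $f_\sigma(C)\subset\operatorname{cl}(U)$ for all $\sigma$, and since the diameters of the tail pieces $f_\sigma(K)$ shrink to zero with $|\sigma|$, one concludes $K\subset\operatorname{cl}(U)$. To get $\operatorname{supp}(\mu)=K$, I let $n\to\infty$ in (\ref{s3}): since $\sum_{|\sigma|=n}p_\sigma=(1-p_0)^n\to 0$, the tail is negligible and $\mu=p_0\sum_{h\geq 0}\sum_{\sigma\in\Omega_h}p_\sigma\,\nu\circ f_\sigma^{-1}$, whose support is $\operatorname{cl}\!\bigl(\bigcup_\sigma f_\sigma(C)\bigr)=K$.

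For (b), the first clause is the expected pairwise disjointness of distinct same-level cylinders, obtained by iterating (A2) and strengthening open disjointness to closed disjointness against the support data using $C\cap f_i(\operatorname{cl}(U))=\emptyset$ from (A4). For the second clause, $\nu(f_\tau^{-1}f_\sigma(\operatorname{cl}(U)))=0$ with $|\tau|<|\sigma|$, I split cases. If $\tau\prec\sigma$, then writing $\sigma=\tau\ast\eta$ with $|\eta|\geq 1$ reduces the set to $f_\eta(\operatorname{cl}(U))\subset f_{\eta_1}(\operatorname{cl}(U))$, which is disjoint from $C=\operatorname{supp}(\nu)$ by (A4). If $\tau$ and $\sigma$ are incomparable, I peel off their common prefix and use (A2) to separate the trailing first letters and (A4) again to kill the resulting $\nu$-mass.

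For (c), the iterated identity (\ref{s3}) gives
\begin{equation*}
\mu(\partial U)=p_0\sum_{h=0}^{n-1}\sum_{\sigma\in\Omega_h}p_\sigma\,\nu(f_\sigma^{-1}(\partial U))+\sum_{\sigma\in\Omega_n}p_\sigma\,\mu(f_\sigma^{-1}(\partial U)).
\end{equation*}
A point $x\in C\cap f_\sigma^{-1}(\partial U)$ would force $f_\sigma(x)\in f_\sigma(C)\subset f_\sigma(U)\subset U$, contradicting $f_\sigma(x)\in\partial U$; since $\operatorname{supp}(\nu)=C$, every $\nu$-term vanishes. The tail sum is bounded by $(1-p_0)^n\to 0$, so $\mu(\partial U)=0$, and part (a) then yields $\mu(B)=\mu(B\cap U)$ for every Borel $B$.

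For (d), I evaluate (\ref{s3}) at $f_\sigma(\operatorname{cl}(U))$ with $n=|\sigma|$. The diagonal $\sigma'=\sigma$ term contributes $p_\sigma\mu(\operatorname{cl}(U))=p_\sigma$ by (a) and (c), while every off-diagonal $\nu$-summand vanishes by the second clause of (b) and every off-diagonal $\mu$-summand vanishes because the relevant set lies outside $U$, where (c) forbids $\mu$-mass. The equality $\mu(f_\sigma(K))=\mu(f_\sigma(\operatorname{cl}(U)))$ then follows from $f_\sigma(\operatorname{cl}(U))\setminus f_\sigma(K)\subset\operatorname{cl}(U)\setminus K$, which is $\mu$-null by (a). The main obstacle I expect is the incomparable subcase of (b): the plain OSC (A2) only gives disjoint open images, so the stronger geometric separation built into (A4)---that $C$ is disjoint from each first-generation \emph{closed} cylinder---has to be combined with (A2) recursively to squeeze the $\nu$-mass down to zero.
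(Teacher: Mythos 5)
A preliminary remark: the paper itself gives no proof of this lemma; it is quoted from Olsen and Snigireva (Lemmas 4.1, 4.2 and Proposition 4.3 of \cite{Olsen:08}), and the remark following it only checks that (A1)--(A4) imply the hypotheses used there. So your argument has to be judged on its own merits. Your overall strategy is the right one: iterate (\ref{s2})--(\ref{s3}), use (A1) for nesting, (A2) for separation, (A3)--(A4) to control $\nu$, and let the tail mass $(1-p_0)^n\to 0$. Your treatments of (a) and (c) are correct (in (a) the identification $K={\rm cl}\bigl(\bigcup_{\sigma}f_\sigma(C)\bigr)$ deserves one line via uniqueness of the attractor, but that is standard), and your case split for the second clause of (b) works; note that in the incomparable case you even get $C\cap f_\tau^{-1}f_\sigma({\rm cl}(U))=\emptyset$ from (A2)--(A3) alone, so no measure argument and no use of (A4) is needed there.

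Two steps, however, do not hold as written. First, in (b) you read the first clause as genuine pairwise disjointness of the distinct same-level \emph{closed} cylinders and propose to obtain it by ``strengthening open disjointness to closed disjointness'' via (A4). That statement is false in general and not derivable from (A1)--(A4): take $f_1(x)=x/3$, $f_2(x)=x/3+1/3$, $U=(0,1)$ and $C\subset(2/3,1)$; the IOSC holds, yet $f_1({\rm cl}(U))\cap f_2({\rm cl}(U))=\{1/3\}$. (A4) separates $C$ from the sets $f_i({\rm cl}(U))$; it says nothing about the cylinders among themselves. The printed clause ``$=\emptyset$'' is evidently a typo for $f_\tau^{-1}f_\sigma({\rm cl}(U))\cap U=\emptyset$ for $\sigma\neq\tau$ with $|\sigma|=|\tau|$ --- this is exactly the form used in Lemma \ref{g1} and in your own part (d) --- and it follows from (A2) alone: ${\rm cl}(f_\sigma(U))=f_\sigma({\rm cl}(U))$ cannot meet the open set $f_\tau(U)$ disjoint from $f_\sigma(U)$; then peel off the common prefix. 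Second, in (d) the closing inclusion $f_\sigma({\rm cl}(U))\setminus f_\sigma(K)\subset{\rm cl}(U)\setminus K$ is unjustified: a point of $f_\sigma({\rm cl}(U))$ outside $f_\sigma(K)$ may still lie in $K$ (for instance a point of $f_\sigma(\partial U)$ that is a limit of pieces $f_\tau(C)$ with $\tau\not\succ\sigma$), so (a) does not make this difference $\mu$-null. The repair is cheap and stays inside your framework: apply (\ref{s3}) with $n=|\sigma|$ to $f_\sigma(K)$ itself, whose diagonal term already gives $\mu(f_\sigma(K))\geq p_\sigma\mu(K)=p_\sigma$, while $f_\sigma(K)\subset f_\sigma({\rm cl}(U))$ together with your computation $\mu(f_\sigma({\rm cl}(U)))=p_\sigma$ gives the reverse inequality.
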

\begin{remark}{\rm
As is shown by the proofs in \cite[Section 4]{Olsen:08}, Lemma \ref{olsen} holds if the following conditions are satisfied: (l1) $f_i(U)\subset U$ for all $1\leq i\leq N$; (l2) $f_i(U),1\leq i\leq N$, are pairwise disjoint; (l3) $E\cap U\neq\emptyset$; (l4) $\nu(\partial(U))=0$; $\nu(f_i({\rm cl}(U))=0$ for all $1\leq i\leq N$. One can see that these conditions are ensured by the IOSC in (A1)-(A4).
}\end{remark}

Let $\nu$ be the self-similar measure associated with $(g_i)_{i=1}^M$ satisfying the OSC and a probability vector $(t_i)_{i=1}^M$. Let $c_i$ be the contraction ratio of $g_i$ for $i=1,\ldots, M$. Set
\begin{eqnarray*}
\Phi_0:=\{\theta\},\;\;\Phi_k:=\{1,\ldots, M\}^k,\;\;\Phi^*=\bigcup_{k\geq 0}\Phi_k.
\end{eqnarray*}
For $\omega\in\Phi^*$, let $t_\omega,d_\omega$ be defined in the same way as we did for $p_\sigma$ in (\ref{s1}). We define $g_\omega$ the same way as $f_\sigma$ and write $C_\omega:=g_\omega(C)$. For every pair $\omega,\rho\in\Phi^*$, let $\omega\ast\rho$ be defined as we did for words of $\Omega^*$. We have

\begin{lemma}\label{g1}
Let $\mu$ be as stated in Theorem \ref{mthm1}. Assume that the IOSC is satisfied. Then for every $\sigma\in\Omega^*$ and $\omega\in\Phi^*$, we have
$\mu(f_\sigma(C_\omega))=p_0p_\sigma t_\omega$.
\end{lemma}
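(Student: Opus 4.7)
My plan is to chain the two self-similar decompositions that $\mu$ inherits from its condensation system. The first, applied at depth $|\sigma|$ via (\ref{s3}), will peel off the prefactor $p_\sigma$ from $f_\sigma(C_\omega)$; the second, applied at depth one via (\ref{attractingmeasure}) together with the classical OSC identity for $\nu$, will extract the remaining factor $p_0 t_\omega$ from $\mu(C_\omega)$.

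For the first step I would apply (\ref{s3}) with $n=|\sigma|$ to the set $f_\sigma(C_\omega)$. Since $C\subset U\subset{\rm cl}(U)$ by (A3), we have $f_\sigma(C_\omega)\subset f_\sigma({\rm cl}(U))$. Lemma \ref{olsen}(b) then kills every $\nu$-term with $|\tau|<|\sigma|$ and forces every $\mu$-term with $\tau\in\Omega_{|\sigma|}\setminus\{\sigma\}$ to be evaluated on the empty set. Only the diagonal term $\tau=\sigma$ survives, and the injectivity of $f_\sigma$ turns it into
\[
\mu(f_\sigma(C_\omega))=p_\sigma\,\mu(C_\omega).
\]

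For the second step, applying (\ref{attractingmeasure}) directly to $C_\omega$ gives $\mu(C_\omega)=p_0\nu(C_\omega)+\sum_{i=1}^N p_i\mu(f_i^{-1}(C_\omega))$. Each summand in the finite sum vanishes: by Lemma \ref{olsen}(c) one has $\mu(f_i^{-1}(C_\omega))=\mu(f_i^{-1}(C_\omega)\cap U)$, and any point in this intersection would have its $f_i$-image in $f_i(U)\cap C_\omega\subset f_i({\rm cl}(U))\cap C$, which is empty by (A4). Finally, the standard OSC identity $\nu(g_\omega(C))=t_\omega$---obtained by iterating $\nu=\sum_i t_i\,\nu\circ g_i^{-1}$ at depth $|\omega|$ and noting that, under the OSC for $(g_i)$, the cross terms $\nu(g_\rho^{-1}(g_\omega(C)))$ with $\rho\neq\omega$ vanish---closes the argument.

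The only real subtlety lies in Step 2, where the clause $C\cap f_i({\rm cl}(U))=\emptyset$ from (A4)---exactly the strengthening singled out in Remark (R1)---is doing essential work. Without it, $f_i^{-1}(C_\omega)$ could meet $U$ and thereby carry positive $\mu$-mass, spoiling the clean product $p_0 p_\sigma t_\omega$. Everything else reduces to routine bookkeeping against Lemma \ref{olsen}; the case $|\sigma|=0$ is covered by Step 2 alone since $f_\theta$ is the identity and $p_\theta=1$.
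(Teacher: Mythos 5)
Your proposal is correct and follows essentially the same route as the paper's proof: both reduce $(\ref{s3})$ at depth $|\sigma|$ to the single diagonal term $p_\sigma\mu\circ f_\sigma^{-1}$ by applying Lemma~\ref{olsen}(b) and (c), and both use (A4) together with Lemma~\ref{olsen}(c) to kill the $\mu\circ f_i^{-1}$ contributions and obtain $\mu(C_\omega)=p_0\nu(C_\omega)=p_0 t_\omega$. The only cosmetic difference is the order of the two steps (the paper computes $\mu(C_\omega)$ first, you do the $f_\sigma$-reduction first), which is immaterial.
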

\begin{proof}
By (A4), we have, $C\cap f_i(U)=\emptyset$ for all $1\leq i\leq N$. Hence,
\begin{eqnarray*}
f_i^{-1}(C_\omega)\cap U=f_i^{-1}(C_\omega\cap f_i(U))\subset f_i^{-1}(C\cap f_i(U))=\emptyset.
\end{eqnarray*}
So $\mu(f_i^{-1}(C_\omega)\cap U)=0$ for all $1\leq i\leq N$. Note that $\nu$ is the self-similar measure associated with $(g_i)_{i=1}^M$ satisfying the OSC and the probability vector $(t_i)_{i=1}^M$. Thus, $\nu(C_\omega)=t_\omega$ for every $\omega\in\Phi^*$ (cf. \cite{Gr:95}). By (\ref{attractingmeasure}) and Lemma \ref{olsen} (c), we deduce
\begin{eqnarray*}
\mu(C_\omega)&=&p_0\nu(C_\omega)+\sum_{i=1}^Np_i\mu\circ f_i^{-1}(C_\omega)\\&=&p_0t_\omega+\sum_{i=1}^Np_i\mu(f_i^{-1}(C_\omega)\cap U)=p_0t_\omega.
\end{eqnarray*}
 By Lemma \ref{olsen} (b), for all $\tau\in\Omega^*$ with $|\tau|<|\sigma|$, we have
\begin{eqnarray*}
\nu\circ f_\tau^{-1}(f_\sigma(C_\omega))\leq\nu\circ f_\tau^{-1}(f_\sigma({\rm cl}(U)))=0;
\end{eqnarray*}
for all $\tau\in\Omega_{|\sigma|}$ with $\tau\neq\sigma$, by Lemma \ref{olsen} (b), we have $f_\tau^{-1}f_\sigma({\rm cl}(U))\cap U=\emptyset$. Hence,
\begin{eqnarray*}
\mu\circ f_\tau^{-1}(f_\sigma(C_\omega))=\mu(f_\tau^{-1}(f_\sigma(C_\omega))\cap U)\leq\mu(f_\tau^{-1}(f_\sigma({\rm cl}(U))\cap U)=0.
\end{eqnarray*}
Thus, by applying (\ref{s3}) with $n=|\sigma|$, we have
\begin{eqnarray*}
\mu(f_\sigma(C_\omega))&=&p_0\sum_{h=0}^{|\sigma|-1}\sum_{\tau\in\Omega_h}p_\sigma\nu\circ f_\tau^{-1}(f_\sigma(C_\omega))\\&&\;\;\;\;+\sum_{\tau\in\Omega_{|\sigma|}}p_\tau\mu\circ f_\tau^{-1}(f_\sigma(C_\omega))\\&=&p_\sigma\mu\circ f_\sigma^{-1}(f_\sigma(C_\omega))=p_\sigma\mu(C_\omega)=p_0p_\sigma t_\omega.
\end{eqnarray*}
This completes the proof of the lemma.
\end{proof}

In order to estimate the quantization error of $\mu$, we need to divide its support $K$ into small parts with some kind of uniformity. Our next lemma is the first step of doing so. For a finite maximal antichain $\Gamma$, we define
\begin{eqnarray*}
l(\Gamma):=\min_{\sigma\in\Gamma}|\sigma|,\;\;L(\Gamma):=\max_{\sigma\in\Gamma}|\sigma|.
\end{eqnarray*}
For each $\sigma\in\Omega_{l(\Gamma)}$, we define
\begin{eqnarray*}
\Lambda_\Gamma(\sigma):=\{\tau\in\Omega^*:\sigma\prec\tau,\Gamma^*(\tau)\cap\Gamma\neq\emptyset\},\;\;\Lambda_\Gamma^*:=\bigcup_{\sigma\in\Omega_{l(\Gamma)}}\Lambda_\Gamma(\sigma).
\end{eqnarray*}
By the definition of $\Gamma^*(\sigma)$ (see (\ref{s30})), we note that if $l(\Gamma)=L(\Gamma)$, then $\Lambda_\Gamma^*=\emptyset$.
\begin{lemma}\label{g2}
Let $\Gamma$ be a finite maximal antichain. Then
\begin{eqnarray}\label{s5}
K=\bigg(\bigcup_{h=0}^{l(\Gamma)-1}\bigcup_{\sigma\in\Omega_h}f_\sigma(C)\bigg)\cup\bigg(\bigcup_{\sigma\in\Lambda_\Gamma^*}f_\sigma(C)\bigg)
\cup\bigg(\bigcup_{\sigma\in\Gamma}f_\sigma(K)\bigg).
\end{eqnarray}
\end{lemma}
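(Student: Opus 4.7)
My plan is to establish both inclusions in (\ref{s5}) directly, using (\ref{s2}) at the deepest level $L(\Gamma)$ and then classifying each piece via a trichotomy driven by the antichain $\Gamma$. The reverse inclusion ``$\supseteq$'' is immediate: since $C\subset K$ by (\ref{attractingmeasure}) and $f_i(K)\subset K$ for every $i$, iterating gives $f_\sigma(C)\subset f_\sigma(K)\subset K$ for every $\sigma\in\Omega^*$, and likewise $f_\sigma(K)\subset K$ for every $\sigma\in\Gamma$, so all three unions on the right of (\ref{s5}) sit inside $K$.

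For the forward inclusion ``$\subseteq$'', I apply (\ref{s2}) with $n=L(\Gamma)$ to obtain
\[
K=\bigg(\bigcup_{h=0}^{L(\Gamma)-1}\bigcup_{\sigma\in\Omega_h}f_\sigma(C)\bigg)\cup\bigg(\bigcup_{\sigma\in\Omega_{L(\Gamma)}}f_\sigma(K)\bigg),
\]
and inspect each factor. The key combinatorial fact is the following trichotomy: for every $\sigma\in\Omega^*$, because $\Gamma$ is a finite maximal antichain, exactly one of (i) $\sigma\in\Gamma$, (ii) $\sigma$ is a strict predecessor of some $\rho\in\Gamma$ (i.e., $\sigma\prec\rho$ with $|\sigma|<|\rho|$), or (iii) $\sigma$ strictly extends some $\rho\in\Gamma$ (i.e., $\rho\prec\sigma$ with $|\rho|<|\sigma|$) holds. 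This is seen by picking any infinite extension of $\sigma$, using maximality to extract a predecessor in $\Gamma$, and comparing lengths; the antichain property rules out simultaneous occurrences.

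With the trichotomy in hand the case analysis is mechanical. For $f_\sigma(C)$ with $|\sigma|=h<l(\Gamma)$, the piece already lies in the first union of (\ref{s5}). For $f_\sigma(C)$ with $l(\Gamma)\leq|\sigma|<L(\Gamma)$: case (i) gives $f_\sigma(C)\subset f_\sigma(K)$, placing it in the third union; case (ii) shows $\sigma\in\Lambda_\Gamma(\sigma|_{l(\Gamma)})\subset\Lambda_\Gamma^*$ (the witness $\eta:=\sigma|_{l(\Gamma)}\in\Omega_{l(\Gamma)}$ satisfies $\eta\prec\sigma$ and $\Gamma^*(\sigma)\cap\Gamma\ni\rho$), placing it in the second union; case (iii) yields $f_\sigma(C)\subset f_\rho(K)$ with $\rho\in\Gamma$, so again in the third union. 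For $f_\sigma(K)$ with $|\sigma|=L(\Gamma)$, case (ii) is impossible since no element of $\Gamma$ has length greater than $L(\Gamma)$, and cases (i), (iii) again deposit the piece in the third union. The trivial edge case $L(\Gamma)=l(\Gamma)=0$ (so $\Gamma=\{\theta\}$, both of the first two unions empty, third union equal to $K$) is handled separately. I do not anticipate any real obstacle: the only thing to be careful about is the disjointness and exhaustiveness of the trichotomy; the result is essentially a combinatorial unfolding of (\ref{s2}) tailored to the non-uniform stopping rule encoded by $\Gamma$.
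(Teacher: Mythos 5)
Your proof is correct, and it takes a genuinely different route from the paper's. The paper argues by induction on $L(\Gamma)$: the base case $L(\Gamma)=l(\Gamma)$ is just (\ref{s2}), and the inductive step replaces the deepest antichain elements $\Gamma\cap\Omega_{L(\Gamma)}$ by their parents $\Gamma^{(-1)}$, forms the shallower antichain $\Gamma_\flat$, applies the hypothesis, and then unfolds each $f_\sigma(K)$ for $\sigma\in\Gamma^{(-1)}$ one more step via $f_\sigma(K)=f_\sigma(C)\cup\bigcup_i f_{\sigma\ast i}(K)$. You instead descend in one shot to depth $L(\Gamma)$ using (\ref{s2}), and then sort every piece $f_\sigma(C)$ or $f_\sigma(K)$ into the three unions by the trichotomy (member of $\Gamma$ / strict predecessor of something in $\Gamma$ / strict descendant of something in $\Gamma$), which holds precisely because $\Gamma$ is a finite maximal antichain. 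Your direct argument is arguably more transparent and avoids the bookkeeping around $\Gamma_\flat$; the paper's inductive construction is more in the spirit of ``refining one level at a time,'' which matches how the antichains $\Gamma_{k,r}$ are used later, but for this particular lemma it isn't needed. Two small remarks: your characterization of $\Lambda_\Gamma^*$ (words of length $\geq l(\Gamma)$ with a strict descendant in $\Gamma$) is exactly what the definition via $\Gamma^*(\tau)$ gives, so the case-(ii) placement is right; and the uniform argument already covers $L(\Gamma)=l(\Gamma)$ (not just $=0$), since then the middle range $l(\Gamma)\le|\sigma|<L(\Gamma)$ is empty and every $\sigma\in\Omega_{L(\Gamma)}$ falls into case (i) — so the explicit edge-case mention, while harmless, is not strictly necessary.
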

\begin{proof}
We give the proof by mathematical induction with $L(\Gamma)$. By (\ref{s2}),
\begin{eqnarray*}
K=\bigg(\bigcup_{h=0}^{l(\Gamma)-1}\bigcup_{\sigma\in\Omega_h}f_\sigma(C)\bigg)\cup\bigg(\bigcup_{\sigma\in\Omega_{l(\Gamma)}}f_\sigma(K)\bigg).
\end{eqnarray*}
So, if $L(\Gamma)=l(\Gamma)$, the lemma is true. Now assume that (\ref{s5}) holds for $L(\Gamma)=l(\Gamma)+p$. Next we show that it holds for $L(\Gamma)=l(\Gamma)+p+1$. Let $\Gamma$ be such a finite maximal antichain. We write
\begin{eqnarray*}
\Gamma^{(-1)}:=\big\{\tau^-:\tau\in\Gamma\cap\Omega_{L(\Gamma)}\big\},\;\Gamma_\flat:=\big(\Gamma\setminus\Omega_{L(\Gamma)}\big)\cup\Gamma^{(-1)}.
\end{eqnarray*}
Then $l(\Gamma_\flat)=l(\Gamma)$ and $L(\Gamma_\flat)=l(\Gamma_\flat)+p$. By the induction hypothesis, we have
\begin{eqnarray}\label{s6}
K=\bigg(\bigcup_{h=0}^{l(\Gamma)-1}\bigcup_{\sigma\in\Omega_h}f_\sigma(C)\bigg)\cup
\bigg(\bigcup_{\omega\in\Lambda_{\Gamma_\flat}^*}f_\omega(C)\bigg)\cup\bigg(\bigcup_{\sigma\in\Gamma_\flat}f_\sigma(K)\bigg).
\end{eqnarray}
For every $\sigma\in\Gamma^{(-1)}$, by (\ref{attractingmeasure}), we have
\begin{eqnarray}\label{s31}
f_\sigma(K)=f_\sigma(C)\cup f_\sigma\bigg(\bigcup_{i=1}^Nf_i(K)\bigg)=f_\sigma(C)\cup \bigg(\bigcup_{i=1}^Nf_{\sigma\ast i}(K)\bigg).
\end{eqnarray}
Note that $\Gamma^{(-1)}\cup\Lambda_{\Gamma_\flat}^*=\Lambda_\Gamma^*$ and that $\big(\Gamma_\flat\setminus\Gamma^{(-1)}\big)\cup\big(\bigcup_{\sigma\in\Gamma^{(-1)}}\Gamma(\sigma, 1)\big)=\Gamma$. Using (\ref{s6}) and (\ref{s31}),
we deduce
\begin{eqnarray*}
K&=&\bigg(\bigcup_{h=0}^{l(\Gamma)-1}\bigcup_{\sigma\in\Omega_h}f_\sigma(C)\bigg)\cup
\bigg(\bigcup_{\omega\in\Lambda_{\Gamma_\flat}^*}f_\omega(C)\bigg)\\&&\cup
\bigg(\bigcup_{\sigma\in\Gamma_\flat\setminus\Gamma^{(-1)}}f_\sigma(K)\bigg)\cup\big(\bigcup_{\sigma\in\Gamma^{(-1)}}f_\sigma(K)\big)\\
&=&\bigg(\bigcup_{h=0}^{l(\Gamma)-1}\bigcup_{\sigma\in\Omega_h}f_\sigma(C)\bigg)\cup
\bigg(\bigcup_{\omega\in\Lambda_{\Gamma_\flat}^*}f_\omega(C)\bigg)\cup
\bigg(\bigcup_{\sigma\in\Gamma_\flat\setminus\Gamma^{(-1)}}f_\sigma(K)\bigg)\\&&
\cup\bigg(\bigcup_{\sigma\in\Gamma^{(-1)}}f_\sigma(C)\bigg)\cup\bigg(\bigcup_{\sigma\in\Gamma^{(-1)}}\bigcup_{\tau\in\Gamma(\sigma, 1)}f_\tau(K)\bigg)\\&=&\bigg(\bigcup_{h=0}^{l(\Gamma)-1}\bigcup_{\sigma\in\Omega_h}f_\sigma(C)\bigg)\cup\bigg(\bigcup_{\omega\in\Lambda_\Gamma^*}f_\omega(C)\bigg)
\cup\bigg(\bigcup_{\sigma\in\Gamma}f_\sigma(K)\bigg).
\end{eqnarray*}
This completes the proof of the lemma.
\end{proof}
\section{Proof of Theorem \ref{mthm1}}

Let $|A|$ denote the diameter of a set $A$. Without loss of generality, we assume that $|K|=1$, so that $|f_\sigma(K)|=s_\sigma$ for every $\sigma\in\Omega^*$. Set
\begin{eqnarray*}
\underline{\eta}_r:=\min\bigg\{\min_{1\leq i\leq N}p_is_i^r,\min_{1\leq i\leq M}t_ic_i^r\bigg\}.
\end{eqnarray*}
We will need the following finite maximal antichains in $\Omega^*$:
\begin{equation}\label{s7}
\Gamma_{k,r}:=\{\sigma\in\Omega^*:p_{\sigma^-}s_{\sigma^-}^r\geq \underline{\eta}_r^k>p_\sigma c_\sigma^r\},\;k\geq 1.
\end{equation}
For simplicity of notations, we write
\begin{eqnarray*}
l_{1,k}:=l(\Gamma_{k,r}),\;\;l_{2,k}:=L(\Gamma_{k,r}),\;N_{k,r}:={\rm card}(\Gamma_{k,r}),\;\;k\geq 1.
\end{eqnarray*}
By Lemma \ref{g2}, we have
\begin{eqnarray*}
K=\bigg(\bigcup_{h=0}^{l_{1,k}-1}\bigcup_{\sigma\in\Omega_h}f_\sigma(C)\bigg)\cup\bigg(\bigcup_{\omega\in\Lambda_{\Gamma_{k,r}}^*}f_\omega(C)\bigg)
\cup\bigg(\bigcup_{\sigma\in\Gamma_{k,r}}f_\sigma(K)\bigg).
\end{eqnarray*}
Note that, $\bigcup_{\sigma\in\Gamma_{k,r}}f_\sigma(K)$ is a proper subset of $K$. We will construct suitable coverings and subsets of $K$ according to $\Gamma_{k,r}$. For convenience, we write
\begin{eqnarray}\label{s8}
\Psi_{k,r}:=\bigcup_{h=0}^{l_{1,k}-1}\Omega_h\cup\Lambda_{\Gamma_{k,r}}^*,\;k\geq 1.
\end{eqnarray}
\begin{lemma}\label{g3}
For every $\sigma\in\Psi_{k,r}$, we have
\begin{eqnarray*}
p_\sigma s_\sigma^r\geq \underline{\eta}_r^k\;\;{\rm for\;all}\;\;\sigma\in\Psi_{k,r}.
\end{eqnarray*}
\end{lemma}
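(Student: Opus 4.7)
The plan is to split $\Psi_{k,r}$ according to its definition in (\ref{s8}) and, in each case, exhibit a word $\tau \in \Gamma_{k,r}$ such that $\sigma$ is a (not necessarily strict) predecessor of $\tau^-$. Once such a $\tau$ is in hand, the defining inequality $p_{\tau^-} s_{\tau^-}^r \geq \underline{\eta}_r^k$ from (\ref{s7}), together with the multiplicativity of $p_\cdot$ and $s_\cdot$ (whose one-letter factors all lie in $(0,1]$ for $p_i$ and in $(0,1)$ for $s_i$), immediately yields
\[
p_\sigma s_\sigma^r \;\geq\; p_{\tau^-} s_{\tau^-}^r \;\geq\; \underline{\eta}_r^k.
\]

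First I would treat $\sigma \in \Omega_h$ with $0 \leq h < l_{1,k}$. Since $\Gamma_{k,r}$ is a finite maximal antichain, the infinite word $\sigma \ast 1 \ast 1 \ast \cdots$ admits a unique predecessor $\tau$ in $\Gamma_{k,r}$. Because $|\tau| \geq l_{1,k} > h = |\sigma|$, we have $\sigma \prec \tau$ and consequently $\sigma \preceq \tau^-$, so the multiplicativity argument above applies. The remaining case is $\sigma \in \Lambda_{\Gamma_{k,r}}^*$: by the definition of $\Lambda_{\Gamma_{k,r}}(\rho)$, there exists $\tau \in \Gamma^*(\sigma) \cap \Gamma_{k,r}$. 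Reading (\ref{s30}), this means $\sigma \prec \tau$ with $|\tau| \geq |\sigma|+1$, hence again $\sigma \preceq \tau^-$, and the same inequality chain closes the argument.

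There is no substantive obstacle here; the lemma is essentially a bookkeeping check that the antichain $\Gamma_{k,r}$ and the auxiliary index set $\Psi_{k,r}$ dovetail correctly. The only point requiring minor care is ensuring strict inequality $|\sigma| < |\tau|$ in both cases — which is automatic in Case 1 from $h < l_{1,k} \leq |\tau|$, and follows in Case 2 from the definition $\Gamma^*(\sigma) = \bigcup_{h\geq 1} \Gamma(\sigma,h)$ — so that $\tau^-$ is still a (non-strict) extension of $\sigma$ and the monotonicity $p_\sigma s_\sigma^r \geq p_{\tau^-} s_{\tau^-}^r$ is available.
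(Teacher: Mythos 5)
Your proof is correct and follows essentially the same route as the paper: both cases reduce to locating a word $\tau\in\Gamma_{k,r}$ with $\sigma\preceq\tau^-$ and then invoking the defining lower bound $p_{\tau^-}s_{\tau^-}^r\geq\underline{\eta}_r^k$ together with the monotonicity of $p_\cdot s_\cdot^r$ under extension. The only cosmetic difference is in the first case, where you construct a descendant $\tau\in\Gamma_{k,r}$ directly via maximality of the antichain, whereas the paper phrases the same observation as a proof by contradiction with $l(\Gamma_{k,r})<l_{1,k}$.
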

\begin{proof}
For every $0\leq h\leq l_{1,k}-1$ and every $\sigma\in\Omega_h$, we have, $p_\sigma s_\sigma^r\geq \underline{\eta}_r^k$; otherwise, we would have that $l(\Gamma_{k,r})<l_{1,k}$, a contradiction.

For every $\sigma\in\Lambda_{\Gamma_{k,r}}^*$, by the definition, there exists $\tau\in\Gamma_{k,r}$ with $\sigma\prec\tau$ and $|\tau|<|\sigma|$.
Hence, $p_\sigma s_\sigma^r\geq p_{\tau^-}s_{\tau^-}^r\geq \underline{\eta}_r^k$.
The lemma follows.
\end{proof}

As the second step of dividing $K$, for each $\sigma\in\Psi_{k,r}$, we divide $f_\sigma(C)$ into small parts $f_\sigma(C_\rho)$ by means of some finite maximal antichain $\Gamma_{k,r}(\sigma)$ in $\Phi^*$. More exactly, for each $\sigma\in\Psi_{k,r}$, we define
\begin{eqnarray}\label{s11}
\Gamma_{k,r}(\sigma):=\{\rho\in\Phi^*:p_\sigma s_\sigma^r t_{\omega^-}c_{\omega^-}^r\geq\underline{\eta}_r^k>p_\sigma s_\sigma^r t_\omega c_\omega^r\},
\end{eqnarray}
where $\omega^-:=\omega|_{|\omega|-1}$ is defined in the same way as we did for words in $\Omega^*$. Note that $C$ is the self-similar set associated with $(g_i)_{i=1}^M$. We have
\begin{eqnarray}\label{s19}
C=\bigcup_{i=1}^Mg_i(C)=\bigcup_{\rho\in\Gamma_{k,r}(\sigma)}g_\rho(C)=\bigcup_{\rho\in\Gamma_{k,r}(\sigma)} C_\rho.
\end{eqnarray}
Thus, by Lemma \ref{g2} and (\ref{s19}), we are able to divide $K$ in the following manner:
\begin{eqnarray}\label{s20}
K&=&\bigg(\bigcup_{h=0}^{l_{1,k}-1}\bigcup_{\sigma\in\Omega_h}f_\sigma(C)\bigg)\cup\bigg(\bigcup_{\sigma\in\Lambda_{\Gamma_{k,r}}^*}f_\omega(C)\bigg)
\cup\bigg(\bigcup_{\sigma\in\Gamma_{k,r}}f_\sigma(K)\bigg)\nonumber\\
&=&\bigg(\bigcup_{\sigma\in\Psi_{k,r}}\bigcup_{\rho\in\Gamma_{k,r}(\sigma)}f_\sigma(C_\rho)\bigg)\cup\bigg(\bigcup_{\sigma\in\Gamma_{k,r}}f_\sigma(K)\bigg).
\end{eqnarray}
For every $\sigma\in\Psi_{k,r}$, let $M_{k,r}(\sigma)$ denote the cardinality of $\Gamma_{k,r}(\sigma)$. We define
\begin{eqnarray*}
\phi_{k,r}:=N_{k,r}+\sum_{\sigma\in\Psi_{k,r}}M_{k,r}(\sigma).
\end{eqnarray*}
For the purpose of estimating of the upper (lower) quantization coefficient, we need to compare $\phi_{k,r}$ and $\phi_{k+1,r}$. We have
\begin{lemma}\label{g5}
There exists a constant $d_1>0$ such that
\begin{equation*}
\phi_{k,r}\leq\phi_{k+1,r}\leq d_1\phi_{k,r},\;k\geq 1.
\end{equation*}
\end{lemma}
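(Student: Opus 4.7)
The plan has two parts: a monotonicity step and a geometric-growth upper bound.

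For monotonicity I would first observe that $\Gamma_{k+1,r}$ refines $\Gamma_{k,r}$: given any $\tau\in\Gamma_{k+1,r}$, the value $p_{\tau|_j}s_{\tau|_j}^r$ descends from $1$ to $p_\tau s_\tau^r<\underline{\eta}_r^{k+1}<\underline{\eta}_r^k$ as $j$ runs over $0,\ldots,|\tau|$, so exactly one $\sigma=\tau|_j$ lies in $\Gamma_{k,r}$. By antichain disjointness this forces $N_{k+1,r}\ge N_{k,r}$, and the entirely analogous refinement inside $\Phi^*$ (comparing $\Gamma_{k,r}(\sigma)$ with $\Gamma_{k+1,r}(\sigma)$) gives $M_{k+1,r}(\sigma)\ge M_{k,r}(\sigma)$ for every $\sigma\in\Psi_{k,r}$. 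Combined with the inclusion $\Psi_{k,r}\subseteq\Psi_{k+1,r}$, which follows because Lemma \ref{g3} gives $p_\sigma s_\sigma^r\ge\underline{\eta}_r^k>\underline{\eta}_r^{k+1}$ for $\sigma\in\Psi_{k,r}$ (so $\sigma\notin\Gamma_{k+1,r}$) while extending any $\tau\in\Gamma_{k,r}$ strictly past $\sigma$ and taking a further descendant in $\Gamma_{k+1,r}$ exhibits $\sigma$ as a strict predecessor of $\Gamma_{k+1,r}$, summing the three contributions yields $\phi_{k,r}\le\phi_{k+1,r}$.

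For the upper bound, set $\overline{\eta}_r:=\max\bigl\{\max_{1\le i\le N}p_is_i^r,\,\max_{1\le i\le M}t_ic_i^r\bigr\}<1$ and fix $K_0\in\mathbb{N}$ with $\overline{\eta}_r^{K_0}\le\underline{\eta}_r^2$. The heart of the matter is the uniform finite-depth estimate: whenever $\sigma\in\Gamma_{k,r}$ and $\tau\in\Gamma_{k+1,r}$ with $\sigma\preceq\tau$, one has $|\tau|-|\sigma|\le K_0$. Indeed, directly from the defining inequalities, $p_\sigma s_\sigma^r\ge\underline{\eta}_r^{k+1}$ and $p_\tau s_\tau^r\ge\underline{\eta}_r^{k+2}$, while $p_\tau s_\tau^r\le p_\sigma s_\sigma^r\cdot\overline{\eta}_r^{|\tau|-|\sigma|}$, so $\overline{\eta}_r^{|\tau|-|\sigma|}\ge\underline{\eta}_r^2$. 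An identical estimate applied to the $\Phi^*$-antichain $\Gamma_{k+1,r}(\sigma')$ gives $|\rho|\le K_0$ for any $\rho\in\Gamma_{k+1,r}(\sigma')$ provided $p_{\sigma'}s_{\sigma'}^r\ge\underline{\eta}_r^{k+1}$, hence $M_{k+1,r}(\sigma')\le M^{K_0}$; comparing $\Gamma_{k,r}(\sigma')$ with $\Gamma_{k+1,r}(\sigma')$ in the same spirit gives $M_{k+1,r}(\sigma')\le M^{K_0}M_{k,r}(\sigma')$ for every $\sigma'\in\Psi_{k,r}$.

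These depth bounds drive the count. First, $N_{k+1,r}\le N^{K_0}N_{k,r}$. Second, $\Psi_{k+1,r}\setminus\Psi_{k,r}$ consists of words that weakly descend from some $\sigma\in\Gamma_{k,r}$ and still strictly precede some element of $\Gamma_{k+1,r}$; by the finite-depth estimate this set is contained in the union of the depth-$K_0$ subtrees below the various $\sigma\in\Gamma_{k,r}$, yielding $|\Psi_{k+1,r}\setminus\Psi_{k,r}|\le N^{K_0}N_{k,r}$ with each new $\sigma'$ contributing at most $M^{K_0}$ to the $M$-sum. Third, $\sum_{\sigma'\in\Psi_{k,r}}M_{k+1,r}(\sigma')\le M^{K_0}\sum_{\sigma'\in\Psi_{k,r}}M_{k,r}(\sigma')$. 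Summing the three contributions gives $\phi_{k+1,r}\le d_1\phi_{k,r}$ for a suitable absolute constant $d_1$. The main obstacle is the careful bookkeeping of how $\Psi_{k+1,r}$ decomposes relative to $\Psi_{k,r}\cup\Gamma_{k,r}$ and the clean lifting of the finite-depth estimate from $\Omega^*$ to $\Phi^*$; boundary cases with $p_\sigma s_\sigma^r=\underline{\eta}_r^{k+1}$ exactly are harmless since they only reduce counts.
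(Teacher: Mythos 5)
Your overall strategy is the same as the paper's: introduce $\overline{\eta}_r$, fix a depth $K_0$ (the paper's $H$, defined by $\overline{\eta}_r^H<\underline{\eta}_r$) beyond which a word's weight must drop by more than the ratio between consecutive thresholds, and use this uniform depth bound to control $N_{k+1,r}$, the new part of $\Psi_{k+1,r}$, and each $M_{k+1,r}(\sigma)$. The monotonicity argument via antichain refinement is fine and, if anything, cleaner than the paper's.

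However, the written justification of the key depth estimate is wrong. You cite $p_\sigma s_\sigma^r\ge\underline{\eta}_r^{k+1}$, $p_\tau s_\tau^r\ge\underline{\eta}_r^{k+2}$ and $p_\tau s_\tau^r\le p_\sigma s_\sigma^r\overline{\eta}_r^{|\tau|-|\sigma|}$, and conclude $\overline{\eta}_r^{|\tau|-|\sigma|}\ge\underline{\eta}_r^2$. But from these three one only gets $\overline{\eta}_r^{|\tau|-|\sigma|}\ge\underline{\eta}_r^{k+2}/p_\sigma s_\sigma^r$, and the lower bound on $p_\sigma s_\sigma^r$ makes the right side \emph{smaller}, not $\ge\underline{\eta}_r^2$. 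What you actually need is the \emph{upper} bound $p_\sigma s_\sigma^r<\underline{\eta}_r^k$, immediate from $\sigma\in\Gamma_{k,r}$, giving $\underline{\eta}_r^{k+2}\le p_\tau s_\tau^r<\underline{\eta}_r^k\overline{\eta}_r^{|\tau|-|\sigma|}$ and hence $\overline{\eta}_r^{|\tau|-|\sigma|}>\underline{\eta}_r^2$. The same omission recurs in the claim $M_{k+1,r}(\sigma')\le M^{K_0}$ for new $\sigma'\in\Psi_{k+1,r}\setminus\Psi_{k,r}$: stating only the Lemma~\ref{g3} lower bound $p_{\sigma'}s_{\sigma'}^r\ge\underline{\eta}_r^{k+1}$ does not bound the depth of $\Gamma_{k+1,r}(\sigma')$; you also need $p_{\sigma'}s_{\sigma'}^r<\underline{\eta}_r^k$, which holds because $\sigma'$ weakly descends from an element of $\Gamma_{k,r}$. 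Both fixes are routine, and with them your argument becomes a valid (and arguably tidier) variant of the paper's proof — the paper handles the new words via (f3)--(f4) by comparing $M_{k+1,r}(\sigma')$ to $M_{k+1,r}(\tau)$ for the ancestor $\tau\in\Omega_{l_{1,k}-1}$, whereas you bound $M_{k+1,r}(\sigma')$ by an absolute constant; either works.
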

\begin{proof}
For convenience, we set
\begin{eqnarray*}
\overline{\eta}_r:=\max\big\{\max_{1\leq i\leq N} p_is_i^r,\max_{1\leq i\leq M} t_ic_i^r\big\},\;\;H:=\min\{h\in\mathbb{N}:\overline{\eta}_r^h<\underline{\eta}_r\}.
\end{eqnarray*}
Then, we have the following facts.

(f1) for every $\sigma\in\Gamma_{k,r}$, we have $p_{\sigma^-}s_{\sigma^-}^r\geq\underline{\eta}_r^k>p_\sigma s_\sigma^r$. Hence, for all $\tau\in\Omega_H$,
\begin{eqnarray*}
p_{\sigma\ast\tau}s_{\sigma\ast\tau}^r<p_\sigma s_\sigma^r\underline{\eta}_r<\underline{\eta}_r^{k+1}.
\end{eqnarray*}
This implies that $N_{k,r}\leq N_{k+1,r}\leq N^{H}N_{k,r}$ and $l_{1,k}\leq l_{1,k+1}\leq l_{1,k}+H$.

(f2) Let $\sigma\in\Psi_{k,r}$ and $\rho\in\Gamma_{k,r}(\sigma)$. By the definition, we have
\begin{eqnarray*}
p_\sigma c_\sigma^rt_{\rho^-}c_{\rho^-}^r\geq \underline{\eta}_r^k>p_\sigma c_\sigma^r t_\rho c_\rho^r.
\end{eqnarray*}
Hence, for every $1\leq i\leq M$, $p_\sigma c_\sigma^r t_{\rho\ast i} c_{\rho\ast i}^r\geq \underline{\eta}_r^{k+1}$. For any $\omega\in\Phi_H$, we have
\begin{eqnarray*}
p_\sigma s_\sigma^rt_{\rho\ast\omega}c_{\rho\ast\omega}^r<p_\sigma s_\sigma^rt_\rho c_\rho^r\underline{\eta}_r<\underline{\eta}_r^{k+1}.
\end{eqnarray*}
Hence, we have $M_{k,r}(\sigma)\leq M_{k+1,r}(\sigma)\leq M^HM_{k,r}(\sigma)$.

(f3) For every $l_{1,k}\leq h\leq l_{k+1,1}-1$ and $\sigma\in\Omega_h$, there is a unique $\tau\in\Omega_{l_{1,k}-1}$ such that $\tau\prec\sigma$. It is clear that $p_\sigma s_\sigma^r<p_\tau c_\tau^r$. Therefore, for every $\rho\in\Gamma_{k+1,r}(\sigma)$, we have
\begin{eqnarray*}
p_\tau s_\tau^r t_{\rho^-} c_{\rho^-}^r\geq p_\sigma s_\sigma^r t_{\rho^-} c_{\rho^-}^r\geq \underline{\eta}_r^{k+1}>p_\sigma s_\sigma^r t_\rho c_\rho^r.
\end{eqnarray*}
This implies that, either $\rho\in\Gamma_{k+1,r}(\tau)$, or more than one descendant of $\rho$ are contained in $\Gamma_{k+1,r}(\tau)$. From this we deduce that $M_{k+1,r}(\sigma)\leq M_{k+1,r}(\tau)$. Thus, let $D:=\sum_{i=1}^{H} M^h$, using (b2), we deduce
\begin{eqnarray*}
&&\sum_{h=l_{1,k}}^{l_{1,k+1}-1}\sum_{\sigma\in\Omega_h,\tau\prec\sigma}M_{k+1,r}(\sigma)\leq \sum_{i=1}^{l_{1,k+1}-l_{1,k}} M^iM_{k+1,r}(\tau)\\&&\leq\sum_{i=1}^{H} M^i M_{k+1,r}(\tau)=D M_{k+1,r}(\tau)\leq DM^HM_{k,r}(\tau).
\end{eqnarray*}
Note that the preceding inequality holds for every $\tau\in\Omega_{l_{1,k}-1}$. Hence,
\begin{eqnarray*}
\sum_{h=l_{1,k}}^{l_{1,k+1}-1}\sum_{\sigma\in\Omega_h}M_{k+1,r}(\sigma)\leq DM^H\sum_{\tau\in\Omega_{l_{1,k}}}M_{k,r}(\tau)
\end{eqnarray*}

(f4) Let $\sigma\in\Lambda_{\Gamma_{k+1,r}}^*\setminus\Lambda_{\Gamma_{k,r}}^*$, there exists a $\tau\in\Lambda_{\Gamma_{k,r}}^*$ such that $\tau\prec\sigma$ and $|\sigma|<|\tau|+H$.
As we did in (b3), one easily gets
\begin{eqnarray*}
\sum_{\sigma\in\Lambda_{\Gamma_{k+1,r}}^*\setminus\Lambda_{\Gamma_{k,r}}^*,\tau\prec\sigma}M_{k+1,r}(\sigma)\leq D M_{k+1,r}(\tau)\leq DM^HM_{k,r}(\tau).
\end{eqnarray*}
Taking those $\sigma\in\Lambda_{\Gamma_{k+1,r}}^*\cap\Lambda_{\Gamma_{k,r}}^*$ into consideration, we obtain
\begin{eqnarray*}
\sum_{\sigma\in\Lambda_{\Gamma_{k+1,r}}^*}M_{k+1,r}(\sigma)\leq DM^H\sum_{\sigma\in\Lambda_{\Gamma_{k,r}}^*}M_{k,r}(\sigma).
\end{eqnarray*}
Let $d_0:=D\max\{M^H, N^H\}$. Combining the above analysis, we deduce
\begin{eqnarray*}
\phi_{k+1,r}&=&\sum_{h=0}^{l_{1k}-1}\sum_{\sigma\in\Omega_h}M_{k+1,r}(\sigma)+\sum_{h=l_{1,k}}^{l_{1,k+1}-1}\sum_{\sigma\in\Omega_h}M_{k+1,r}(\sigma)\\&&+N_{k+1,r}
+\sum_{\sigma\in\Lambda_{\Gamma_{k+1,r}}^*}M_{k+1,r}(\sigma)\\&\leq& d_0
\sum_{h=0}^{l_{1k}-1}\sum_{\sigma\in\Omega_h}M_{k,r}(\sigma)+d_0\sum_{\sigma\in\Omega_{l_{1,k}-1}}M_{k,r}(\sigma)\\
&&+d_0N_{k,r}+d_0\sum_{\sigma\in\Lambda_{\Gamma_{k,r}}^*}M_{k,r}(\sigma)\\
&\leq&(d_0+1)\phi_{k,r}
\end{eqnarray*}
The lemma follows by setting $d_1:=d_0+1$.
\end{proof}
\begin{lemma}\label{g7}
Let $\mu$ be as stated in Theorem \ref{mthm1}. Then we have
\begin{eqnarray*}
e^r_{\phi_{k,r},r}(\mu)\leq p_0|C|^r\sum_{\sigma\in\Psi_{k,r}}\sum_{\rho\in\Gamma_{k,r}(\sigma)}p_\sigma s_\sigma^r t_\omega c_\omega^r+\sum_{\sigma\in\Gamma_{k,r}}p_\sigma s_\sigma^r.
\end{eqnarray*}
\end{lemma}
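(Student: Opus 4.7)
The plan is to construct an explicit test set $\alpha$ of cardinality at most $\phi_{k,r}$ using the decomposition (\ref{s20}), and then bound the integral $\int d(x,\alpha)^r d\mu$ piecewise via diameter estimates.

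\textbf{Step 1: choice of $\alpha$.} For each $\sigma\in\Psi_{k,r}$ and each $\rho\in\Gamma_{k,r}(\sigma)$, pick a point $a_{\sigma,\rho}\in f_\sigma(C_\rho)$. For each $\sigma\in\Gamma_{k,r}$, pick a point $b_\sigma\in f_\sigma(K)$. Let
\[
\alpha:=\{a_{\sigma,\rho}:\sigma\in\Psi_{k,r},\,\rho\in\Gamma_{k,r}(\sigma)\}\cup\{b_\sigma:\sigma\in\Gamma_{k,r}\}.
\]
By the definitions of $N_{k,r}$ and $M_{k,r}(\sigma)$, we have ${\rm card}(\alpha)\le N_{k,r}+\sum_{\sigma\in\Psi_{k,r}}M_{k,r}(\sigma)=\phi_{k,r}$, so $\alpha\in\mathcal D_{\phi_{k,r}}$.

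\textbf{Step 2: pointwise diameter bound and subadditivity.} By (\ref{s20}), the family
\[
\mathcal F:=\{f_\sigma(C_\rho):\sigma\in\Psi_{k,r},\,\rho\in\Gamma_{k,r}(\sigma)\}\cup\{f_\sigma(K):\sigma\in\Gamma_{k,r}\}
\]
covers $K$. For any $x\in K$, any member $A\in\mathcal F$ containing $x$ also contains one of the chosen points of $\alpha$; hence $d(x,\alpha)\le |A|$. Using the simple bound $\mathbf 1_K(x)\le\sum_{A\in\mathcal F}\mathbf 1_A(x)$ (valid whether or not the sets overlap), I obtain
\[
\int d(x,\alpha)^r\,d\mu(x)\le\sum_{\sigma\in\Psi_{k,r}}\sum_{\rho\in\Gamma_{k,r}(\sigma)}|f_\sigma(C_\rho)|^r\mu(f_\sigma(C_\rho))+\sum_{\sigma\in\Gamma_{k,r}}|f_\sigma(K)|^r\mu(f_\sigma(K)).
\]

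\textbf{Step 3: evaluating diameters and masses.} Since $f_\sigma$ is a similitude of ratio $s_\sigma$ and $g_\rho$ of ratio $c_\rho$, we have $|f_\sigma(C_\rho)|^r=s_\sigma^r c_\rho^r|C|^r$, and $|f_\sigma(K)|^r=s_\sigma^r$ by the normalization $|K|=1$. Lemma \ref{g1} gives $\mu(f_\sigma(C_\rho))=p_0 p_\sigma t_\rho$, and Lemma \ref{olsen}(d) gives $\mu(f_\sigma(K))=p_\sigma$. Substituting produces exactly
\[
\int d(x,\alpha)^r\,d\mu(x)\le p_0|C|^r\sum_{\sigma\in\Psi_{k,r}}\sum_{\rho\in\Gamma_{k,r}(\sigma)}p_\sigma s_\sigma^r t_\rho c_\rho^r+\sum_{\sigma\in\Gamma_{k,r}}p_\sigma s_\sigma^r.
\]
Since $\alpha\in\mathcal D_{\phi_{k,r}}$, the definition (\ref{quanerror}) gives $e^r_{\phi_{k,r},r}(\mu)\le\int d(x,\alpha)^r d\mu$, and the lemma follows.

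The argument is almost entirely bookkeeping; the only point that requires care is Step 2, where one must avoid worrying about whether the decomposition (\ref{s20}) is a disjoint union. Because we only need an upper bound, the crude majorization $\mathbf 1_K\le\sum_{A\in\mathcal F}\mathbf 1_A$ suffices and any overcounting on intersections is harmless. Everything else is a direct application of Lemmas \ref{olsen} and \ref{g1} together with the scaling property of similitudes.
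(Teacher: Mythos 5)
Your proposal is correct and follows essentially the same route as the paper: pick one representative point in each piece of the decomposition (\ref{s20}), use the covering to bound the integral by a sum of per-piece integrals, bound each integrand by the diameter of the piece, and then substitute the mass and diameter formulas from Lemmas \ref{olsen}(d) and \ref{g1}. You have in fact been slightly more careful than the paper, which writes $a_\rho\in C_\rho$ where the diameter bound actually requires a point of $f_\sigma(C_\rho)$; your choice $a_{\sigma,\rho}\in f_\sigma(C_\rho)$ is the right one.
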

\begin{proof}
For each $\sigma\in\Psi_{k,r}$ and $\rho\in\Gamma_{k,r}(\sigma)$, we choose an arbitrary point $a_\rho\in C_\rho=g_\rho(C)$; for every $\sigma\in\Gamma_{k,r}$, let $a_\sigma$ be an arbitrary point of $f_\sigma(K)$. We denote by $\alpha$ the set of all these points. Then ${\rm card}(\alpha)\leq\varphi_{k,r}$. Hence, using Lemma \ref{olsen} (d), we deduce
\begin{eqnarray*}
e^r_{\phi_{k,r},r}(\mu)&\leq&\sum_{\sigma\in\Psi_{k,r}}\sum_{\rho\in\Gamma_{k,r}(\sigma)}\int_{f_\sigma(C_\rho)}d(x,a_\rho)^rd\mu(x)\\&&\;\;\;\;+
\sum_{\sigma\in\Gamma_{k,r}}\int_{f_\sigma(K)}d(x,a_\sigma)^rd\mu(x)\\&\leq&\sum_{\sigma\in\Psi_{k,r}}\sum_{\rho\in\Gamma_{k,r}(\sigma)}
\mu(f_\sigma(C_\rho))|f_\sigma(C_\rho)|^r\\&&\;\;\;\;+\sum_{\sigma\in\Gamma_{k,r}}\mu(f_\sigma(K))|f_\sigma(K)|^r\\&=&\sum_{\sigma\in\Psi_{k,r}}\sum_{\rho\in\Gamma_{k,r}(\sigma)}
p_0p_\sigma t_\rho s_\sigma^rc_\rho^r|C|^r+\sum_{\sigma\in\Gamma_{k,r}}p_\sigma s_\sigma^r.
\end{eqnarray*}
This completes the proof of the lemma.
\end{proof}

In the following, we are devoted to establishing a lower estimate for the quantization error of $\mu$. Compared with the upper one that we have just obtained, we need to make more effort by choosing a suitable subset of $K$ which consists of pairwise disjoint compact sets, so that the techniques in \cite{Zhu:08} is applicable. We have to take care of the OSC both for $(f_i)_{i=1}^N$ and for $(g_i)_{i=1}^M$.
\begin{lemma}
There exists a bounded open set $W$ such that $C\subset W$ and
\begin{eqnarray}\label{s23}
g_i(W)\subset W,\;\;{\rm cl}(W)\cap f_i({\rm cl}(U))=\emptyset\;\; {\rm for\; all} \;\;1\leq i\leq N.
\end{eqnarray}
\end{lemma}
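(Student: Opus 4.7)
The plan is to take $W$ to be a sufficiently small open $\epsilon$-neighbourhood of $C$, namely $W:=\{x\in\mathbb{R}^q:d(x,C)<\epsilon\}$. The crucial observation is that any such neighbourhood is automatically invariant under each $g_i$, with no restriction on $\epsilon$: since $C=\bigcup_{i=1}^M g_i(C)$ we have $g_i(C)\subset C$, so for any $x\in W$ the similitude property gives
\[
d(g_i(x),C)\le d\bigl(g_i(x),g_i(C)\bigr)=c_i\,d(x,C)<c_i\epsilon\le\epsilon,
\]
whence $g_i(W)\subset W$ for every $1\le i\le M$. Thus $\epsilon$ is only constrained by the second requirement.

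To arrange the disjointness, I would invoke hypothesis (A4), which furnishes $C\cap f_i({\rm cl}(U))=\emptyset$ for all $1\le i\le N$. Since $C$ and each $f_i({\rm cl}(U))$ are compact and disjoint, the distances $\delta_i:=d(C,f_i({\rm cl}(U)))$ are strictly positive; taking $\delta:=\min_{1\le i\le N}\delta_i$ and any $\epsilon\in(0,\delta/2)$, the closed set ${\rm cl}(W)\subset\{x:d(x,C)\le\epsilon\}$ stays at distance at least $\delta/2$ from each $f_i({\rm cl}(U))$, yielding the required separation. Boundedness of $W$ is immediate from compactness of $C$, as is the inclusion $C\subset W$.

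There is essentially no serious obstacle here: the lemma is really the combination of two elementary facts---that $\epsilon$-neighbourhoods of a self-similar attractor are always preserved by its defining contractions, and that finitely many pairwise disjoint compact sets can be separated by an open neighbourhood. The only substantive input used is the hypothesis (A4) supplied by the IOSC.
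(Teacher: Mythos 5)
Your proof is correct and follows essentially the same approach as the paper: both take $W$ to be a small open $\epsilon$-neighbourhood of $C$, use the similitude property to get $g_i$-invariance, and use (A4) together with compactness to make $\epsilon$ small enough for the separation from the $f_i({\rm cl}(U))$. You are slightly more careful in explicitly handling ${\rm cl}(W)$ rather than just $W$, which the paper glosses over, but the argument is the same.
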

\begin{proof}
By the assumption (A4), $C\cap f_i(\rm cl (U))=\emptyset$ for every $1\leq i\leq N$. Set
\begin{equation*}
\epsilon_0:=\min_{1\leq i\leq N}d(C, f_i(\rm cl (U))).
\end{equation*}
Let $B(x,\epsilon)$ denote the open ball of $\epsilon$ which is centered at $x$. We define
\begin{equation*}
W:=\bigcup_{x\in C}B(x,\epsilon_0/2).
\end{equation*}
Then $W$ is an open set. One can see that $W\cap f_i(\rm cl(U))=\emptyset$ for every $1\leq i\leq N$. Note that $C=\bigcup_{i=1}^M g_i(C)$. For every $x\in C$, we have $g_i(x)\in g_i(C)\subset C$. Hence,
\begin{eqnarray*}
g_i(B(x,\epsilon_0/2))=B(g_i(x),c_i\epsilon_0/2)\subset B(g_i(x),\epsilon_0/2)\subset W.
\end{eqnarray*}
This implies that $g_i(W)\subset W$. Thus, the open set $W$ satisfies (\ref{s23}).
\end{proof}

Let $A^c$ denote the complement of a set $A\subset\mathbb{R}^q$. We set $\delta_0:=d(C,W^c)$.
Let $U$ be the open set from the IOSC. Let $J$ be the same as in \cite[Theorem 3.2]{Gr:95} (cf. \cite{Schief:94}), i.e.,
$J$ is a nonempty compact set satisfying
\begin{eqnarray*}
J={\rm cl}({\rm int}(J)); \;\;{\rm int}(J)\cap C\neq\emptyset;\;\;
g_i(J)\subset J,\;1\leq j\leq M; \\g_i({\rm int}(J))\cap g_j({\rm int}(J))=\emptyset,\;\;1\leq i\neq j\leq M.
\end{eqnarray*}
\begin{lemma}\label{g9}
Let $V:={\rm int}(J)\cap U\cap W$. There exists a $\tau^{(0)}\in\Omega^*\setminus\{\theta\}$ such that $f_{\tau^{(0)}}(K)\subset U$; there exists a $\rho^{(0)}\in\Phi^*\setminus\{\theta\}$ such that $C_{\rho^{(0)}}\subset V$.
\end{lemma}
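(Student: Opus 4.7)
The plan is to produce $\tau^{(0)}$ and $\rho^{(0)}$ by the same standard device: shrink a cylinder of the relevant IFS until it lies inside an open ball contained in the target open set. The target is $U$ for $\tau^{(0)}$ and $V={\rm int}(J)\cap U\cap W$ for $\rho^{(0)}$.

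First I would record the auxiliary inclusion $E\subseteq K$, which is not stated explicitly in the paper but which I want to use. Iterating (\ref{attractingmeasure}) gives $f_\sigma(K)\subseteq K$ for every $\sigma\in\Omega^*$, and for any $\tau\in\Omega^{\mathbb{N}}$ the nested compact sets $f_{\tau|_n}(K)\subseteq K$ have diameters $s_{\tau|_n}|K|\to 0$, so they shrink to the address projection $\pi(\tau)\in E$. Since $K$ is closed, $\pi(\tau)\in K$, whence $E\subseteq K$.

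For the first claim, I would use (A3) to pick $x\in E\cap U$ together with $\varepsilon>0$ such that $B(x,\varepsilon)\subseteq U$. From $E=\bigcup_{\sigma\in\Omega_n}f_\sigma(E)$ I then choose $n\geq 1$ so large that $\max_{\sigma\in\Omega_n}s_\sigma|K|<\varepsilon$, and I pick $\tau^{(0)}\in\Omega_n$ with $x\in f_{\tau^{(0)}}(E)\subseteq f_{\tau^{(0)}}(K)$. Then $f_{\tau^{(0)}}(K)$ contains $x$ and has diameter less than $\varepsilon$, so $f_{\tau^{(0)}}(K)\subseteq B(x,\varepsilon)\subseteq U$.

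For the second claim, I would first observe that $C_\rho\subseteq U\cap W$ holds automatically for every $\rho\in\Phi^*$: the identity $C=\bigcup_{i=1}^M g_i(C)$ forces $g_\rho(C)\subseteq C$, while $C\subseteq U$ holds by (A3) and $C\subseteq W$ holds by the very construction of $W$. So only the condition $C_{\rho^{(0)}}\subseteq{\rm int}(J)$ has to be arranged. I would pick $y\in{\rm int}(J)\cap C$ (non-empty by the choice of $J$) and $\delta>0$ with $B(y,\delta)\subseteq{\rm int}(J)$, use $C=\bigcup_{\rho\in\Phi_n}C_\rho$, and choose $n\geq 1$ large enough that $\max_{\rho\in\Phi_n}c_\rho|C|<\delta$. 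Any $\rho^{(0)}\in\Phi_n$ with $y\in C_{\rho^{(0)}}$ then satisfies $C_{\rho^{(0)}}\subseteq B(y,\delta)\subseteq{\rm int}(J)$, giving $C_{\rho^{(0)}}\subseteq V$ as required.

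I do not expect any serious obstacle; the only slightly delicate preliminary is the inclusion $E\subseteq K$, which is what legitimises starting the cylinder-shrinking argument from a point of $E$ while concluding about an image of $K$. Everything else is a direct application of self-similarity and contractivity.
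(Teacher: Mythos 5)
Your proof is correct. For the first claim you follow the same cylinder-shrinking device as the paper, but you supply a preliminary step the paper leaves implicit: the inclusion $E\subseteq K$, which is what lets you start from a point of $E\cap U$ (guaranteed by (A3)) and conclude something about $f_{\tau^{(0)}}(K)$. The paper merely records $K\cap U\neq\emptyset$ before choosing $\tau^{(0)}$, yet a generic point of $K\setminus E$ (for instance one lying in some $f_\sigma(C)$) is not approached by shrinking images $f_{\tau|_n}(K)$; using a point of $E$ together with $E\subseteq K$ is the right way to make the step rigorous, and your derivation of $E\subseteq K$ from $f_i(K)\subseteq K$, nestedness, and completeness is the standard one. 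For the second claim you take a genuinely different route. The paper argues via measure: Lemma~\ref{olsen}(c) and (\ref{attractingmeasure}) give $\nu(U)=1$, \cite[Lemma~3.3]{Gr:95} gives $\nu({\rm int}(J))=1$, and $C\subseteq W$ gives $\nu(W)=1$, so $\nu(V)=1$ and $V$ meets ${\rm supp}(\nu)=C$; a ball in the open set $V$ around such a point is then shrunk as usual. You instead observe that $C_\rho=g_\rho(C)\subseteq C\subseteq U\cap W$ automatically, so only $C_{\rho^{(0)}}\subseteq{\rm int}(J)$ needs arranging, and the defining property ${\rm int}(J)\cap C\neq\emptyset$ of $J$ supplies the base point. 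Your version is purely geometric and avoids the appeal to Graf's $\nu({\rm int}(J))=1$; the paper's version is a little shorter to state but leans on that nontrivial measure-theoretic input. Both reach exactly the required conclusion, including $\tau^{(0)},\rho^{(0)}\neq\theta$ since you take $n\geq 1$.
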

\begin{proof}
By (A3), we have, $K\cap U\neq\emptyset$. So we may choose a word $\tau^{(0)}\in\Omega^*$ such that $f_{\tau^{(0)}}(K)\subset U$.
By Lemma \ref{olsen} (c), we have, $\mu(U^c)=0$. Thus, by (\ref{attractingmeasure}), we know that $\nu(U^c)=0$, i.e., $\nu(U)=1$. According to \cite[Lemma 3.3]{Gr:95}, we have, $\nu(\rm int(J))=1$. Note that $\nu(W)=\nu(C)=1$. Hence,
\begin{equation*}
\nu(V)=\nu({\rm int}(J)\cap U\cap W)=1.
 \end{equation*}
As a result, we have that $V\cap C\neq\emptyset$. Thus, we may choose a non-empty word $\rho^{(0)}\in\Phi^*$ such that $C_{\rho^{(0)}}\subset V$.  We set
\begin{equation}\label{s24}
\delta_1:=d(f_{\tau^{(0)}}(K),U^c).,\;\;\delta_2:=d(C_{\rho^{(0)}},V^c).
\end{equation}
\end{proof}

For each $k\geq 1$, let $\Gamma_{k,r}$ and $\Psi_{k,r}$ be as defined in (\ref{s7}) and (\ref{s8}). Let $\tau^{(0)}$ and $\rho^{(0)}$ be as chosen above.
We define
\begin{eqnarray*}\label{s10}
G_{k,r}:=\bigg(\bigcup_{\sigma\in\Psi_{k,r}}\bigcup_{\rho\in\Gamma_{k,r}(\sigma)}f_{\sigma\ast\tau^{(0)}}(C_{\rho\ast\rho^{(0)}})\bigg)
\cup\bigg(\bigcup_{\sigma\in\Gamma_{k,r}}f_{\sigma\ast\tau^{(0)}}(K)\bigg).
\end{eqnarray*}
For every $x\in G_{k,r}$ and $\epsilon>0$, by Lemma \ref{g1} one can see that $\mu(B(x,\epsilon))>0$. Hence, we have $K\supset G_{k,r}$ for every $k\geq 1$. We will use $G_{k,r}$ to obtain a lower bound for the quantization error of $\mu$.
For convenience, we write
\begin{eqnarray*}
\mathcal{F}_{k,r}:=\big\{f_{\sigma\ast\tau^{(0)}}(C_{\rho\ast\rho^{(0)}}):\rho\in\Gamma_{k,r}(\sigma),\sigma\in\Psi_{k,r}\big\}
\cup\big\{f_{\sigma\ast\tau^{(0)}}(K)\big\}_{\sigma\in\Gamma_{k,r}}.
\end{eqnarray*}

Let $\delta_1,\delta_2$ be as defined in (\ref{s24}) and let $\tau^{(0)}$ be the same as in Lemma \ref{g9}. Define
\begin{equation}\label{s34}
\delta_3:=d(C,U^c)\;({\rm see}\;(A3));\;\delta:=\min\{\delta_0,\delta_1,\delta_2,\delta_3\}.
\end{equation}

Next, we give an estimate of the distance between every pair $A_1,A_2$ of sets in $\mathcal{F}_{k,r}$. We will show that, For every pair $A_1,A_2\in\mathcal{F}_{k,r}$, we have
\begin{equation}\label{s12}
d(A_1,A_2)\geq\delta\max\{|A_1|,|A_2|\}.
\end{equation}
For the reader's convenience, we divide the proof of (\ref{s12}) into four lemmas according to four distinct cases.
\begin{lemma}
For every $\sigma\in\Psi_{k,r}$ and $\rho,\omega\in\Gamma_{k,r}(\sigma)$ with $\rho\neq\omega$, we have
\begin{eqnarray*}
&&d(f_{\sigma\ast\tau^{(0)}}(C_{\rho\ast\rho^{(0)}}),f_{\sigma\ast\tau^{(0)}}(C_{\omega\ast\rho^{(0)}}))
\\&&\;\;\geq\delta_2\max\{|f_{\sigma\ast\tau^{(0)}}(C_{\omega\ast\rho^{(0)}})|,|f_{\sigma\ast\tau^{(0)}}(C_{\rho\ast\rho^{(0)}})|\}.
\end{eqnarray*}
\end{lemma}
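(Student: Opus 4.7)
Since $f_{\sigma\ast\tau^{(0)}}$ is a similitude with ratio $s_{\sigma\ast\tau^{(0)}}$, both sides of the desired inequality scale by the same factor, so it suffices to prove
\[
d(C_{\rho\ast\rho^{(0)}},C_{\omega\ast\rho^{(0)}})\;\ge\;\delta_2\,\max\bigl\{|C_{\rho\ast\rho^{(0)}}|,\,|C_{\omega\ast\rho^{(0)}}|\bigr\}.
\]
Writing $C_{\rho\ast\rho^{(0)}}=g_\rho(C_{\rho^{(0)}})$ and $C_{\omega\ast\rho^{(0)}}=g_\omega(C_{\rho^{(0)}})$, the plan is to show that the images $g_\rho(V)$ and $g_\omega(V)$ are disjoint whenever $\rho,\omega$ are incomparable, and then exploit the buffer $\delta_2=d(C_{\rho^{(0)}},V^c)$ of $C_{\rho^{(0)}}$ inside $V$.

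Since $\rho,\omega$ both lie in the antichain $\Gamma_{k,r}(\sigma)\subset\Phi^*$ and are distinct, they are incomparable. Let $\pi$ be their longest common prefix, so $\rho=\pi\ast i\ast\rho'$ and $\omega=\pi\ast j\ast\omega'$ with $i\neq j$. Using that $g_l(\mathrm{int}(J))\subset\mathrm{int}(J)$ for every $l$ (because $g_l$ is open and $g_l(J)\subset J$), iteration gives $g_{\rho'}(\mathrm{int}(J))\subset\mathrm{int}(J)$ and $g_{\omega'}(\mathrm{int}(J))\subset\mathrm{int}(J)$; combined with the Schief disjointness $g_i(\mathrm{int}(J))\cap g_j(\mathrm{int}(J))=\emptyset$ and injectivity of $g_\pi$, this yields
\[
g_\rho(\mathrm{int}(J))\cap g_\omega(\mathrm{int}(J))=\emptyset.
\]
Since $V\subset\mathrm{int}(J)$, we conclude $g_\rho(V)\cap g_\omega(V)=\emptyset$, i.e.\ $g_\omega(V)\subset(g_\rho(V))^c$.

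Now $C_{\rho^{(0)}}\subset V$ together with $d(C_{\rho^{(0)}},V^c)=\delta_2$ and the similitude property of $g_\rho$ give $d(g_\rho(C_{\rho^{(0)}}),(g_\rho(V))^c)=c_\rho\delta_2$; hence
\[
d(C_{\rho\ast\rho^{(0)}},C_{\omega\ast\rho^{(0)}})\;\ge\;d\bigl(g_\rho(C_{\rho^{(0)}}),\,g_\omega(V)\bigr)\;\ge\;c_\rho\delta_2,
\]
and the symmetric argument with the roles of $\rho,\omega$ exchanged shows the bound is also $\ge c_\omega\delta_2$. Therefore the distance is at least $\delta_2\max\{c_\rho,c_\omega\}$. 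Because $|K|=1$ forces $|C|\le 1$ and $c_{\rho^{(0)}}\le 1$, we have $\max\{|C_{\rho\ast\rho^{(0)}}|,|C_{\omega\ast\rho^{(0)}}|\}=c_{\rho^{(0)}}|C|\max\{c_\rho,c_\omega\}\le\max\{c_\rho,c_\omega\}$, which closes the estimate; reinserting the similitude $f_{\sigma\ast\tau^{(0)}}$ completes the proof.

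The only real subtlety is the disjointness step: one might be tempted to reduce matters to subsets of $V$ directly, but the IOSC controls $f_i$, not $g_i$, so $g_{\rho'}(V)\subset V$ need not hold. The remedy used above is to stay in the Schief set $\mathrm{int}(J)$ (for disjointness of the nested images) and only invoke the buffer $\delta_2$ at the \emph{outermost} factor $g_\rho$, which is where the similitude scaling cleanly turns $\delta_2$ into $c_\rho\delta_2$.
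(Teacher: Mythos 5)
Your proof is correct and follows essentially the same route as the paper's: both reduce by the similitude $f_{\sigma\ast\tau^{(0)}}$, both exploit the Schief open set ${\rm int}(J)$ to get disjointness of $g_\rho({\rm int}(J))$ and $g_\omega({\rm int}(J))$ for incomparable $\rho,\omega$, and both turn the buffer $\delta_2=d(C_{\rho^{(0)}},V^c)$ into $c_\rho\delta_2$ (and symmetrically $c_\omega\delta_2$) by similarity. The paper factors out the common prefix $\omega|_{h-1}$ explicitly and then uses the disjointness at the first differing symbol, while you state the disjointness of $g_\rho({\rm int}(J))$ and $g_\omega({\rm int}(J))$ directly for the full words; this is only a presentational difference, and the mathematical content is identical.
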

\begin{proof}
Clearly, $\rho,\omega$ are incomparable. Let $h:=\min\{j:\rho_h\neq\omega_h\}$. We write
\begin{equation*}
\omega=\omega|_{h-1}\ast\omega_h\ast\widetilde{\omega},\;\rho=\omega|_{h-1}\ast\rho_h\ast\widetilde{\rho}.
\end{equation*}
By the similarity of $f_{\sigma\ast\tau^{(0)}}$ and $g_\rho,g_\omega$, we deduce
\begin{eqnarray*}
d(f_{\sigma\ast\tau^{(0)}}(C_{\rho\ast\rho^{(0)}}),f_{\sigma\ast\tau^{(0)}}(C_{\omega\ast\rho^{(0)}}))=s_{\sigma\ast\tau^{(0)}} c_{\omega|_{h-1}}d(C_{\omega_h\ast\widetilde{\omega}\ast\rho^{(0)}},C_{\rho_h\ast\widetilde{\rho}\ast\rho^{(0)}}).
\end{eqnarray*}
Note that $C_{\omega_h\ast\widetilde{\omega}\ast\rho^{(0)}}\subset g_{\omega_h\ast\widetilde{\omega}}(V)\subset g_{\omega_h\ast\widetilde{\omega}}(\rm int(J))$ and $C_{\rho_h\ast\widetilde{\rho}\ast\rho^{(0)}}\subset g_{\rho_h\ast\widetilde{\rho}}(V)\subset g_{\rho_h\ast\widetilde{\rho}}(\rm int(J))$. By the OSC for $(g_i)_{i=1}^M$, we have
\begin{eqnarray*}
g_{\omega_h\ast\widetilde{\omega}}(\rm int (J))\cap g_{\rho_h\ast\widetilde{\rho}}(\rm int(J))\subset g_{\omega_h}(\rm int(J))\cap g_{\rho_h}(\rm int(J))=\emptyset.
\end{eqnarray*}
Using the above facts, we further deduce
\begin{eqnarray*}
&&d(f_{\sigma\ast\tau^{(0)}}(C_{\rho\ast\rho^{(0)}}),f_{\sigma\ast\tau^{(0)}}(C_{\omega\ast\rho^{(0)}}))\\&&\geq s_{\sigma\ast\tau^{(0)}} c_{\omega|_{h-1}}\max\{d(C_{\omega_h\ast\widetilde{\omega}\ast\rho^{(0)}}, g_{\omega_h\ast\widetilde{\omega}}(V)^c),d(C_{\rho_h\ast\widetilde{\rho}\ast\rho^{(0)}}, g_{\rho_h\ast\widetilde{\rho}}(V)^c)\}\\&&=s_{\sigma\ast\tau^{(0)}} c_{\omega|_{h-1}}\max\{d(C_{\omega_h\ast\widetilde{\omega}\ast\rho^{(0)}}, g_{\omega_h\ast\widetilde{\omega}}(V^c)),d(C_{\rho_h\ast\widetilde{\rho}\ast\rho^{(0)}}, g_{\rho_h\ast\widetilde{\rho}}(V^c))\}\\
&&\geq s_{\sigma\ast\tau^{(0)}} c_{\omega|_{h-1}}\max\{c_{\omega_h\ast\widetilde{\omega}},c_{\rho_h\ast\widetilde{\rho}}\}\delta_2\\&&
=s_{\sigma\ast\tau^{(0)}} \max\{c_\omega,c_\rho\}\delta_2\\
&&\geq\delta_2\max\{|f_{\sigma\ast\tau^{(0)}}(C_{\omega\ast\rho^{(0)}})|,|f_{\sigma\ast\tau^{(0)}}(C_{\rho\ast\rho^{(0)}})|\}.
\end{eqnarray*}
Here we have used the fact that $|C|\leq|K|=1$. The lemma follows.
\end{proof}
\begin{lemma}
Let $\sigma,\tau\in\Psi_{k,r},\sigma\neq\tau$ and $\rho\in\Gamma_{k,r}(\sigma),\omega\in\Gamma_{k,r}(\tau)$. We have
\begin{eqnarray*}
&&d(f_{\sigma\ast\tau^{(0)}}(C_{\rho\ast\rho^{(0)}}),f_{\tau\ast\tau^{(0)}}(C_{\omega\ast\rho^{(0)}}))\\&&\geq \delta\max\{|f_{\sigma\ast\tau^{(0)}}(C_{\rho\ast\rho^{(0)}})|,|f_{\tau\ast\tau^{(0)}}(C_{\omega\ast\rho^{(0)}}))|\}.
\end{eqnarray*}
\end{lemma}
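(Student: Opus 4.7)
The plan is to set $A_{1}:=f_{\sigma\ast\tau^{(0)}}(C_{\rho\ast\rho^{(0)}})$, $A_{2}:=f_{\tau\ast\tau^{(0)}}(C_{\omega\ast\rho^{(0)}})$ and reduce the problem to the concatenated words $\hat{\sigma}:=\sigma\ast\tau^{(0)}$, $\hat{\tau}:=\tau\ast\tau^{(0)}$ in $\Omega^{*}$. WLOG $|\sigma|\leq|\tau|$; the distinct pair $(\hat{\sigma},\hat{\tau})$ then falls into one of two sub-cases: (I) $\hat{\sigma}$ and $\hat{\tau}$ are incomparable in $\Omega^{*}$, or (II) $\hat{\sigma}\prec\hat{\tau}$ strictly. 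Note that $\sigma,\tau$ incomparable forces (I), whereas $\sigma\prec\tau$ lands in (I) or (II) according as $\tau^{(0)}$ does or does not line up with the overhang $\tilde{\tau}$ defined by $\tau=\sigma\ast\tilde{\tau}$. In each sub-case I produce a bound $d(A_{1},A_{2})\geq \delta'\max\{|A_{1}|,|A_{2}|\}$ with $\delta'\in\{\delta_{3},\delta_{0}\}\geq \delta$.

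In sub-case (I), let $h:=\min\{j:\hat{\sigma}_{j}\neq\hat{\tau}_{j}\}$ and $\pi:=\hat{\sigma}|_{h-1}$. Assumption (A2) gives $f_{\hat{\sigma}_{h}}(U)\cap f_{\hat{\tau}_{h}}(U)=\emptyset$; as $f_{\hat{\sigma}_{h}}(U)^{c}$ is closed, this upgrades to $f_{\hat{\tau}_{h}}(\mathrm{cl}(U))\subset f_{\hat{\sigma}_{h}}(U)^{c}$. Applying the similitude $f_{\pi}$ and iterating (A1) to absorb the tail letters of $\hat{\tau}$ into $\mathrm{cl}(U)$ yields $A_{2}\subset f_{\hat{\tau}}(K)\subset f_{\pi\ast\hat{\tau}_{h}}(\mathrm{cl}(U))\subset f_{\hat{\sigma}}(U)^{c}$. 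On the other hand $A_{1}\subset f_{\hat{\sigma}}(C)$, and since $C\subset U$ with $d(C,U^{c})=\delta_{3}$ by (A3)--(A4), we have $d(A_{1},f_{\hat{\sigma}}(U)^{c})\geq s_{\hat{\sigma}}\delta_{3}$, whence $d(A_{1},A_{2})\geq s_{\hat{\sigma}}\delta_{3}$. Swapping the roles of $\hat{\sigma}$ and $\hat{\tau}$ yields also $d(A_{1},A_{2})\geq s_{\hat{\tau}}\delta_{3}$, and combined with $|A_{1}|\leq s_{\hat{\sigma}}$, $|A_{2}|\leq s_{\hat{\tau}}$ (using $|C|\leq |K|=1$), sub-case (I) closes with $\delta'=\delta_{3}$.

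For sub-case (II), write $\hat{\tau}=\hat{\sigma}\ast\eta$ with $\eta\neq\theta$, and apply $f_{\hat{\sigma}}^{-1}$. Then $C_{\rho\ast\rho^{(0)}}\subset C\subset W$, whereas $f_{\eta}(C_{\omega\ast\rho^{(0)}})\subset f_{\eta}(C)\subset f_{\eta_{1}}(\mathrm{cl}(U))$ (using $C\subset\mathrm{cl}(U)$ and (A1) iteratively). The separation built into $W$---namely $\mathrm{cl}(W)\cap f_{i}(\mathrm{cl}(U))=\emptyset$ from (\ref{s23})---forces $f_{\eta_{1}}(\mathrm{cl}(U))\subset W^{c}$, so $d(f_{\hat{\sigma}}^{-1}(A_{1}),f_{\hat{\sigma}}^{-1}(A_{2}))\geq d(C,W^{c})=\delta_{0}$. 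Rescaling by $s_{\hat{\sigma}}$, and noting that $\hat{\sigma}\prec\hat{\tau}$ forces $\max\{|A_{1}|,|A_{2}|\}\leq s_{\hat{\sigma}}$, sub-case (II) closes with $\delta'=\delta_{0}$.

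The main obstacle is recognising that the natural case split on whether $\sigma,\tau$ themselves are comparable is the wrong one: when $\sigma\prec\tau$ the appended copies of $\tau^{(0)}$ can still leave $\hat{\sigma},\hat{\tau}$ incomparable (precisely when $\tilde{\tau}_{1}\neq\tau^{(0)}_{1}$). The genuinely new situation is sub-case (II), and it is exactly this case that needs the extra IOSC clause $C\cap f_{i}(\mathrm{cl}(U))=\emptyset$ from (A4), which is operationalised through the auxiliary neighborhood $W$ and the constant $\delta_{0}$ baked into $\delta$.
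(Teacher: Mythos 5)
Your proof is correct and its overall strategy matches the paper's, but the case decomposition is noticeably cleaner. The paper splits on comparability and lengths of $\sigma,\tau$ directly (Case 1: $|\sigma|=|\tau|$; Case 2a: $|\sigma|<|\tau|$, $\sigma\nprec\tau$; Case 2b with two further sub-cases according as $\tau^{(0)}\prec\widetilde\tau\ast\tau^{(0)}$ or not), whereas you observe that the correct objects to compare are the concatenations $\hat\sigma=\sigma\ast\tau^{(0)}$ and $\hat\tau=\tau\ast\tau^{(0)}$. The dichotomy ``$\hat\sigma,\hat\tau$ incomparable'' vs.\ ``$\hat\sigma\prec\hat\tau$'' absorbs the paper's Cases 1, 2a, and 2b(i) into your sub-case (I), leaving the genuinely different phenomenon --- when the appended $\tau^{(0)}$ lines up with the overhang --- as sub-case (II), exactly the paper's Case 2b(ii). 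A minor difference: in your sub-case (I) you bound $d(A_1,f_{\hat\sigma}(U)^c)$ from $A_1\subset f_{\hat\sigma}(C)$ using $\delta_3=d(C,U^c)$, whereas the paper's Cases 1 and 2a use $A_1\subset f_{\sigma\ast\tau^{(0)}}(K)\subset f_\sigma(U)$ together with $\delta_1=d(f_{\tau^{(0)}}(K),U^c)$ (giving a slightly sharper numerical bound, $\max\{s_\sigma,s_\tau\}\delta_1$ versus your $\max\{s_{\hat\sigma},s_{\hat\tau}\}\delta_3$), but both lie above $\delta=\min\{\delta_0,\delta_1,\delta_2,\delta_3\}$, so the conclusion is unaffected. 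Your remark at the end correctly pinpoints the role of (A4) and the auxiliary open set $W$ in sub-case (II): that is precisely where the paper needs them too. The unification you found is a genuine tidying of the argument, though it does not change the underlying mechanism.
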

\begin{proof}
We have the following two cases.

Case 1: $|\sigma|=|\tau|$. In this case, $\sigma,\tau$ are clearly incomparable since $\sigma\neq\tau$. Note that $C_{\rho\ast\rho^{(0)}},C_{\omega\ast\rho^{(0)}}\subset C\subset K$. By the IOSC, $f_\sigma(U)\cap f_\tau(U)=\emptyset$. Thus,
\begin{eqnarray*}
f_{\sigma\ast\tau^{(0)}}(C_{\rho\ast\rho^{(0)}})\subset f_\sigma(U),\;f_{\tau\ast\tau^{(0)}}(C_{\omega\ast\rho^{(0)}})\subset f_\tau(U).
\end{eqnarray*}
Using these facts, we deduce
\begin{eqnarray*}
&&d(f_{\sigma\ast\tau^{(0)}}(C_{\rho\ast\rho^{(0)}}),f_{\tau\ast\tau^{(0)}}(C_{\omega\ast\rho^{(0)}}))\\&&\geq
\max\{d(f_{\sigma\ast\tau^{(0)}}(C_{\rho\ast\rho^{(0)}}),f_\sigma(U^c)),d(f_{\tau\ast\tau^{(0)}}(C_{\omega\ast\rho^{(0)}}),f_\tau(U^c)))\}\\&&\geq
\max\{d(f_{\sigma\ast\tau^{(0)}}(K),f_\sigma(U^c)),d(f_{\tau\ast\tau^{(0)}}(K),f_\tau(U^c)))\}\\
&&\geq \max\{s_\sigma\delta_1,s_\tau\delta_1\}\\&&
\geq\delta_1\max\{|f_{\sigma\ast\tau^{(0)}}(C_{\rho\ast\rho^{(0)}})|,|f_{\sigma\ast\tau^{(0)}}(C_{\omega\ast\rho^{(0)}}))|\}
\end{eqnarray*}

Case 2:  $|\sigma|\neq|\tau|$. Without loss of generality, we assume that $|\sigma|<|\tau|$. We again distinguish have two subcases:

Case 2a $\sigma\nprec\tau$. In this case, we set $h:=\min\{i:\sigma_i\neq\tau_i\}$ and write
\begin{equation}\label{s9}
\sigma=\sigma|_{h-1}\ast\sigma_h\ast\widetilde{\sigma},\;\tau=\sigma|_{h-1}\ast\tau_h\ast\widetilde{\tau}.
\end{equation}
Note that $\sigma_h\neq\tau_h$. By the IOSC, we have, $f_{\sigma_h}(U)\cap f_{\tau_h}(U)=\emptyset$ and
\begin{eqnarray*}
f_{\sigma_h\ast\widetilde{\sigma}\ast\tau^{(0)}}(C_{\rho\ast\rho^{(0)}})\subset
f_{\sigma_h\ast\widetilde{\sigma}}(f_{\tau^{(0)}}(C))\subset f_{\sigma_h\ast\widetilde{\sigma}}(U)\subset f_{\sigma_h}(U),\\
f_{\tau_h\ast\widetilde{\sigma}\ast\tau^{(0)}}(C_{\omega\ast\rho^{(0)}})\subset
f_{\tau_h\ast\widetilde{\sigma}}(f_{\tau^{(0)}}(C))\subset f_{\tau_h\ast\widetilde{\tau}}(U)\subset f_{\tau_h}(U).
\end{eqnarray*}
Thus, using the similarity of the mappings, we deduce
\begin{eqnarray*}
&&d(f_{\sigma\ast\tau^{(0)}}(C_{\rho\ast\rho^{(0)}}),f_{\tau\ast\tau^{(0)}}(C_{\omega\ast\rho^{(0)}}))\\&&\geq s_{\sigma|_{h-1}}d(f_{\sigma_h\ast\widetilde{\sigma}\ast\tau^{(0)}}(C_{\rho\ast\rho^{(0)}}), f_{\tau_h\ast\widetilde{\sigma}\ast\tau^{(0)}}(C_{\omega\ast\rho^{(0)}})\\&&\geq s_{\sigma|_{h-1}}\max\{d(f_{\sigma_h\ast\widetilde{\sigma}\ast\tau^{(0)}}(C_{\rho\ast\rho^{(0)}}), f_{\sigma_h\ast\widetilde{\sigma}}(U^c)),\\&&\;\;\;\;d(f_{\tau_h\ast\widetilde{\tau}\ast\tau^{(0)}}(C_{\omega\ast\rho^{(0)}}), f_{\tau_h\ast\widetilde{\tau}}(U^c)))\}\\&&\geq s_{\sigma|_{h-1}}\max\{d(f_{\sigma_h\ast\widetilde{\sigma}\ast\tau^{(0)}}(K), f_{\sigma_h\ast\widetilde{\sigma}}(U^c)),\\&&\;\;\;\;d(f_{\tau_h\ast\widetilde{\tau}\ast\tau^{(0)}}(K), f_{\tau_h\ast\widetilde{\tau}}(U^c)))\}\\
&&\geq s_{\sigma|_{h-1}}\max\{s_{\sigma_h\ast\widetilde{\sigma}},s_{\tau_h\ast\widetilde{\tau}}\}\delta_1
=\max\{s_\sigma,s_\tau\}\delta_1\\
&&\geq\delta_1\max\{|f_{\sigma\ast\tau^{(0)}}(C_{\rho\ast\rho^{(0)}})|,|f_{\tau\ast\tau^{(0)}}(C_{\omega\ast\rho^{(0)}})|\}.
\end{eqnarray*}

Case 2b: $\sigma\prec\tau$. In this case, we write $\tau=\sigma\ast\widetilde{\tau}$. Then
\begin{eqnarray}\label{s21}
&&d(f_{\sigma\ast\tau^{(0)}}(C_{\rho\ast\rho^{(0)}}),f_{\tau\ast\tau^{(0)}}(C_{\omega\ast\rho^{(0)}}))\nonumber
\\&&=d(f_{\sigma\ast\tau^{(0)}}(C_{\rho\ast\rho^{(0)}}),f_{\sigma\ast\widetilde{\tau}\ast\tau^{(0)}}(C_{\omega\ast\rho^{(0)}}))\nonumber
\\&&=s_\sigma d(f_{\tau^{(0)}}(C_{\rho\ast\rho^{(0)}}),f_{\widetilde{\tau}\ast\tau^{(0)}}(C_{\omega\ast\rho^{(0)}}))
\end{eqnarray}
Again, we need to distinguish two subcases.

Case 2b (i): $\tau^{(0)}\nprec\widetilde{\tau}\ast\tau^{(0)}=:\hat{\tau}$. We write $h:=\min\{i:\tau^{(0)}_i\neq\hat{\tau}_i\}$ and
\begin{eqnarray*}
\tau^{(0)}=\tau^{(0)}|_{h-1}\ast\tau^{(0)}_h\ast\widetilde{\tau^{(0)}},\;\;
\widetilde{\tau}\ast\tau^{(0)}=\tau^{(0)}|_{h-1}\ast\hat{\tau}_h\ast\widetilde{\hat{\tau}}.
\end{eqnarray*}
Note that $C_{\rho\ast\rho^{(0)}}=g_\rho(C_{\rho^{(0)}})\subset g_\rho(C)\subset C\subset U$ and similarly $C_{\omega\ast\rho^{(0)}}\subset U$. Hence,
\begin{eqnarray}
f_{\tau^{(0)}_h\ast\widetilde{\tau^{(0)}}}(C_{\rho\ast\rho^{(0)}})\subset f_{\tau^{(0)}_h\ast\widetilde{\tau^{(0)}}}(U)\subset f_{\tau^{(0)}_h}(U),\label{s28}\\f_{\hat{\tau}_h\ast\widetilde{\hat{\tau}}}(C_{\omega\ast\rho^{(0)}})\subset f_{\hat{\tau}_h\ast\widetilde{\hat{\tau}}}(U)\subset f_{\hat{\tau}_h}(U).\nonumber
\end{eqnarray}
Since $f_{\tau^{(0)}_h}(U)\cap f_{\hat{\tau}_h}(U)=\emptyset$, we have that $f_{\tau^{(0)}_h\ast\widetilde{\tau^{(0)}}}(U)\cap f_{\hat{\tau}_h\ast\widetilde{\hat{\tau}}}(U)=\emptyset$.
Hence, by (\ref{s21}),
\begin{eqnarray*}
&&d(f_{\sigma\ast\tau^{(0)}}(C_{\rho\ast\rho^{(0)}}),f_{\tau\ast\tau^{(0)}}(C_{\omega\ast\rho^{(0)}}))
\\&&=s_\sigma d(f_{\tau^{(0)}}(C_{\rho\ast\rho^{(0)}}),f_{\widetilde{\tau}\ast\tau^{(0)}}(C_{\omega\ast\rho^{(0)}}))\\&&
=s_\sigma s_{\tau^{(0)}|_{h-1}}d(f_{\tau^{(0)}_h\ast\widetilde{\tau^{(0)}}}(C_{\rho\ast\rho^{(0)}}),
f_{\hat{\tau}_h\ast\widetilde{\hat{\tau}}}(C_{\omega\ast\rho^{(0)}}))\\&&
\geq s_\sigma s_{\tau^{(0)}|_{h-1}}\max\big\{d(f_{\tau^{(0)}_h\ast\widetilde{\tau^{(0)}}}(C_{\rho\ast\rho^{(0)}}),f_{\tau^{(0)}_h\ast\widetilde{\tau^{(0)}}}(U^c)),\\&&
\;\;\;\;\;\;\;
d(f_{\hat{\tau}_h\ast\widetilde{\hat{\tau}}}(C_{\omega\ast\rho^{(0)}}),f_{\hat{\tau}_h\ast\widetilde{\hat{\tau}}}(U^c))\big\}\\&&\geq
s_\sigma s_{\tau^{(0)}|_{h-1}}\max\{s_{\tau^{(0)}_h\ast\widetilde{\tau^{(0)}}},s_{\hat{\tau}_h\ast\widetilde{\hat{\tau}}}\}\delta_3\\&&=
\max\{s_{\sigma\ast\tau^{(0)}},s_{\tau\ast\tau^{(0)}}\}\delta_3\\&&
\geq\delta_3\max\{|f_{\sigma\ast\tau^{(0)}}(C_{\rho\ast\rho^{(0)}})|,|f_{\tau\ast\tau^{(0)}}(C_{\omega\ast\rho^{(0)}})|\}.
\end{eqnarray*}

Case 2b (ii): $\tau^{(0)}\prec\hat{\tau}=\widetilde{\tau}\ast\tau^{(0)}$. We write $\hat{\tau}=\tau^{(0)}\ast\overline{\hat{\tau}}$. Then by (\ref{s21}),
\begin{eqnarray}\label{s22}
&&d(f_{\sigma\ast\tau^{(0)}}(C_{\rho\ast\rho^{(0)}}),f_{\tau\ast\tau^{(0)}}(C_{\omega\ast\rho^{(0)}}))
\nonumber\\&&=s_\sigma d(f_{\tau^{(0)}}(C_{\rho\ast\rho^{(0)}}),f_{\widetilde{\tau}\ast\tau^{(0)}}(C_{\omega\ast\rho^{(0)}}))\nonumber\\&&
=s_\sigma d(f_{\tau^{(0)}}(C_{\rho\ast\rho^{(0)}}),f_{\tau^{(0)}\ast\overline{\hat{\tau}}}(C_{\omega\ast\rho^{(0)}}))\nonumber\\&&
=s_\sigma s_{\tau^{(0)}}d(C_{\rho\ast\rho^{(0)}}),f_{\overline{\hat{\tau}}}(C_{\omega\ast\rho^{(0)}})).
\end{eqnarray}
Note that $C_{\rho\ast\rho^{(0)}}\subset g_\rho(W)\subset W$, and $f_{\overline{\hat{\tau}}}(C_{\omega\ast\rho^{(0)}})\subset f_{\overline{\hat{\tau}}}(C))\subset f_{\overline{\hat{\tau}}}(U))$ by (A4). Hence,
\begin{eqnarray*}
W\cap f_{\overline{\hat{\tau}}}(C_{\omega\ast\rho^{(0)}})&\subset& W\cap f_{\overline{\hat{\tau}}}(C)
\subset W\cap f_{\overline{\hat{\tau}}}(U)\subset  W\cap f_{\overline{\hat{\tau}}_1}(U)=\emptyset.
\end{eqnarray*}
Using this and (\ref{s22}), we deduce
\begin{eqnarray*}
&&d(f_{\sigma\ast\tau^{(0)}}(C_{\rho\ast\rho^{(0)}}),f_{\tau\ast\tau^{(0)}}(C_{\omega\ast\rho^{(0)}}))
\\&&=s_\sigma s_{\tau^{(0)}}d(C_{\rho\ast\rho^{(0)}}),f_{\overline{\hat{\tau}}}(C_{\omega\ast\rho^{(0)}}))
\\&&\geq s_\sigma s_{\tau^{(0)}}d(C_{\rho\ast\rho^{(0)}},W^c)\geq s_\sigma s_{\tau^{(0)}}d(C,W^c)\\&&\geq  s_\sigma s_{\tau^{(0)}}\delta_0\geq\delta_0\max\{|f_{\sigma\ast\tau^{(0)}}(C_{\rho\ast\rho^{(0)}})|,|f_{\tau\ast\tau^{(0)}}(C_{\omega\ast\rho^{(0)}})|\}.
\end{eqnarray*}
For the last inequality in the preceding display, we have used the fact that $|\sigma|<|\tau|$.
The lemma follows by combining the analysis of the above two cases.
\end{proof}
\begin{lemma}
Let $\sigma\in\Psi_{k,r},\rho\in\Gamma_{k,r}(\sigma)$ and $\tau\in\Gamma_{k,r}$. Then
\begin{eqnarray}\label{s27}
&&d(f_{\sigma\ast\tau^{(0)}}(C_{\rho\ast\rho^{(0)}}),f_{\tau\ast\tau^{(0)}}(K))\nonumber
\\&&\;\;\geq\delta\max\{|f_{\sigma\ast\tau^{(0)}}(C_{\rho\ast\rho^{(0)}})|,|f_{\tau\ast\tau^{(0)}}(K)|\}.
\end{eqnarray}
\end{lemma}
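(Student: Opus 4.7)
My plan is to mirror the structure of the previous two lemmas and split on the relationship between $\sigma$ and $\tau$. First I observe that $\tau\prec\sigma$ cannot occur: if $\sigma\in\Omega_h$ with $h<l_{1,k}$ then $|\sigma|<l_{1,k}\le|\tau|$; and if $\sigma\in\Lambda_{\Gamma_{k,r}}^*$ then by definition $\sigma$ has a strict descendant $\tau'\in\Gamma_{k,r}$, so $\tau\prec\sigma$ would force $\tau\prec\tau'$, contradicting that $\Gamma_{k,r}$ is an antichain. Hence only two cases arise: either $\sigma$ and $\tau$ are incomparable, or $\sigma\prec\tau$.

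In the incomparable case I would set $h:=\min\{i:\sigma_i\ne\tau_i\}$, write $\sigma=\sigma|_{h-1}\ast\sigma_h\ast\widetilde\sigma$ and $\tau=\sigma|_{h-1}\ast\tau_h\ast\widetilde\tau$, and extract the common prefix, reducing the distance to $s_{\sigma|_{h-1}}\,d(f_{\sigma_h\ast\widetilde\sigma\ast\tau^{(0)}}(C_{\rho\ast\rho^{(0)}}),\,f_{\tau_h\ast\widetilde\tau\ast\tau^{(0)}}(K))$. Using $C\subset K$ and $f_{\tau^{(0)}}(K)\subset U$, the first set sits in $f_{\sigma_h\ast\widetilde\sigma}(U)\subset f_{\sigma_h}(U)$ and the second in $f_{\tau_h\ast\widetilde\tau}(U)\subset f_{\tau_h}(U)$; by (A2) these two open sets are disjoint, so exactly as in Case~2a of the previous lemma the estimate $d(f_{\tau^{(0)}}(K),U^c)=\delta_1$ applied on the $K$-side (and the analogous bound on the $C$-side, valid since $f_{\tau^{(0)}}(C_{\rho\ast\rho^{(0)}})\subset f_{\tau^{(0)}}(K)$) yields a lower bound $\max\{s_\sigma,s_\tau\}\delta_1$. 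Since $|C|\le|K|=1$, each of the two diameters in (\ref{s27}) is bounded by $\max\{s_\sigma,s_\tau\}$, which settles this case.

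For the case $\sigma\prec\tau$, I would write $\tau=\sigma\ast\widetilde\tau$, peel off the factor $s_\sigma$, and reduce to bounding $d(f_{\tau^{(0)}}(C_{\rho\ast\rho^{(0)}}),f_{\hat\tau}(K))$ with $\hat\tau:=\widetilde\tau\ast\tau^{(0)}$, then split on whether $\tau^{(0)}\prec\hat\tau$. If $\tau^{(0)}\not\prec\hat\tau$, let $h$ be the first differing position, factor out $\tau^{(0)}|_{h-1}$, and argue in parallel with Case~2b(i): the first inner set lies in $f_{\tau^{(0)}_h\ast\widetilde{\tau^{(0)}}}(U)\subset f_{\tau^{(0)}_h}(U)$ while the second, via $K\subset{\rm cl}(U)$ (Lemma~\ref{olsen}(a)), lies in $f_{\hat\tau_h}({\rm cl}(U))$; the observation that an open $f_{\tau^{(0)}_h}(U)$ disjoint from the open $f_{\hat\tau_h}(U)$ is automatically disjoint from $f_{\hat\tau_h}({\rm cl}(U))$---any common point would force a boundary point of $f_{\hat\tau_h}(U)$ to possess an open neighbourhood inside $f_{\tau^{(0)}_h}(U)$, contradicting the disjointness of the two open sets---then lets me use $d(C,U^c)=\delta_3$ to obtain a lower bound $s_{\tau^{(0)}}\delta_3$. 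If instead $\tau^{(0)}\prec\hat\tau$, write $\hat\tau=\tau^{(0)}\ast\overline{\hat\tau}$ with $|\overline{\hat\tau}|\ge 1$; the distance factors as $s_{\tau^{(0)}}d(C_{\rho\ast\rho^{(0)}},f_{\overline{\hat\tau}}(K))$, and combining $C_{\rho\ast\rho^{(0)}}\subset W$ (from $C\subset W$ and $g_i(W)\subset W$) with $f_{\overline{\hat\tau}}(K)\subset f_{\overline{\hat\tau}_1}({\rm cl}(U))\subset W^c$ via (\ref{s23}) yields a lower bound $s_{\tau^{(0)}}\delta_0$. In either sub-case, multiplying by $s_\sigma$ and using $s_\tau\le s_\sigma$ to dominate both diameters by $s_{\sigma\ast\tau^{(0)}}$ delivers (\ref{s27}) with constant $\min\{\delta_0,\delta_3\}\ge\delta$.

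The main obstacle lies in sub-case~2(i): unlike the previous lemma, one of the two sets is an image of the full attractor $K$ rather than of a piece of $C$, so the clean inclusion inside the open set $f_{\hat\tau_h}(U)$ is no longer available and I must upgrade the open-set disjointness provided by (A2) to disjointness from the closure $f_{\hat\tau_h}({\rm cl}(U))$. Once that short topological bootstrap is in place, all remaining work is routine bookkeeping parallel to the preceding two lemmas.
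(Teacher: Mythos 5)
Your argument is correct and follows the paper's proof essentially step for step: the same reduction (via the antichain/length observations) to the two cases $\sigma,\tau$ incomparable or $\sigma\prec\tau$ with $|\sigma|<|\tau|$, the same further split on whether $\tau^{(0)}\prec\widetilde\tau\ast\tau^{(0)}$, and the same constants $\delta_1,\delta_3,\delta_0$ in the respective sub-cases. Your explicit topological justification that $f_{\tau^{(0)}_h}(U)\cap f_{\hat\tau_h}(U)=\emptyset$ forces $f_{\tau^{(0)}_h}(U)\cap f_{\hat\tau_h}({\rm cl}(U))=\emptyset$ supplies a step the paper states without comment.
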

\begin{proof}
First, we note the following two facts:

(F1) For every $\sigma\in\Lambda_{\Gamma_{k,r}}^*$, by the definition, there exists a $\tau\in\Gamma_{k,r}$ such that $|\tau|>|\sigma|$ and $\sigma\prec\tau$. Since $\Gamma_{k,r}$ is a finite maximal antichain, we know that the predecessors of $\sigma$ do not belong to $\Gamma_{k,r}$. Thus, for $\sigma\in\Lambda_{\Gamma_{k,r}}^*$ and $\tau\in\Gamma_{k,r}$, we have, either $\sigma\prec\tau,|\sigma|<|\tau|$, or, $\sigma,\tau$ are incomparable.

(F2) For $\sigma\in\bigcup_{h=0}^{l_{1,k}-1}\Omega_h$ and $\tau\in\Gamma_{k,r}$, we have, $|\sigma|\leq l_{1,k}-1<|\tau|$. We have only two possible cases: either $\sigma\prec\tau$, or $\sigma\nprec\tau$.

Combining the above analysis, we conclude that, for every $\sigma\in\Psi_{k,r}$ and $\tau\in\Gamma_{k,r}$, we have, either $\sigma\prec\tau,|\sigma|<|\tau|$, or, $\sigma,\tau$ are incomparable.

Next, we complete the proof analogously to Case 2 of the preceding lemma. We distinguish two cases.

Case I: $\sigma,\tau$ are incomparable. We write $\sigma,\tau$ as in (\ref{s9}), namely,
\begin{equation*}
\sigma=\sigma|_{h-1}\ast\sigma_h\ast\widetilde{\sigma},\;\tau=\sigma|_{h-1}\ast\tau_h\ast\widetilde{\tau}.
\end{equation*}
One can replace $C_{\omega\ast\rho^{(0)}}$ with $K$ in the proof of Case 2a and get (\ref{s27}) conveniently.

Case II: $\sigma\prec\tau$ and $|\sigma|<|\tau|$. In this case, we write $\tau=\sigma\ast\widetilde{\tau}$. Then
\begin{eqnarray}\label{s26}
&&d(f_{\sigma\ast\tau^{(0)}}(C_{\rho\ast\rho^{(0)}}),f_{\tau\ast\tau^{(0)}}(K))\nonumber
\\&&=d(f_{\sigma\ast\tau^{(0)}}(C_{\rho\ast\rho^{(0)}}),f_{\sigma\ast\widetilde{\tau}\ast\tau^{(0)}}(K))\nonumber
\\&&=s_\sigma d(f_{\tau^{(0)}}(C_{\rho\ast\rho^{(0)}}),f_{\widetilde{\tau}\ast\tau^{(0)}}(K))
\end{eqnarray}
As in the proof of the preceding lemma, we need to distinguish two subcases.

Case II (i): $\tau^{(0)}\nprec\widetilde{\tau}\ast\tau^{(0)}=:\hat{\tau}$. We write $h:=\min\{i:\tau^{(0)}_i\neq\hat{\tau}_i\}$ and
\begin{eqnarray*}
\tau^{(0)}=\tau^{(0)}|_{h-1}\ast\tau^{(0)}_h\ast\widetilde{\tau^{(0)}},\;\;
\widetilde{\tau}\ast\tau^{(0)}=\tau^{(0)}|_{h-1}\ast\hat{\tau}_h\ast\widetilde{\hat{\tau}}.
\end{eqnarray*}
As in Case 2b(i), (\ref{s28}) holds. By Lemma \ref{olsen} (d), $K\subset{\rm cl}(U)$. Hence, we deduce
\begin{eqnarray*}
f_{\hat{\tau}_h\ast\widetilde{\hat{\tau}}}(K)\subset f_{\hat{\tau}_h\ast\widetilde{\hat{\tau}}}(\rm cl(U))\subset f_{\hat{\tau}_h}(\rm cl(U)).
\end{eqnarray*}
Since $f_{\tau^{(0)}_h}(U)\cap f_{\hat{\tau}_h}(U)=\emptyset$, we have, $f_{\tau^{(0)}_h\ast\widetilde{\tau^{(0)}}}(U)\cap f_{\hat{\tau}_h\ast\widetilde{\hat{\tau}}}(\rm cl(U))=\emptyset$. On the other hand, we have $C\subset U$, so $f_{\tau^{(0)}_h\ast\widetilde{\tau^{(0)}}}(C)\subset f_{\tau^{(0)}_h\ast\widetilde{\tau^{(0)}}}(U)$.
Hence, by (\ref{s26}),
\begin{eqnarray*}
&&d(f_{\sigma\ast\tau^{(0)}}(C_{\rho\ast\rho^{(0)}}),f_{\tau\ast\tau^{(0)}}(K))
\\&&=s_\sigma d(f_{\tau^{(0)}}(C_{\rho\ast\rho^{(0)}}),f_{\widetilde{\tau}\ast\tau^{(0)}}(K))\\&&
=s_\sigma s_{\tau^{(0)}|_{h-1}}d(f_{\tau^{(0)}_h\ast\widetilde{\tau^{(0)}}}(C_{\rho\ast\rho^{(0)}}),
f_{\hat{\tau}_h\ast\widetilde{\hat{\tau}}}(K))\\&&
\geq s_\sigma s_{\tau^{(0)}|_{h-1}}d(f_{\tau^{(0)}_h\ast\widetilde{\tau^{(0)}}}(C_{\rho\ast\rho^{(0)}}),f_{\tau^{(0)}_h\ast\widetilde{\tau^{(0)}}}(U^c))\\&&
\geq s_\sigma s_{\tau^{(0)}|_{h-1}}d(f_{\tau^{(0)}_h\ast\widetilde{\tau^{(0)}}}(C),f_{\tau^{(0)}_h\ast\widetilde{\tau^{(0)}}}(U^c))\\&&\geq
s_\sigma s_{\tau^{(0)}|_{h-1}}s_{\tau^{(0)}_h\ast\widetilde{\tau^{(0)}}}\delta_3=s_\sigma s_{\tau^{(0)}}\delta_3
\\&&\geq\delta_3\max\{|f_{\sigma\ast\tau^{(0)}}(C_{\rho\ast\rho^{(0)}})|,|f_{\tau\ast\tau^{(0)}}(K)|\}.
\end{eqnarray*}
For the last inequality, we have used the fact that $\sigma\prec\tau$.

Case II (ii): $\tau^{(0)}\prec\hat{\tau}=\widetilde{\tau}\ast\tau^{(0)}$. We write $\hat{\tau}=\tau^{(0)}\ast\overline{\hat{\tau}}$. Then as in (\ref{s21}),
\begin{eqnarray}\label{s29}
&&d(f_{\sigma\ast\tau^{(0)}}(C_{\rho\ast\rho^{(0)}}),f_{\tau\ast\tau^{(0)}}(K))\nonumber
\\&&=s_\sigma d(f_{\tau^{(0)}}(C_{\rho\ast\rho^{(0)}}),f_{\widetilde{\tau}\ast\tau^{(0)}}(K))\nonumber\\&&
=s_\sigma d(f_{\tau^{(0)}}(C_{\rho\ast\rho^{(0)}}),f_{\tau^{(0)}\ast\overline{\hat{\tau}}}(K))\nonumber\\&&
=s_\sigma s_{\tau^{(0)}}d(C_{\rho\ast\rho^{(0)}},f_{\overline{\hat{\tau}}}(K)).
\end{eqnarray}
Note that $C_{\rho\ast\rho^{(0)}}\subset g_\rho(W)\subset W$; and by Lemma \ref{olsen} (a), $f_{\overline{\hat{\tau}}}(K)\subset f_{\overline{\hat{\tau}}}(\rm cl(U)))$. Hence,
\begin{eqnarray*}
W\cap f_{\overline{\hat{\tau}}}(K)&\subset& W\cap f_{\overline{\hat{\tau}}}(\rm cl(U))
\subset  W\cap f_{\overline{\hat{\tau}}_1}(\rm cl(U))=\emptyset.
\end{eqnarray*}
Using this and (\ref{s29}), we deduce
\begin{eqnarray*}
&&d(f_{\sigma\ast\tau^{(0)}}(C_{\rho\ast\rho^{(0)}},f_{\tau\ast\tau^{(0)}}(K))
\\&&=s_\sigma s_{\tau^{(0)}}d(C_{\rho\ast\rho^{(0)}},f_{\overline{\hat{\tau}}}(K))
\geq s_\sigma s_{\tau^{(0)}}d(C_{\rho\ast\rho^{(0)}},W^c)\\&&\geq s_\sigma s_{\tau^{(0)}}d(C,W^c)\geq s_\sigma s_{\tau^{(0)}}\delta_0\\&&\geq \delta_0\max\{|f_{\sigma\ast\tau^{(0)}}(C_{\rho\ast\rho^{(0)}})|,|f_{\tau\ast\tau^{(0)}}(K)|\}.
\end{eqnarray*}
As in Case II (i), for the last inequality, we have used the fact that $\sigma\prec\tau$.
By combining the above analysis, the lemma follows.
\end{proof}

\begin{lemma}
For $\sigma,\tau\in\Gamma_{k,r}$, we have
\begin{eqnarray*}
d(f_{\sigma\ast\tau^{(0)}}(K),f_{\tau\ast\tau^{(0)}}(K))
\geq\delta_1\max\{|f_{\sigma\ast\tau^{(0)}}(K)|,|f_{\tau\ast\tau^{(0)}}(K)|\}.
\end{eqnarray*}
\end{lemma}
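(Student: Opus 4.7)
The plan is to follow the template already established in Case 2a of the lemma two steps above (the incomparability case), since $\Gamma_{k,r}$ is a finite antichain and hence any two distinct $\sigma,\tau\in\Gamma_{k,r}$ are automatically incomparable; there is no subcase analogous to Case 2b to worry about. The key advantage over the earlier lemma is that we now have $K$ in both slots instead of a single $C_{\cdot\ast\rho^{(0)}}$, but this only makes things easier because the inclusion $f_{\tau^{(0)}}(K)\subset U$ is exactly what $\delta_1$ was defined to measure in (\ref{s24}).

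First I would take $h:=\min\{i:\sigma_i\neq\tau_i\}$ and write
\begin{equation*}
\sigma=\sigma|_{h-1}\ast\sigma_h\ast\widetilde{\sigma},\qquad \tau=\sigma|_{h-1}\ast\tau_h\ast\widetilde{\tau},
\end{equation*}
so that by the similarity of $f_{\sigma|_{h-1}}$,
\begin{equation*}
d(f_{\sigma\ast\tau^{(0)}}(K),f_{\tau\ast\tau^{(0)}}(K))=s_{\sigma|_{h-1}}\,d\bigl(f_{\sigma_h\ast\widetilde{\sigma}\ast\tau^{(0)}}(K),f_{\tau_h\ast\widetilde{\tau}\ast\tau^{(0)}}(K)\bigr).
\end{equation*}

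Next, using Lemma \ref{g9} (namely $f_{\tau^{(0)}}(K)\subset U$) together with the inclusion $f_i(U)\subset U$ from (A1), I would deduce
\begin{equation*}
f_{\sigma_h\ast\widetilde{\sigma}\ast\tau^{(0)}}(K)\subset f_{\sigma_h\ast\widetilde{\sigma}}(U)\subset f_{\sigma_h}(U),\qquad f_{\tau_h\ast\widetilde{\tau}\ast\tau^{(0)}}(K)\subset f_{\tau_h\ast\widetilde{\tau}}(U)\subset f_{\tau_h}(U),
\end{equation*}
and invoke the IOSC assumption $f_{\sigma_h}(U)\cap f_{\tau_h}(U)=\emptyset$. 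Hence
\begin{align*}
d\bigl(f_{\sigma_h\ast\widetilde{\sigma}\ast\tau^{(0)}}(K),f_{\tau_h\ast\widetilde{\tau}\ast\tau^{(0)}}(K)\bigr)
&\geq \max\bigl\{d(f_{\sigma_h\ast\widetilde{\sigma}\ast\tau^{(0)}}(K),f_{\sigma_h\ast\widetilde{\sigma}}(U^c)),\\
&\qquad\qquad d(f_{\tau_h\ast\widetilde{\tau}\ast\tau^{(0)}}(K),f_{\tau_h\ast\widetilde{\tau}}(U^c))\bigr\}\\
&\geq \max\{s_{\sigma_h\ast\widetilde{\sigma}},s_{\tau_h\ast\widetilde{\tau}}\}\,\delta_1,
\end{align*}
where the last step uses the similarity of the maps together with the definition $\delta_1=d(f_{\tau^{(0)}}(K),U^c)$.

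Finally, multiplying by $s_{\sigma|_{h-1}}$ converts the right-hand side into $\max\{s_\sigma,s_\tau\}\delta_1$. Since $s_{\tau^{(0)}}\leq 1$ (the maps $f_i$ are contractions), we have $|f_{\sigma\ast\tau^{(0)}}(K)|=s_\sigma s_{\tau^{(0)}}\leq s_\sigma$ and likewise for $\tau$, so $\max\{s_\sigma,s_\tau\}\geq \max\{|f_{\sigma\ast\tau^{(0)}}(K)|,|f_{\tau\ast\tau^{(0)}}(K)|\}$, which delivers the claimed inequality. I do not anticipate any serious obstacle here; the only minor point requiring care is to make sure the incomparability of $\sigma,\tau$ inside $\Gamma_{k,r}$ rules out the nested Case 2b that had to be treated separately in the previous two lemmas, and this is immediate from the definition of a finite antichain.
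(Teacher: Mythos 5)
Your proposal is correct and follows essentially the same line as the paper's own argument: reduce via the common prefix $\sigma|_{h-1}$, push both images into the disjoint sets $f_{\sigma_h}(U)$ and $f_{\tau_h}(U)$ using $f_{\tau^{(0)}}(K)\subset U$ together with (A1)--(A2), and conclude with $\delta_1=d(f_{\tau^{(0)}}(K),U^c)$ and similarity scaling. If anything, your write-up is slightly cleaner (the paper's display has the typos $s_{\sigma|_h}$ for $s_{\sigma|_{h-1}}$ and $|f_{\sigma\ast\tau^{(0)}}(C_{\rho\ast\rho^{(0)}})|$ for $|f_{\sigma\ast\tau^{(0)}}(K)|$), and your observation that antichain membership automatically rules out the nested subcase is exactly the point the paper treats as "clearly."
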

\begin{proof}
Clearly $\sigma,\tau$ are incomparable. Using (\ref{s9}), we deduce
 \begin{eqnarray*}
&&d(f_{\sigma\ast\tau^{(0)}}(K),f_{\tau\ast\tau^{(0)}}(K))\\
&&\geq s_{\sigma|_h}\max\{d(f_{\sigma_h\ast\widetilde{\sigma}\ast\tau^{(0)}}(K), f_{\sigma_h\ast\widetilde{\sigma}}(U^c)),\\&&\;\;\;\;\;\;d(f_{\tau_h\ast\widetilde{\tau}\ast\tau^{(0)}}(K), f_{\tau_h\ast\widetilde{\tau}}(U^c)))\}\\
&&\geq s_{\sigma|_h}\max\{s_{\sigma_h\ast\widetilde{\sigma}}, s_{\tau_h\ast\widetilde{\tau}})\}\delta_1=\max\{s_\sigma,s_\tau\}\delta_1\\
&&\geq\delta_1\max\{|f_{\sigma\ast\tau^{(0)}}(C_{\rho\ast\rho^{(0)}})|,|f_{\tau\ast\tau^{(0)}}(K)|\}.
\end{eqnarray*}
This completes the proof of the lemma.
\end{proof}

Next, we need to estimate the "energy"----$\mathcal{E}(A):=\mu(A)|A|^r$ of the sets $A\in\mathcal{F}_{k,r}$. As we will show in the following lemma, the energy of these sets are uniformly comparable. That is,
\begin{lemma}
There exist constants $d_2,d_3>0$ such that
\begin{eqnarray}\label{s13}
d_3\underline{\eta}_r^k\leq\mathcal{E}(A)<d_2\underline{\eta}_r^k,\;\;{\rm for\;all}\;\;A\in\mathcal{F}_{k,r}\;\;{\rm and}\;\;k\geq 1.
\end{eqnarray}
\end{lemma}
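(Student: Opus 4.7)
The plan is to compute $\mathcal{E}(A)$ directly in each of the two flavors of sets making up $\mathcal{F}_{k,r}$, and observe that in both cases the product $\mu(A)|A|^r$ factors into a constant (depending only on $\tau^{(0)}$, $\rho^{(0)}$, $|C|$, $p_0$) times a quantity of the form $p_\sigma s_\sigma^r$ or $p_\sigma s_\sigma^r t_\rho c_\rho^r$, which is pinched between $\underline{\eta}_r^{k+1}$ and $\underline{\eta}_r^k$ by the very definitions in \eqref{s7} and \eqref{s11}.

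For a set of the second type, $A = f_{\sigma\ast\tau^{(0)}}(K)$ with $\sigma\in\Gamma_{k,r}$, Lemma \ref{olsen}(d) yields $\mu(A)=p_\sigma p_{\tau^{(0)}}$ and the similitude structure (together with $|K|=1$) gives $|A|^r = s_\sigma^r s_{\tau^{(0)}}^r$, so $\mathcal{E}(A) = p_{\tau^{(0)}} s_{\tau^{(0)}}^r \cdot (p_\sigma s_\sigma^r)$. By \eqref{s7}, $p_\sigma s_\sigma^r < \underline{\eta}_r^k$, and since $p_\sigma s_\sigma^r = p_{\sigma^-} s_{\sigma^-}^r \cdot p_{\sigma_{|\sigma|}} s_{\sigma_{|\sigma|}}^r \geq \underline{\eta}_r^{k+1}$, the two-sided bound follows. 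For a set of the first type, $A = f_{\sigma\ast\tau^{(0)}}(C_{\rho\ast\rho^{(0)}})$ with $\sigma\in\Psi_{k,r},\rho\in\Gamma_{k,r}(\sigma)$, Lemma \ref{g1} gives $\mu(A) = p_0 p_\sigma p_{\tau^{(0)}} t_\rho t_{\rho^{(0)}}$ and $|A|^r = s_\sigma^r s_{\tau^{(0)}}^r c_\rho^r c_{\rho^{(0)}}^r |C|^r$, whence
\begin{equation*}
\mathcal{E}(A) = \bigl(p_0 p_{\tau^{(0)}} s_{\tau^{(0)}}^r t_{\rho^{(0)}} c_{\rho^{(0)}}^r |C|^r\bigr)\cdot \bigl(p_\sigma s_\sigma^r t_\rho c_\rho^r\bigr).
\end{equation*}
The definition \eqref{s11} immediately bounds the second factor above by $\underline{\eta}_r^k$, and the same one-step argument as before bounds it below by $\underline{\eta}_r^{k+1}$ (using that $t_{\rho_{|\rho|}} c_{\rho_{|\rho|}}^r \geq \underline{\eta}_r$).

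Setting
\begin{equation*}
d_2 := \max\bigl\{p_{\tau^{(0)}} s_{\tau^{(0)}}^r,\; p_0 p_{\tau^{(0)}} s_{\tau^{(0)}}^r t_{\rho^{(0)}} c_{\rho^{(0)}}^r |C|^r\bigr\},
\end{equation*}
\begin{equation*}
d_3 := \underline{\eta}_r \cdot \min\bigl\{p_{\tau^{(0)}} s_{\tau^{(0)}}^r,\; p_0 p_{\tau^{(0)}} s_{\tau^{(0)}}^r t_{\rho^{(0)}} c_{\rho^{(0)}}^r |C|^r\bigr\}
\end{equation*}
gives \eqref{s13} uniformly over $A\in\mathcal{F}_{k,r}$ and $k\geq 1$. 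I do not expect any genuine obstacle here: the entire point of inserting the fixed suffixes $\tau^{(0)}$ and $\rho^{(0)}$ into the definition of $\mathcal{F}_{k,r}$ was precisely to isolate a multiplicative constant that is independent of $k$, while the ``active'' indices $\sigma$ and $\rho$ have been selected from antichains explicitly designed so that $p_\sigma s_\sigma^r$ (resp. $p_\sigma s_\sigma^r t_\rho c_\rho^r$) sits within the band $[\underline{\eta}_r^{k+1}, \underline{\eta}_r^k)$. The only mild subtlety is verifying the lower bound at the $\sigma$-level when $\sigma\in\Psi_{k,r}$ rather than $\Gamma_{k,r}$, which is exactly the content of Lemma \ref{g3}.
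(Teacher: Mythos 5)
Your proof is correct and takes essentially the same route as the paper: compute $\mathcal{E}(A)$ explicitly via Lemma \ref{g1} and Lemma \ref{olsen}(d), factor out the constant contribution of $\tau^{(0)}$ and $\rho^{(0)}$, and pinch the remaining factor $p_\sigma s_\sigma^r$ (resp.\ $p_\sigma s_\sigma^r t_\rho c_\rho^r$) between $\underline{\eta}_r^{k+1}$ and $\underline{\eta}_r^k$ using (\ref{s7}) (resp.\ (\ref{s11})). The paper's constants $d_2,d_3$ are the same as yours up to using $\min_i t_ic_i^r$ and $\min_i p_is_i^r$ in place of $\underline{\eta}_r$ for the one-step lower factor, which is an immaterial difference.
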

\begin{proof}
For each $\sigma\in\Psi_{k,r}$ and $\rho\in\Gamma_{k,r}(\sigma)$, by (\ref{s11}), we have
\begin{eqnarray*}
&&\mathcal{E}(f_{\sigma\ast\tau^{(0)}}(C_{\rho\ast\rho^{(0)}})))\\&&=\mu(f_{\sigma\ast\tau^{(0)}}(C_{\rho\ast\rho^{(0)}}))|f_{\sigma\ast\tau^{(0)}}(C_{\rho\ast\rho^{(0)}})|^r
\\&&=p_0p_{\sigma\ast\tau^{(0)}}t_{\rho\ast\rho^{(0)}}s_{\sigma\ast\tau^{(0)}}^r
c_{\rho\ast\rho^{(0)}}^r|C|^r\\&&=p_0p_{\tau^{(0)}}t_{\rho^{(0)}}s_{\tau^{(0)}}^rc_{\rho^{(0)}}^r|C|^rp_\sigma s_\sigma^r t_\rho c_\rho^r\\&&=H_1p_\sigma s_\sigma^r t_\rho c_\rho^r\left\{\begin{array}{ll}\leq H_1\underline{\eta}_r^k\\
\geq H_1H_2\underline{\eta}_r^k\end{array}\right.,
\end{eqnarray*}
where $H_1:=p_0p_{\tau^{(0)}}t_{\rho^{(0)}}s_{\tau^{(0)}}^rc_{\rho^{(0)}}^r|C|^r;\;\;H_2:=\min_{1\leq i\leq M}t_ic_i^r$.

Let $H_3:=p_{\tau^{(0)}}s_{\tau^{(0)}}^r$ and $H_4:=\min_{1\leq i\leq N}p_i c_i^r$. For $\sigma\in\Gamma_{k,r}$, by Lemma \ref{olsen} (d),
\begin{eqnarray*}
&&\mathcal{E}(f_{\sigma\ast\tau^{(0)}}(K))=\mu(f_{\sigma\ast\tau^{(0)}}(K))|f_{\sigma\ast\tau^{(0)}}(K)|^r\\&&=p_\sigma p_{\tau^{(0)}}s_\sigma^r s_{\tau^{(0)}}^r=H_3p_\sigma s_\sigma^r\left\{\begin{array}{ll}\leq H_3\underline{\eta}_r^k\\
\geq H_3H_4\underline{\eta}_r^k\end{array}\right..
\end{eqnarray*}
It suffices to set $d_2:=\max\{H_1,H_3\}$ and $d_3:=\min\{H_1H_2,H_3H_4\}$.
\end{proof}

Let $C_{n,r}(\mu)$ \cite{GL:00} denote the collection of sets $\alpha\in\bigcup_{h\geq 1}^n\mathcal{D}_h$ satisfying (\ref{quanerror}). We also call the points in such a set $\alpha$ $n$-optimal points for $\mu$. With the above preparations, we are able to apply the technique in \cite{Zhu:08} to estimate the number of $\phi_{k,r}$-optimal points lying in the pairwise disjoint neighborhoods of sets in $\mathcal{F}_{k,r}$. Let $\delta$ be as defined in (\ref{s34}). we have
\begin{lemma}\label{g8}
There exists a constant $d_4>0$ such that $\mu(G_{k,r})\geq d_4$. Moreover, there is a constant $L$ independent of $k$ such that, for every $\alpha\in C_{\phi_{k,r},r}(\mu(\cdot|G_{k,r}))$,
\begin{eqnarray}\label{s14}
{\rm card}(\alpha\cap (A)_{8^{-1}\delta|A|})\leq L\;\;{\rm for\;all}\;\;A\in\mathcal{F}_{k,r}.
\end{eqnarray}
\end{lemma}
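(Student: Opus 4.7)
The first assertion $\mu(G_{k,r}) \geq d_4$ follows from a direct mass computation. By Lemma~\ref{g1} and Lemma~\ref{olsen}(d), $\mu(f_{\sigma\ast\tau^{(0)}}(C_{\rho\ast\rho^{(0)}})) = p_0 p_{\tau^{(0)}} t_{\rho^{(0)}} p_\sigma t_\rho$ and $\mu(f_{\sigma\ast\tau^{(0)}}(K)) = p_{\tau^{(0)}} p_\sigma$. The members of $\mathcal{F}_{k,r}$ are pairwise disjoint thanks to (\ref{s12}); summing, using that $\Gamma_{k,r}(\sigma)$ is a maximal antichain in $\Phi^*$ (so $\sum_{\rho\in\Gamma_{k,r}(\sigma)} t_\rho = 1$) together with the total-mass identity $p_0\sum_{\sigma\in\Psi_{k,r}} p_\sigma + \sum_{\sigma\in\Gamma_{k,r}} p_\sigma = 1$ (obtained by applying (\ref{s3}) through the decomposition (\ref{s5})) yields
\begin{equation*}
\mu(G_{k,r}) = p_{\tau^{(0)}}\Bigl[t_{\rho^{(0)}} + (1-t_{\rho^{(0)}})\sum_{\sigma\in\Gamma_{k,r}} p_\sigma\Bigr] \geq p_{\tau^{(0)}} t_{\rho^{(0)}} =: d_4.
\end{equation*}

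For the cardinality bound I follow the perturbation technique of \cite{Zhu:08}. The separation estimate (\ref{s12}) implies that the neighborhoods $\{(A)_{\delta|A|/8}\}_{A\in\mathcal{F}_{k,r}}$ are pairwise disjoint (in fact uniformly separated by at least $3\delta\max/4$). Argue by contradiction: assume $N_0 := \mathrm{card}(\alpha \cap (A_0)_{\delta|A_0|/8}) > L$ for some $A_0$, with $L$ to be fixed. Since $|\alpha| = \phi_{k,r} = |\mathcal{F}_{k,r}|$ and the neighborhoods are disjoint, pigeonhole yields at least $N_0 - 1$ empty sets $B \in \mathcal{F}_{k,r}$ with $\alpha \cap (B)_{\delta|B|/8} = \emptyset$; each such $B$ contributes $\int_B d(x,\alpha)^r\,d\mu \geq (\delta/8)^r d_3 \underline{\eta}_r^k$ by (\ref{s13}).

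Fix $m$ so large that $e_{m,r}^r(\mu) < (\delta/8)^r/2$, which exists because $e_{m,r}(\mu) \to 0$. Build a competitor $\alpha'$ of the same cardinality by deleting $m$ points of $\alpha \cap (A_0)_{\delta|A_0|/8}$ for each of $J$ chosen empty sets $B$, and inserting into each such $B$ an $m$-optimal codebook for $\mu|_B$. The self-similarity of $\mu|_B$ (via Lemma~\ref{olsen}(d) when $B = f_\sigma(K)$, and Lemma~\ref{g1} when $B = f_\sigma(C_\rho)$) gives $e_{m,r}^r(\mu|_B) \leq \mathcal{E}(B)\, e_{m,r}^r(\mu)$, so the error on each refilled $B$ drops by at least $(\delta/8)^r d_3 \underline{\eta}_r^k/2$. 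With at least one point of $\alpha'$ still retained in $(A_0)_{\delta|A_0|/8}$, the loss at $A_0$ is bounded by $(1+\delta/8)^r d_2 \underline{\eta}_r^k$, independently of $J$. For $J$ above the threshold $J_0 := 2(1+\delta/8)^r d_2/((\delta/8)^r d_3)$ and $N_0 \geq mJ_0 + 1$, $\alpha'$ strictly beats $\alpha$, contradicting optimality. Taking $L := mJ_0$ --- a constant depending only on $d_2, d_3, \delta, r$ and $m$, all $k$-independent --- gives the claim.

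The main technical obstacle is localizing the perturbation: deleting $mJ$ points from $\alpha \cap (A_0)_{\delta|A_0|/8}$ may alter the Voronoi nearest-point assignment for mass lying in other sets $C \in \mathcal{F}_{k,r}$, so one must verify that the contributions from untouched $C$'s do not grow by more than a controlled amount. The uniform separation of the dilated neighborhoods combined with the two-sided energy bound (\ref{s13}) makes this bookkeeping close up, exactly as in the original argument of \cite{Zhu:08}.
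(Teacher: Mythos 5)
Your proposal is correct and follows essentially the same approach as the paper. For the mass bound, the paper simply drops the $\Lambda_{\Gamma_{k,r}}^*$ and $\Gamma_{k,r}$ contributions and sums only over $\bigcup_{h<l_{1,k}}\Omega_h$ to get $d_4=p_0p_{\tau^{(0)}}t_{\rho^{(0)}}$, whereas you carry the full sum and the exact identity $p_0\sum_{\sigma\in\Psi_{k,r}}p_\sigma+\sum_{\sigma\in\Gamma_{k,r}}p_\sigma=1$, yielding the (slightly sharper, equally valid) constant $p_{\tau^{(0)}}t_{\rho^{(0)}}$. For the cardinality bound the paper offers only the one-line citation ``using (\ref{s12}), (\ref{s13}) and the techniques in Lemma 10 of \cite{Zhu:08}''; your sketch is a fair reconstruction of that redistribution argument, and your closing paragraph correctly identifies that the remaining work is exactly the Voronoi bookkeeping carried out in \cite{Zhu:08}, which is precisely what the paper delegates as well.
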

\begin{proof}
By (\ref{s10}) and Lemma \ref{g1}, we have
\begin{eqnarray*}
\mu(G_{k,r})&\geq&\mu\bigg(\bigcup_{h=0}^{l_{1,k}}\bigcup_{\sigma\in\Omega_h}f_{\sigma\ast\tau^{(0)}}(C_{\rho\ast\rho^{(0)}})\bigg)
\\&=&\sum_{h=0}^{l_{1,k}-1}\sum_{\sigma\in\Omega_h}\sum_{\rho\in\Gamma_{k,r}(\sigma)}p_0p_\sigma t_\rho p_{\tau^{(0)}}t_{\rho^{(0)}}\\&=&
p_0p_{\tau^{(0)}}t_{\rho^{(0)}}\sum_{h=0}^{l_{1,k}-1}\sum_{\sigma\in\Omega_h}p_\sigma\sum_{\rho\in\Gamma_{k,r}(\sigma)} t_\rho \\
&=&p_0p_{\tau^{(0)}}t_{\rho^{(0)}}\sum_{h=0}^{l_{1,k}-1}(1-p_0)^h
\\&=& p_{\tau^{(0)}}t_{\rho^{(0)}}(1-(1-p_0)^{l_{1,k}})\geq p_0p_{\tau^{(0)}}t_{\rho^{(0)}}=:d_4.
\end{eqnarray*}
Let $\alpha\in C_{\phi_{k,r},r}(\mu(\cdot|G_{k,r}))$. Using (\ref{s12}), (\ref{s13}) and the techniques in \cite[Lemma 10]{Zhu:08}, one may find some $L\geq 1$ such that (\ref{s14}) holds.
\end{proof}

\begin{lemma}
For $l\geq 1$, there is a number $B_l>0$ such that for every $A\in\mathcal{F}_{k,r}$,
\begin{eqnarray}\label{s17}
\inf_{\beta\in\mathcal{D}_l}\int_Ad(x,\beta)^rd\mu(x)\geq B_l\mathcal{E}(A).
\end{eqnarray}
In particular, for $\beta\in C_{\phi_{k,r},r}(\mu(\cdot|G_{k,r}))$, there is a constant $\widetilde{L}$ such that
\begin{eqnarray}\label{s18}
\min_{A\in \mathcal{F}_{k,r}}\int_Ad(x,\beta)^rd\mu(x)\geq B_{\widetilde{L}}\mathcal{E}(A).
\end{eqnarray}
\end{lemma}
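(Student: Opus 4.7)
For (\ref{s17}) the plan is to reduce the left-hand side to an integral against $\nu$ over $C$ or against $\mu$ over $K$ via a similarity change of variables, thereby picking up a factor of $e_{l,r}(\nu)^r$ or $e_{l,r}(\mu)^r$, both strictly positive. The sets $A\in\mathcal{F}_{k,r}$ come in two flavours, which I treat in parallel.

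For a set $A=f_\pi(C_\omega)$ with $\pi:=\sigma\ast\tau^{(0)}$ of length $n$ and $\omega:=\rho\ast\rho^{(0)}$, I apply the expansion (\ref{s3}) with this $n$. All $\nu\circ f_\tau^{-1}$-terms with $|\tau|<n$ vanish on $A\subset f_\pi({\rm cl}(U))$ by Lemma \ref{olsen}(b), and all $\mu\circ f_\tau^{-1}$-terms with $|\tau|=n$, $\tau\neq\pi$, vanish by combining Lemma \ref{olsen}(b) and (c). The only surviving piece is $p_\pi\mu\circ f_\pi^{-1}$, and the same argument used in the proof of Lemma \ref{g1} upgrades this to $\mu|_A = p_0p_\pi\,\nu\circ f_\pi^{-1}|_A$. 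A change of variables followed by the self-similarity of $\nu$ on $C_\omega$ then yields
\[
\int_A d(x,\beta)^r d\mu(x) = p_0p_\pi s_\pi^r t_\omega c_\omega^r \int_C d(z,\alpha)^r d\nu(z) \geq p_0p_\pi s_\pi^r t_\omega c_\omega^r\,e_{l,r}(\nu)^r,
\]
where $\alpha:=g_\omega^{-1}f_\pi^{-1}(\beta)$ has $|\alpha|\leq l$; since $\mathcal{E}(A)=p_0p_\pi t_\omega(s_\pi c_\omega)^r|C|^r$, this delivers (\ref{s17}) on such $A$ with constant $e_{l,r}(\nu)^r/|C|^r$. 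For $A=f_\pi(K)$ with $\sigma\in\Gamma_{k,r}$, only the term $p_\pi\mu\circ f_\pi^{-1}$ of (\ref{s3}) survives and the analogous computation gives $\int_A d(x,\beta)^r d\mu(x)\geq p_\pi s_\pi^r e_{l,r}(\mu)^r = e_{l,r}(\mu)^r\mathcal{E}(A)$ (using $|K|=1$). Taking $B_l$ to be the smaller of the two positive constants proves (\ref{s17}).

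For (\ref{s18}) I would localize using Lemma \ref{g8}. Set $N_A:=(A)_{\delta|A|/8}$, so that $|\beta\cap N_A|\leq L$ by (\ref{s14}). For $x\in A$, every $y\in\beta\setminus N_A$ satisfies $d(x,y)\geq\delta|A|/8$, so partitioning $A=A_1\cup A_2$ with $A_1:=\{x\in A:d(x,\beta\cap N_A)<\delta|A|/8\}$, one has $d(x,\beta)=d(x,\beta\cap N_A)$ on $A_1$ and $d(x,\beta)\geq\delta|A|/8$ on $A_2$. Combined with the trivial bound $d(x,\beta\cap N_A)\leq(1+\delta/4)|A|$ on $A_2$, this gives
\[
\int_A d(x,\beta)^r d\mu(x)\geq c\int_A d(x,\beta\cap N_A)^r d\mu(x)
\]
for some explicit $c=c(\delta,r)>0$. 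Since $|\beta\cap N_A|\leq L$, applying (\ref{s17}) with $l=L$ to $\beta\cap N_A$ yields (\ref{s18}) with $\widetilde L:=L$, after absorbing $c$ into the constant.

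The main obstacle is really only the first reduction---identifying that a single term of (\ref{s3}) contributes to $\mu|_A$ for each $A\in\mathcal{F}_{k,r}$---which requires careful use of both halves of Lemma \ref{olsen}(b) together with (c). Once that is in place, (\ref{s17}) is standard self-similar scaling, and (\ref{s18}) is a standard localization argument in the spirit of \cite[Lemma 10]{Zhu:08}.
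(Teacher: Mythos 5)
Your proof of (\ref{s17}) is essentially the paper's argument: restrict to $A$ using (\ref{s3}) together with Lemma \ref{olsen}(b)(c), change variables under the two similitude systems, and produce $e_{l,r}^r(\nu)$ (resp.\ $e_{l,r}^r(\mu)$) as the constant. Your observation that $\mu|_C = p_0\,\nu|_C$ is an \emph{equality}, whereas the paper only uses the one-sided bound extracted directly from (\ref{s3}), but this is a cosmetic difference and the resulting constant $B_l=\min\{e_{l,r}^r(\nu)/|C|^r,\,e_{l,r}^r(\mu)\}$ works just as well.

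For (\ref{s18}) your route differs from the paper's in a small but noteworthy way. You try to dominate $d(x,\beta\cap N_A)$ by $d(x,\beta)$ up to a multiplicative constant $c(\delta,r)$, splitting $A$ into two regions, and then apply (\ref{s17}) with $l=L$. The paper instead adjoins to $\beta\cap N_A$ a fixed finite covering set $\gamma_A$ of $A$ by balls of radius $\delta|A|/16$ (of cardinality $L_1$ bounded by a volume estimate), obtaining $d(x,\beta)\geq d(x,(\beta\cap N_A)\cup\gamma_A)$ \emph{pointwise} on $A$ and then applying (\ref{s17}) with $l=L+L_1$. The paper's device is cleaner, and it also closes a genuine gap in your argument: Lemma \ref{g8} gives only an upper bound ${\rm card}(\beta\cap N_A)\leq L$ and does \emph{not} guarantee $\beta\cap N_A\neq\emptyset$. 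When $\beta\cap N_A=\emptyset$, the set $\beta\cap N_A$ is not in $\mathcal{D}_L$ so (\ref{s17}) cannot be invoked, and your bound $d(x,\beta\cap N_A)\leq(1+\delta/4)|A|$ on $A_2$ is vacuous. You should either handle this case separately (then $d(x,\beta)\geq\delta|A|/8$ for all $x\in A$, so $\int_A d(x,\beta)^r\,d\mu\geq(\delta/8)^r\mathcal{E}(A)$, and $\widetilde L$ must be taken large enough that $B_{\widetilde L}\leq\min\{cB_L,(\delta/8)^r\}$), or simply adopt the paper's covering-set trick, which makes $(\beta\cap N_A)\cup\gamma_A$ automatically nonempty.
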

\begin{proof}
Let $l\geq 1$ and $\beta\in\mathcal{D}_l$ (see (\ref{quanerror})) be given. Set $\gamma:=f_{\sigma\ast\tau^{(0)}}^{-1}(\beta)$.
For every $\sigma\in\Psi_{k,r}$ and $\rho\in\Gamma_{k,r}(\sigma)$, using (\ref{s3}), we deduce
\begin{eqnarray*}
&&\int_{f_{\sigma\ast\tau^{(0)}}(C_{\rho\ast\rho^{(0)}})}d(x,\beta)^rd\mu(x)\\&&\geq p_{\sigma\ast\tau^{(0)}}\int_{f_{\sigma\ast\tau^{(0)}}(C_{\rho\ast\rho^{(0)}})}d(x,\beta)^rd\nu\circ f_{\sigma\ast\tau^{(0)}}^{-1}(x)\\
&&\geq p_{\sigma\ast\tau^{(0)}}\int_{C_{\rho\ast\rho^{(0)}}}d(f_{\sigma\ast\tau^{(0)}}(x),\beta)^rd\nu(x)
\\&&=p_{\sigma\ast\tau^{(0)}}s_{\sigma\ast\tau^{(0)}}^r\int_{C_{\rho\ast\rho^{(0)}}}d(x,\gamma)^rd\nu(x)\\
&&=p_{\sigma\ast\tau^{(0)}}s_{\sigma\ast\tau^{(0)}}^rt_{\rho\ast\rho^{(0)}}\int d(x,\gamma)^rd\nu\circ g_{\rho\ast\rho^{(0)}}^{-1}(x)\\
&&=p_{\sigma\ast\tau^{(0)}}s_{\sigma\ast\tau^{(0)}}^rt_{\rho\ast\rho^{(0)}}c_{\rho\ast\rho^{(0)}}^r\int d(x,g_{\rho\ast\rho^{(0)}}^{-1}(\gamma))^rd\nu(x)\\
&&\geq p_{\sigma\ast\tau^{(0)}}s_{\sigma\ast\tau^{(0)}}^rt_{\rho\ast\rho^{(0)}}c_{\rho\ast\rho^{(0)}}^r e_{l,r}^r(\nu)\\&&\geq e_{l,r}^r(\nu)\mathcal{E}(f_{\sigma\ast\tau^{(0)}}(C_{\rho\ast\rho^{(0)}})),
\end{eqnarray*}
where we have used the fact that $|C|\leq |K|=1$.
Now let $\sigma\in\Gamma_{k,r}$ and let $\gamma$ be as defined above. again, by (\ref{s3}), we have
\begin{eqnarray*}
&&\int_{f_{\sigma\ast\tau^{(0)}}(K)}d(x,\beta)^rd\mu(x)\\&& \geq p_{\sigma\ast\tau^{(0)}}\int_{f_{\sigma\ast\tau^{(0)}}(K)}d(x,\beta)^rd\mu\circ f_{\sigma\ast\tau^{(0)}}^{-1}(x)\\
&&\geq p_{\sigma\ast\tau^{(0)}}\int d(f_{\sigma\ast\tau^{(0)}}(x),\beta)^rd\mu(x)\\&&=p_{\sigma\ast\tau^{(0)}}s_{\sigma\ast\tau^{(0)}}^r\int d(x,\gamma)^rd\mu(x)\\
&&\geq p_{\sigma\ast\tau^{(0)}}s_{\sigma\ast\tau^{(0)}}^re_{l,r}^r(\mu)=e_{l,r}^r(\mu)\mathcal{E}(f_{\sigma\ast\tau^{(0)}}(K)).
\end{eqnarray*}
Thus, (\ref{s17}) follows by setting $B_l:=\min\{e_{l,r}^r(\mu),e_{l,r}^r(\nu)\}$.

Now let $\beta\in C_{\phi_{k,r}}(\mu(\cdot|G_{k,r}))$. By Lemma \ref{g8}, ${\rm card}(\beta\cap(A)_{8^{-1}\delta|A|})\leq L$ for every $A\in \mathcal{F}_{k,r}$. By estimating volumes, one can see that, there is a constant $L_1\in\mathbb{N}$, which is independent of $A$ such that $A$ can be covered by $L_1$ closed balls of radii $\delta|A|/16$. We denote by $\gamma_A$ the centers of such $L_1$ balls. Then
\begin{eqnarray*}
\int_A d(x,\beta)^rd\mu(x)\geq\int_A d(x,(\beta\cap(A)_{8^{-1}\delta|A|})\cup\gamma_A)^rd\mu(x)
\end{eqnarray*}
Note that ${\rm card}((\beta\cap(A)_{8^{-1}\delta|A|})\cup\gamma_A)\leq L+L_1=:L_2$. By (\ref{s17}), we obtain
\begin{eqnarray*}
\int_A d(x,\beta)^rd\mu(x)\geq B_{L_2}\mathcal{E}(A).
\end{eqnarray*}
By setting $\widetilde{L}:=L_2$, (\ref{s18}) follows.
\end{proof}

With the help of the preceding two Lemmas, we are able to give a lower bound for the quantization error of $\mu$.

\begin{lemma}\label{g4}
Let $\mu$ be as stated in Theorem \ref{mthm1}. There exists some constant $d_5>0$, which is independent of $k$, such that
\begin{eqnarray*}
e^r_{\phi_{k,r},r}(\mu)\geq d_5\bigg(\sum_{\sigma\in\Psi_{k,r}}\sum_{\rho\in\Gamma_{k,r}(\sigma)}p_\sigma s_\sigma^r t_\omega c_\omega^r+\sum_{\sigma\in\Gamma_{k,r}}p_\sigma s_\sigma^r\bigg).
\end{eqnarray*}
\end{lemma}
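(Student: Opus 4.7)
The plan is to pass from $\mu$ to the conditional measure $\mu(\cdot|G_{k,r})$, and then to exploit the three properties that the previous lemmas have already established for $\mathcal{F}_{k,r}$: pairwise disjointness with quantitative separation (\ref{s12}), uniformly comparable energy (\ref{s13}), and the uniform cardinality bound (\ref{s14}) controlling how many optimal codepoints may cluster near any one piece. With these in hand, the proof is essentially assembly.

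First I would observe that for every $\alpha\in\mathcal{D}_{\phi_{k,r}}$,
\begin{equation*}
\int d(x,\alpha)^r d\mu(x)\geq\int_{G_{k,r}}d(x,\alpha)^r d\mu(x)=\mu(G_{k,r})\int d(x,\alpha)^r d\mu(\cdot|G_{k,r})(x),
\end{equation*}
which upon taking the infimum yields $e^r_{\phi_{k,r},r}(\mu)\geq\mu(G_{k,r})\,e^r_{\phi_{k,r},r}(\mu(\cdot|G_{k,r}))$. Next I would fix an optimal set $\alpha^{\ast}\in C_{\phi_{k,r},r}(\mu(\cdot|G_{k,r}))$, decompose $G_{k,r}$ into the pairwise disjoint pieces $A\in\mathcal{F}_{k,r}$ (disjointness is immediate from (\ref{s12})), and apply (\ref{s18}) piece by piece to obtain
\begin{equation*}
e^r_{\phi_{k,r},r}(\mu)\geq\sum_{A\in\mathcal{F}_{k,r}}\int_A d(x,\alpha^{\ast})^r d\mu(x)\geq B_{\widetilde{L}}\sum_{A\in\mathcal{F}_{k,r}}\mathcal{E}(A).
\end{equation*}

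To finish, I would insert the explicit values of $\mathcal{E}(A)$ from the energy-computation lemma, namely $\mathcal{E}(f_{\sigma\ast\tau^{(0)}}(C_{\rho\ast\rho^{(0)}}))=H_1\,p_\sigma s_\sigma^r t_\rho c_\rho^r$ for $\sigma\in\Psi_{k,r}$, $\rho\in\Gamma_{k,r}(\sigma)$, and $\mathcal{E}(f_{\sigma\ast\tau^{(0)}}(K))=H_3\,p_\sigma s_\sigma^r$ for $\sigma\in\Gamma_{k,r}$. This immediately identifies $\sum_{A\in\mathcal{F}_{k,r}}\mathcal{E}(A)$ with
\begin{equation*}
H_1\sum_{\sigma\in\Psi_{k,r}}\sum_{\rho\in\Gamma_{k,r}(\sigma)}p_\sigma s_\sigma^r t_\rho c_\rho^r+H_3\sum_{\sigma\in\Gamma_{k,r}}p_\sigma s_\sigma^r,
\end{equation*}
and the lemma follows with $d_5:=B_{\widetilde{L}}\min\{H_1,H_3\}$.

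The step demanding the most care is not the algebra but verifying that every constant propagated through the argument ($\delta$, $d_2$, $d_3$, $d_4$, $L$, $L_1$, $\widetilde{L}$, and hence $B_{\widetilde{L}}$) is genuinely independent of $k$; otherwise $d_5$ would deteriorate as $k\to\infty$ and the lemma would be useless for the upcoming quantization-coefficient bounds. Since the preceding lemmas were all formulated with this uniformity in mind, the only real work is to trace the chain of dependencies and confirm that none of them involve $k$.
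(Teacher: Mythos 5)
Your proposal is correct and follows essentially the same route as the paper: restrict the integral to $G_{k,r}$, pass to the conditional measure, decompose $G_{k,r}$ into the pairwise disjoint pieces of $\mathcal{F}_{k,r}$, and apply (\ref{s18}) together with the explicit energy formulas. Your choice $d_5:=B_{\widetilde{L}}\min\{H_1,H_3\}$ is in fact slightly more careful than the paper's, which drops the factor $p_0|C|^r$ on the first energy term in a way that would strictly require the $\min$; both constants are independent of $k$, which is the point that matters.
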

\begin{proof}
Let $\alpha\in C_{\phi_{k,r},r}(\mu)$ and $\beta\in C_{\phi_{k,r},r}(\mu(\cdot|G_{k,r}))$. Then we have
\begin{eqnarray*}
&&e_{\phi_{k,r},r}^r(\mu)=\int d(x,\alpha)^rd\mu(x)\geq\int_{G_{k,r}}d(x,\alpha)^rd\mu(x)\\&&\geq\int_{G_{k,r}} d(x,\beta)^rd\mu(x)=B_{\widetilde{L}}\sum_{A\in \mathcal{F}_{k,r}}\mathcal{E}(A)\;\;\;{\rm by}\;\;(\ref{s18})\\
&&\geq B_{\widetilde{L}}\sum_{\sigma\in\Psi_{k,r}}\sum_{\rho\in\Gamma_{k,r}(\sigma)}p_{\sigma \ast\tau^{(0)}}s_{\sigma\ast\tau^{(0)}}^rt_{\rho\ast\rho^{(0)}}c_{\rho\ast\rho^{(0)}}^r\\&&\;\;\;\;\;+B_{\widetilde{L}}\sum_{\sigma\in\Gamma_{k,r}}p_{\sigma \ast\tau^{(0)}}s_{\sigma\ast\tau^{(0)}}^r\\
&&=d_5\sum_{\sigma\in\Psi_{k,r}}\sum_{\rho\in\Gamma_{k,r}(\sigma)}p_\sigma s_\sigma^rt_\rho c_\rho^r+d_5\sum_{\sigma\in\Gamma_{k,r}}p_\sigma s_\sigma^r,
\end{eqnarray*}
where $d_5:=(p_{\tau^{(0)}}s_{\tau^{(0)}}^rt_{\rho^{(0)}}c_{\rho^{(0)}}^r)B_{\widetilde{L}}$.
This completes the proof of the lemma.
\end{proof}

To check the finiteness or positivity of the upper and lower quantization coefficient, we need one more auxiliary lemma. We write
\begin{equation*}
a(s):=\sum_{i=1}^M(t_ic_i^r)^{\frac{s}{s+r}},\;\;b(s):=\sum_{i=1}^N(p_is_i^r)^{\frac{s}{s+r}},\;\;s>0.
\end{equation*}
Then $a(s_r)=b(t_r)=1$. For $s>0$ and $k\geq 1$, we set
\begin{eqnarray*}
I_k(s):=\sum_{\sigma\in\Psi_{k,r}}\sum_{\rho\in\Gamma_{k,r}(\sigma)}(p_\sigma s_\sigma^r t_\rho c_\rho^r)^{\frac{s}{s+r}}+\sum_{\sigma\in\Gamma_{k,r}}(p_\sigma s_\sigma^r)^{\frac{s}{s+r}}.
\end{eqnarray*}
\begin{lemma}\label{g6}
Let $I_k(s)$ be as defined above. We have
\begin{eqnarray*}
I_k(s_r)\left\{\begin{array}{ll}<(1-b(s_r))^{-1}+1,\;\;\;\;\;\;{\rm if}\;\;s_r>t_r\\
\geq l_{1,k}\;\;\;\;\;\;\;\;\;\;\;\;\;\;\;\;\;\;\;\;\;\;\;\;\;\;\;\;\;{\rm if}\;\;s_r=t_r\end{array}\right..
\end{eqnarray*}
\end{lemma}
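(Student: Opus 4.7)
The plan is to first collapse $I_k(s_r)$ to a sum over a single prefix tree, then handle the two cases by elementary counting.

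For each $\sigma\in\Psi_{k,r}$, the set $\Gamma_{k,r}(\sigma)$ is a finite maximal antichain in $\Phi^*$ (for any $\omega\in\Phi^{\mathbb{N}}$ the products $p_\sigma s_\sigma^r t_{\omega|_j}c_{\omega|_j}^r$ decrease strictly to $0$, so some prefix eventually lands in $\Gamma_{k,r}(\sigma)$ by (\ref{s11})). Since $a(s_r)=1$, a standard antichain induction---replace any $\rho$ in the antichain by its $M$ one-step descendants and use $a(s_r)=1$---gives $\sum_{\rho\in\Gamma_{k,r}(\sigma)}(t_\rho c_\rho^r)^{s_r/(s_r+r)}=1$. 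Writing $\beta:=s_r/(s_r+r)$ and factoring $(p_\sigma s_\sigma^r)^\beta$ out, the definition of $I_k(s_r)$ simplifies to $\sum_{\sigma\in\Psi_{k,r}\cup\Gamma_{k,r}}(p_\sigma s_\sigma^r)^\beta$. I would then verify the combinatorial identity
\[
\Psi_{k,r}\cup\Gamma_{k,r}=T_{k,r}:=\{\tau\in\Omega^*:\tau\prec\gamma\text{ for some }\gamma\in\Gamma_{k,r}\},
\]
namely the full set of prefixes of $\Gamma_{k,r}$-elements.

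In the case $s_r=t_r$, $b(s_r)=b(t_r)=1$, so $\sum_{\sigma\in\Omega_h}(p_\sigma s_\sigma^r)^\beta=b(s_r)^h=1$ for every $h$. Since $|\gamma|\geq l_{1,k}$ for every $\gamma\in\Gamma_{k,r}$ and $\Gamma_{k,r}$ is a maximal antichain, $\Omega_h\subseteq T_{k,r}$ for $0\leq h\leq l_{1,k}-1$, and summing over these levels yields $I_k(s_r)\geq l_{1,k}$. In the case $s_r>t_r$, strict monotonicity of $s\mapsto(p_is_i^r)^{s/(s+r)}$ (each base lies in $(0,1)$) gives $b(s_r)<b(t_r)=1$, after which I would run a downward tree induction on $T_{k,r}$. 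Define $F(\tau):=(p_\tau s_\tau^r)^{-\beta}\sum_{\omega\in T_{k,r},\tau\prec\omega}(p_\omega s_\omega^r)^\beta$; then $F\equiv 1$ on the leaves $\Gamma_{k,r}$ and $F(\tau)=1+\sum_{i:\tau\ast i\in T_{k,r}}(p_is_i^r)^\beta F(\tau\ast i)$ at internal nodes, so $F^*:=\max_\tau F(\tau)$ obeys $F^*\leq 1+b(s_r)F^*$, i.e.\ $F^*\leq(1-b(s_r))^{-1}$. Evaluating at $\tau=\theta$ gives $I_k(s_r)=F(\theta)\leq(1-b(s_r))^{-1}<(1-b(s_r))^{-1}+1$.

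The delicate step is the tree identification $\Psi_{k,r}\cup\Gamma_{k,r}=T_{k,r}$: one must reconcile the three-piece definition (\ref{s8}) of $\Psi_{k,r}$ (short words $\bigcup_{h<l_{1,k}}\Omega_h$ together with $\Lambda^*_{\Gamma_{k,r}}$) against the clean prefix description of $T_{k,r}$, using maximality of $\Gamma_{k,r}$ to absorb the short words and the definition of $\Gamma^*(\tau)$ to place every proper ancestor of length $\geq l_{1,k}$ in some $\Lambda_{\Gamma_{k,r}}(\sigma)$. Once this identification is in hand, both estimates follow from the algebraic identities $a(s_r)=1$, $b(t_r)=1$ and the monotonicity of $b$.
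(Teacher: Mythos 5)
Your proof is correct, and the upper-bound half takes a genuinely different route from the paper.

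The paper's proof of $I_k(s_r)<(1-b(s_r))^{-1}+1$ when $s_r>t_r$ works level by level: after factoring out $(p_\sigma s_\sigma^r)^\beta$ and collapsing the inner sums via $a(s_r)=1$, it bounds the contribution of each level $\Omega_h$ (restricted to $\Psi_{k,r}$ or $\Lambda_{\Gamma_{k,r}}^*$) by the full-level sum $b(s_r)^h$, bounds $\sum_{\sigma\in\Gamma_{k,r}}(p_\sigma s_\sigma^r)^\beta\leq b(s_r)^{l_{1,k}}$ by appealing to Lemma~2.5 of \cite{Zhu:12}, and sums the resulting geometric series. You instead identify $\Psi_{k,r}\cup\Gamma_{k,r}$ as the full prefix tree $T_{k,r}$ of $\Gamma_{k,r}$ (the set identity is exactly right, and the three pieces of $\Psi_{k,r}$ plus $\Gamma_{k,r}$ are pairwise disjoint, so the sums add without overcounting), and then run a self-contained tree argument with $F(\tau)$. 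This replaces the external citation to \cite{Zhu:12} with a two-line recursion; the key points you use implicitly---that for $\tau\in T_{k,r}\setminus\Gamma_{k,r}$ all $N$ children lie in $T_{k,r}$ by maximality of $\Gamma_{k,r}$, and that $T_{k,r}$ is finite so $F^*$ exists---both hold. Your bound $I_k(s_r)=F(\theta)\leq(1-b(s_r))^{-1}$ is actually strictly sharper than the stated $(1-b(s_r))^{-1}+1$, which of course still gives the lemma. The $s_r=t_r$ case ($\Omega_h\subset T_{k,r}$ for $h<l_{1,k}$ and $b(s_r)^h=1$, so the first $l_{1,k}$ levels already contribute $l_{1,k}$) matches the paper's computation exactly. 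In short: same factoring step, same handling of the equality case, but a cleaner and self-contained argument in the strict-inequality case that buys a marginally better constant and avoids the appeal to \cite{Zhu:12}.
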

\begin{proof}
For every $\sigma\in\Psi_{k,r}$, $\Gamma_{k,r}(\sigma)$ is a finite maximal antichain in $\Phi^*$. Note that $a(s_r)=1$. Hence, we have
$\sum_{\rho\in\Gamma_{k,r}(\sigma)}(t_\rho c_\rho^r)^{\frac{s_r}{s_r+r}}=1$. It follows that
\begin{eqnarray*}
\sum_{\rho\in\Gamma_{k,r}(\sigma)}(p_\sigma s_\sigma^r t_\rho c_\rho^r)^{\frac{s_r}{s_r+r}}=
(p_\sigma s_\sigma^r)^{\frac{s_r}{s_r+r}}\sum_{\rho\in\Gamma_{k,r}(\sigma)}(t_\rho c_\rho^r)^{\frac{s_r}{s_r+r}}=
(p_\sigma s_\sigma^r)^{\frac{s_r}{s_r+r}}.
\end{eqnarray*}
Assume that $s_r>t_r$.Then we have that $b(s_r)<1$. Using this, we deduce (cf. Lemma 2.5 of \cite{Zhu:12})
\begin{equation*}
\sum_{\sigma\in\Gamma_{k,r}}(p_\sigma s_\sigma^r)^{\frac{s_r}{s_r+r}}\leq\sum_{\sigma\in\Omega_{l_{1,k}}}(p_\sigma s_\sigma^r)^{\frac{s_r}{s_r+r}}=b(s_r)^{l_{1,k}}<1.
\end{equation*}
On the other hand, one can easily see
\begin{eqnarray*}
\sum_{\sigma\in\Omega_h}(p_\sigma s_\sigma^r)^{\frac{s_r}{s_r+r}}=b(s_r)^h,\;\;\Lambda_{\Gamma_{k,r}}^*\subset\bigcup_{h=l_{1,k}}^{l_{2,k}-1}\Omega_h.
\end{eqnarray*}
Combining the above analysis, we deduce
\begin{eqnarray*}
I_k(s_r)&=&\sum_{h=0}^{l_{1,k}-1}\sum_{\sigma\in\Omega_h}(p_\sigma s_\sigma^r)^{\frac{s_r}{s_r+r}}+\sum_{\sigma\in\Gamma_{k,r}}(p_\sigma s_\sigma^r)^{\frac{s_r}{s_r+r}}\\&&\;\;\;+\sum_{\sigma\in\Lambda_{\Gamma_{k,r}}^*}(p_\sigma s_\sigma^r)^{\frac{s_r}{s_r+r}}\nonumber\\
&\leq&\sum_{h=0}^{l_{1,k}-1}b(s_r)^h+b(s_r)^{l_{1,k}}+\sum_{h=l_{1,k}}^{l_{2,k}-1}b(s_r)^h\nonumber\\
&\leq&\sum_{h=0}^{l_{2,k}}b(s_r)^h+b(s_r)^{l_{1,k}}<\frac{1}{1-b(s_r)}+1.
\end{eqnarray*}
Now we assume that $s_r=t_r$. Then $a(s_r)=b(s_r)=1$. Note that, for every $\sigma\in\Psi_{k,r}$, we have, $\sum_{\rho\in\Gamma_{k,r}(\sigma)}(t_\rho c_\rho^r)^{\frac{s_r}{s_r+r}}=1$. Using this, we deduce
\begin{eqnarray}
I_k(s_r)&\geq&\sum_{h=0}^{l_{1,k}-1}\sum_{\sigma\in\Omega_h}\sum_{\rho\in\Gamma_{k,r}(\sigma)}(p_\sigma s_\sigma^r t_\rho c_\rho^r)^{\frac{s_r}{s_r+r}}\nonumber\\&=&\sum_{h=0}^{l_{1,k}-1}\sum_{\sigma\in\Omega_h}(p_\sigma s_\sigma^r)^{\frac{s_r}{s_r+r}}\sum_{\rho\in\Gamma_{k,r}(\sigma)}(t_\rho c_\rho^r)^{\frac{s_r}{s_r+r}}\nonumber\\&=&\sum_{h=0}^{l_{1,k}-1}\sum_{\sigma\in\Omega_h}(p_\sigma s_\sigma^r)^{\frac{s_r}{s_r+r}}=l_{1,k}.\label{s33}
\end{eqnarray}
This completes the proof of the lemma.
\end{proof}

\emph{Proof of Theorem \ref{mthm1}}

Recall that $\xi_r=\max\{s_r,t_r\}$.
If $\xi_r=s_r$, then $\sum_{\rho\in\Gamma_{k,r}(\theta)}(t_\rho c_\rho^r)^{\frac{\xi_r}{\xi_r+r}}=1$ since $\Gamma_{k,r}(\theta)$ is a finite maximal antichain in $\Phi^*$. By Lemma \ref{g4} and H\"{o}lder's inequality,
\begin{eqnarray*}
&&\phi_{k,r}^{\frac{r}{\xi_r}}e_{\phi_{k,r},r}^r(\mu)\geq d_5\phi_{k,r}^{\frac{r}{\xi_r}}\sum_{\rho\in\Gamma_{k,r}(\theta)}t_\rho c_\rho^r\\&&\geq d_5\phi_{k,r}^{\frac{r}{\xi_r}}\bigg(\sum_{\rho\in\Gamma_{k,r}(\theta)}(t_\rho c_\rho^r)^{\frac{\xi_r}{\xi_r+r}}\bigg)^{\frac{\xi_r+r}{\xi_r}}M_{k,r}^{-\frac{r}{\xi_r}}(\theta)\\&&
=d_5\phi_{k,r}^{\frac{r}{\xi_r}}M_{k,r}(\theta)^{-\frac{r}{\xi_r}}\geq d_5>0.
\end{eqnarray*}
If $\xi_r=t_r$, then $\sum_{\sigma\in\Gamma_{k,r}}(p_\sigma c_\sigma^r)^{\frac{\xi_r}{\xi_r+r}}=1$. By Lemma \ref{g4} and H\"{o}lder's inequality,
\begin{eqnarray*}
&&\phi_{k,r}^{\frac{r}{\xi_r}}e_{\phi_{k,r},r}^r(\mu)\geq d_5\phi_{k,r}^{\frac{r}{\xi_r}}\sum_{\rho\in\Gamma_{k,r}}p_\sigma s_\sigma^r \\&&\geq d_5\phi_{k,r}^{\frac{r}{\xi_r}}\bigg(\sum_{\sigma\in\Gamma_{k,r}}(p_\sigma c_\sigma^r)^{\frac{\xi_r}{\xi_r+r}}\bigg)^{\frac{\xi_r+r}{\xi_r}}N_{k,r}^{-\frac{r}{\xi_r}}\\&&
=d_5\phi_{k,r}^{\frac{r}{\xi_r}}N_{k,r}^{-\frac{r}{\xi_r}}\geq d_5>0.
\end{eqnarray*}
By Lemma \ref{g5}, we know that $\phi_{k,r}\leq \phi_{k+1,r}\leq d_1\phi_{k,r}$. Hence, by Lemma 2.4 (b3) of \cite{Zhu:12}, we deduce that $\underline{Q}_r^{\xi_r}(\mu)>0$. As a consequence, we have, $\underline{D}_r(\mu)\geq\xi_r$.

Now let $s>\xi_r$. Then $a(s),b(s)<1$. As we did in Lemma \ref{g6}, one can see
\begin{eqnarray*}
I_k(s)&=&\sum_{\sigma\in\Psi_{k,r}}\sum_{\rho\in\Gamma_{k,r}(\sigma)}(p_\sigma s_\sigma^r t_\rho c_\rho^r)^{\frac{s}{s+r}}+\sum_{\sigma\in\Gamma_{k,r}}(p_\sigma s_\sigma^r)^{\frac{s}{s+r}}\\&<&(1-b(s))^{-1}+1=:c(s).
\end{eqnarray*}
It follows that $\phi_{k,r}\leq c(s)\underline{\eta}_r^{-\frac{(k+1)s}{s+r}}$. Thus, by Lemma \ref{g7},
\begin{eqnarray}\label{s16}
\phi_{k,r}^{\frac{r}{s}}e^r_{\phi_{k,r},r}(\mu)&\leq& \phi_{k,r}^{\frac{r}{s}}\bigg(p_0|C|^r\sum_{\sigma\in\Psi_{k,r}}\sum_{\rho\in\Gamma_{k,r}(\sigma)}p_\sigma s_\sigma^r t_\rho c_\rho^r+\sum_{\sigma\in\Gamma_{k,r}}p_\sigma s_\sigma^r\bigg)\nonumber\\&\leq&p_0|C|^r\phi_{k,r}^{\frac{r}{s}}\phi_{k,r}\underline{\eta}_r^k\leq
p_0|C|^r c(s)^{\frac{s+r}{s}}\underline{\eta}_r^{-1}.
\end{eqnarray}
This implies $\underline{Q}_r^s(\mu)<\infty$ by Lemma 2.4 (b3) of \cite{Zhu:12}. In particular, we have $\overline{D}_r(\mu)\leq s$. By the arbitrariness of $s$, we conclude that $\overline{D}_r(\mu)\leq\xi_r$. This, together with the analysis in the preceding paragraph, yields that $D_r(\mu)=\xi_r$.

Assume that $s_r>t_r$. Using Lemma \ref{g6} and a similar argument as above, one can easily show that $\underline{Q}_r^{\xi_r}(\mu)<\infty$.
If $s_r=t_r$, then by Lemmas \ref{g4}, \ref{g6} and H\"{o}lder's inequality with exponent less than one, we have
\begin{eqnarray*}
&&\phi_{k,r}^{\frac{r}{\xi_r}}e_{\phi_{k,r},r}^r(\mu)\geq d_5\phi_{k,r}^{\frac{r}{\xi_r}}\sum_{\sigma\in\Psi_{k,r}}\sum_{\rho\in\Gamma_{k,r}(\sigma)}p_\sigma s_\sigma^r t_\omega c_\omega^r\\&&\geq d_5\phi_{k,r}^{\frac{r}{\xi_r}}\bigg(\sum_{\sigma\in\Psi_{k,r}}\sum_{\rho\in\Gamma_{k,r}(\sigma)}(p_\sigma s_\sigma^r t_\omega c_\omega^r)^{\frac{\xi_r}{\xi_r+r}}\bigg)^{\frac{\xi_r+r}{\xi_r}}\times\\&&\;\;\;\;\times\bigg(\sum_{\sigma\in\Psi_{k,r}}M_{k,r}(\sigma)\bigg)^{-\frac{r}{\xi_r}}
\geq d_5l_{1,k}^{\frac{\xi_r+r}{\xi_r}}\;\;{\rm by}\;\;(\ref{s33}).
\end{eqnarray*}
It follows by Lemma 2.4 (b3) of \cite{Zhu:12}, that $\underline{Q}_r^{\xi_r}(\mu)=\overline{Q}_r^{\xi_r}(\mu)=\infty$. This completes the proof of the theorem.

\begin{example}{\rm
Let $f_i,i=1,2$ and $g_i,i=1,2$ be as defined below:
\begin{eqnarray*}
f_1(x):=\frac{1}{4}x,\;f_2(x):=\frac{1}{4}x+\frac{3}{4};\\g_1(x):=\frac{1}{8}x+\frac{1}{3},\;g_2(x):=\frac{1}{8}x+\frac{13}{24}.
\end{eqnarray*}
Let $\nu$ be the self-similar measure associated with $(g_1,g_2)$ and a probability vector $(t_1,t_2)$.
Note that $g_i([\frac{1}{3},\frac{2}{3}])\subset[\frac{1}{3},\frac{2}{3}]$ for $i=1,2$. Thus, $C={\rm supp}(\nu)\subset[\frac{1}{3},\frac{2}{3}]$.
Let $(p_i)_{i=0}^2$ be a probability vector with $p_i>0$ for all $i=0,1,2$. Then Theorem \ref{mthm1} holds for the corresponding ISM $\mu$. To see this, it suffices to check the IOSC. Let $U:=(0,1)$. Then we have
\begin{eqnarray*}
&&f_i(U)\subset U,\;g_i(U)\subset U,i=1,2;\\&&f_1(U)\cap f_2(U)=\emptyset,\;g_1(U)\cap g_2(U)=\emptyset.
\end{eqnarray*}
This shows that both $(f_1,f_2)$ and $(g_1,g_2)$ satisfy the OSC and (A1)-(A3) are satisfied. We clearly have that $\nu(\partial(U))=0$. For $i=1,2$, we have,
\begin{equation*}
f_i({\rm cl}(U))\cap C\subset f_i({\rm cl}(U))\cap \big[\frac{1}{3},\frac{2}{3}\big]=\emptyset.
\end{equation*}
Hence, (A4) is satisfied and Theorem \ref{mthm1} holds for $\mu$.

}\end{example}

\end{document}